\theoremstyle{plain}
\newtheorem{theorem}{Theorem}[section]
\newtheorem{cor}[theorem]{Corollary}
\newtheorem{lem}[theorem]{Lemma}
\newtheorem{prop}[theorem]{Proposition}
\theoremstyle{definition}
\newtheorem{examps}[theorem]{Examples}
\newtheorem{dfn}[theorem]{Definition}
\newtheorem{rem}[theorem]{Remark}
\theoremstyle{remark}
\newcommand{\RNum}[1]{\uppercase\expandafter{\romannumeral #1\relax}}
\providecommand*{\twoheadrightarrowfill@}{%
	\arrowfill@\relbar\relbar\twoheadrightarrow
}
\providecommand*{\twoheadleftarrowfill@}{%
	\arrowfill@\twoheadleftarrow\relbar\relbar
}
\providecommand*{\xtwoheadrightarrow}[2][]{%
	\ext@arrow 0579\twoheadrightarrowfill@{#1}{#2}%
}
\providecommand*{\xtwoheadleftarrow}[2][]{%
	\ext@arrow 5097\twoheadleftarrowfill@{#1}{#2}%
}
\newcommand\norm[1]{\left\lVert#1\right\rVert}
\def\ind{\@ifnextchar[{\@with}{\@without}}
\def\@with[#1]#2{\mathrm{Ind}(#1,#2)}
\def\@without#1{\mathrm{Ind}(#1)}
\newcounter{para}[section]
\newcommand\KK[0]{K\! K}
\DeclareMathOperator\End{End}
\DeclareMathOperator\Ind{Ind}
\DeclareMathOperator\Aut{Aut}
\DeclareMathOperator\Ad{Ad}
\DeclareMathOperator\ad{ad}
\DeclareMathOperator\tr{Tr}
\DeclareMathOperator\ch{Ch}
\DeclareMathOperator\ev{ev}
\DeclareMathOperator\dnc{DNC}
\DeclareMathOperator\Hom{Hom}
\DeclareMathOperator\dom{Dom}
\newcommand{\R}{\mathbb{R}}
\newcommand{\N}{\mathbb{N}}
\newcommand{\C}{\mathbb{C}}
\newcommand{\Z}{\mathbb{Z}}
\begin{document}
\title{Index theorem for inhomogeneous hypoelliptic differential operators}
\author{Omar Mohsen}
\date{}
\maketitle
\abstract{We prove an index theorem for inhomogeneous differential operators satisfying the Rockland condition (hence hypoelliptic). This theorem extends an index theorem for contact manifolds by Van-Erp \cite{MR2680395,MR2680396}.}
\setlength{\parskip}{4pt}
\section*{Introduction}

Let $M$ be a smooth manifold, $H\subseteq TM$ a subbundle. One can define a pseudo-differential calculus where a vector field $X$ has order $1$ if $X\in \Gamma(H)$ and $2$ otherwise. This calculus was first defined in the special case where $X$ is a CR manifold and $H$ the contact structure by Folland and Stein \cite{MR0344699,MR0367477}, in attempts to create a pseudo-differential calculus which contains a parametrix to the Kohn Laplacian and the Laplace operator that appears in Hormander's sum of squares theorem \cite{MR0222474}. Subsequent development by \cite{MR0493005,MR0370271,MR660648,MR973272,MR1185817,MR764508,
MR0423432,MR953082,MR0442149,MR0436223,Erp:2015aa,Dave:aa,MR2417549,MelinUnpubl,MR739894} \footnote{This list is certinely not exahustive.} of this calculus lead to the definition in the general case where $H$ is an arbitrary subbundle. Even more generally if one is given an increasing filtration by subbundles of the tangent bundle $$0=H^0\subseteq H^1\subseteq H^2\cdots\subseteq H^{r-1}\subseteq H^{r}=TM$$ which satisfy the condition $$[\Gamma(H^i),\Gamma(H^j)]\subseteq \Gamma(H^{i+j})\quad\forall i,j,$$then one can define a pseudo-differential calculus where a vector field $X$ has order $i$ if it belongs to $\Gamma(H^i).$

If $x\in M$, then one can define (using the Baker-Campbell-Hausdorff formula) a graded nilpotent Lie group structure on $$\oplus_{i=1}^{r}H^i_{x}/H^{i-1}_x$$ where the Lie bracket is defined by $$[X(x),Y(x)]:=[X,Y](x)\mod H^{i+j-1}_x,\; X\in \Gamma(H^i),Y\in \Gamma(H^j).$$ In the previous definition one can show that $[X,Y](x)\mod H^{i+j-1}_x$ only depends on $X(x)\mod H^{i-1}_x$ and $Y(x)\mod H^{j-1}_x$. Hence the Lie bracket is well defined.

The principal symbol of a differential operator on $M$ in the above calculus at point $x$ is an equivariant differential operator on the Lie group $\oplus_{i=1}^{r}H^i_{x}/H^{i-1}_x$. More generally for pseudo-differential operators, the principal symbol is an (unbounded) multiplier of the $C^*$-algebra $C^*\oplus_{i=1}^{r}H^i_{x}/H^{i-1}_x$.\footnote{For pseudo-differential operators of negative order, the symbol is only a bounded multiplier of $\ker(1_G)$, where $1_G:C^*\oplus_{i=1}^{r}H^i_{x}/H^{i-1}_x\to \C$ is the trivial representation.}

Let us mention two difficulties that appear when one tries to develop this calculus \begin{enumerate}
\item The structure of the groups $\oplus_{i=1}^{r}H^i_{x}/H^{i-1}_x$ is not locally constant in $x$.
\item Even if the structure of the groups is locally constant, the equivalent of Darboux's theorem (the case of $H$ is a contact structure) is not true in general. See \cite{MR1509120}.
\end{enumerate}
If the principal symbol is invertible at everypoint in the above calculus, then the operator, as well as its adjoint, are hypoelliptic. Such an operator will be called $H$-elliptic. A simple criteria equivalent to invertability of the principal symbol was conjectured by Rockland. This was proved in \cite{MR537467,MR499352,MR739894}. \begin{theorem}[Rockland condition]
The principal symbol $\sigma_x(D)$ of a differential operator $D$ at $x\in M$ is left invertible if for every unitary non trivial representation $\pi$ of the group $\oplus_{i=1}^{r}H^i_{x}/H^{i-1}_x$, the (unbounded) operator $\pi(\sigma_x(D))$ is injective.
\end{theorem}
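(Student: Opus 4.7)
The plan is to reduce the statement to the invertibility of the positive multiplier $P := \sigma_x(D)^*\sigma_x(D)$ of $C^*(G)$, where $G:=\oplus_{i=1}^{r}H^i_x/H^{i-1}_x$, and then to exploit the homogeneity of $P$ under the canonical dilations of $G$. First, $\sigma_x(D)$ is left invertible if and only if $P$ is invertible, and the principal symbol of $D^*D$ is precisely $P$; applying the Rockland hypothesis to $D^*D$ yields that $\pi(P)$ is an injective positive self-adjoint operator for every nontrivial irreducible unitary representation $\pi$ of $G$, and hence admits a densely-defined (unbounded) inverse.

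Second, the grading of $\mathfrak{g}:=\oplus_{i=1}^{r}H^i_x/H^{i-1}_x$ provides a one-parameter group of dilating automorphisms $(\delta_t)_{t>0}$ on $G$, under which $\sigma_x(D)$ is homogeneous of the order $m$ of $D$. Hence $\delta_t(P)=t^{2m}P$, giving $(\pi\circ\delta_t)(P)=t^{2m}\pi(P)$, so the question of uniform invertibility of the family $\{\pi(P)\}_{\pi\neq 1_G}$ reduces to a uniform lower bound on the quotient of $\hat{G}\setminus\{1_G\}$ by the induced dilation action on the unitary dual.

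Third, via the Kirillov orbit method one identifies $\hat{G}$ with the space of coadjoint orbits in $\mathfrak{g}^*$; the quotient of nontrivial coadjoint orbits by the dilation action is compact in the Jacobson topology on $\mathrm{Prim}\,C^*(G)$, since every dilation orbit meets any fixed ``unit sphere'' in $\mathfrak{g}^*$ in a compact set. Lower semicontinuity of $\pi\mapsto\inf\,\mathrm{spec}(\pi(P))$, combined with Rockland at every point, then produces a uniform lower bound $c>0$; homogeneity propagates it to all of $\hat{G}\setminus\{1_G\}$, and the Plancherel decomposition of $C^*(G)$ assembles the pointwise inverses into an element inverting $P$ in the multiplier algebra of $\ker(1_G)$.

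The main obstacle is the third step. The unitary dual of a nilpotent Lie group is not Hausdorff in general, so the lower semicontinuity of the spectrum map needs careful handling, and a direct compactness argument on $\hat{G}$ is delicate. The classical proofs (Rothschild--Stein, Helffer--Nourrigat) in fact circumvent this abstract continuity argument and instead proceed through explicit estimates in the homogeneous pseudo-differential calculus on $G$, combined with a lifting argument to free nilpotent groups of the appropriate step. These analytic estimates form the technical core of Rockland's theorem.
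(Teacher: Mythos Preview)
The paper does not give its own proof of this theorem: it is stated in the introduction as a known result and attributed to \cite{MR537467,MR499352,MR739894}. The closest the paper comes to touching the argument is in the appendix (\cref{Parametrix symbols}), where the \emph{bundle} version of parametrix existence is reduced, via the lifting lemma \cref{lift Rock}, to the constant-bundle case and then to \cite{MR1185817}. So there is no in-paper proof to compare your sketch against; the paper simply imports the result.

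That said, your outline is a coherent plan and you are honest about where it breaks. Steps 1 and 2 are correct and standard: reducing to the positive homogeneous operator $P=\sigma_x(D)^*\sigma_x(D)$ and using the dilation action to reduce to a ``unit sphere'' in $\hat G\setminus\{1_G\}$ is exactly how one organises the problem. Your step 3 is the genuine gap, and you identify it correctly. The map $\pi\mapsto\inf\spec\,\pi(P)$ is \emph{not} known to be lower semicontinuous on the primitive ideal space in any usable sense without already having the hypoelliptic estimates in hand; the non-Hausdorff nature of $\hat G$ means a naive compactness-plus-semicontinuity argument does not close. The proofs in the cited literature (Helffer--Nourrigat, and later Christ--Geller--G{\l}owacki--Polin) do not run this abstract route: they construct the parametrix directly in the homogeneous symbol calculus, using induction on the nilpotency step and the Rothschild--Stein lifting to free nilpotent groups --- which is precisely the device the paper invokes in \cref{lift Rock}. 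Your final paragraph already says this; so your ``proof sketch'' is really a correct diagnosis of why the theorem is hard, together with a pointer to where the actual work is done, rather than an alternative proof.
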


 It follows from hypoellipticity of $D$ and of its formal adjoint $D^t$ and Rellich–Kondrachov theorem that on a compact manifold, an $H$-elliptic operator $D$ is Fredholm. It is quite natural to try to find a topological formula for the analytic index similar to the Atiyah-Singer index formula.

 In the case of contact manifolds, this problem was solved by Van-Erp \cite{MR2680395,MR2680396}. His proof is based on adapting a proof of the Atiyah-Singer index theorem by A. Connes \cite{MR1303779} using tangent groupoids. His solution was published in a $2$-part paper. In part 1, he constructs a deformation groupoid like the tangent groupoid which captures\footnote{See \cite{MR3407514,Erp:2015aa} for more details on the relation between deformation groupoids and pseudo-differential calculus.} the above calculus. Then he proves that solving the index theorem amounts to inverting the Connes-Thom isomorphism for the Heisenberg group. In part 2, he inverts this isomorphism for scalar valued operators. Together with P. Baum \cite{MR3261009}, they improved the formula and the computations done in part 2, so that it applies to the operators with coefficients in a vector bundle.
 
In this article we extend Van-Erp's formula to the general case of an arbitrary filtration. The first step is the construction of the deformation groupoid for the general case of an arbitrary filtration. This was first constructed in \cite{Erp:2016aa,Choi:2015aa}. An elementary construction was given by the author in \cite{Mohsen:2018aa}. The construction given in \cite{Mohsen:2018aa}, shows that solving the index theorem amounts to inverting the Connes-Thom isomorphism like in the contact case.

  Recall that for a bundle of nilpotent Lie groups $G\to M$ on a compact manifold, the Connes-Thom isomorphism (denoted by $\Ind_G$) is an isomorphism $$K(C_0(\mathfrak{g}^*))\xrightarrow{\Ind_G} K(C^*G).$$The principal symbol of an $H$-elliptic operator $D$ defines naturally an element $$[\sigma (D)]\in K(C^*\oplus_{i=1}^{r}H^i/H^{i-1}).$$
  It is shown in \cref{Sec Carnot diff calc}, that $$\dim(\ker(D))-\dim(\ker(D^t))=\Ind_{AS}\left(\Ind^{-1}_G\left([\sigma(D)]\right)\right),$$ where $\Ind_{AS}$ the Atiyah-Singer index map.

A crucial part in Van-Erp's formula is that if $(M,H)$ is a contact manifold with $H$ cooriented, then $H_x\oplus T_xM/H_x=H_x\oplus \R$ is the Heisenberg group for every $x\in M$. Furthermore there is no topological obstruction to finding a compatible almost complex structure. The Bargmann-Fock representation constructed from an almost complex structure gives a continuous field of irreducible representations of the groups $H_x\oplus \R$ for $x\in M$.

In the general case of an arbitrary filtration, it is not possible to construct a continuous field of infinite dimensional irreducible representations because the group bundle is not locally constant. We propose the following solution; let $G_x$ be the group $\oplus_{i=1}^{r}H^i_{x}/H^{i-1}_x$. We can embed the group $G_x$ inside a group $\bar{G}_x$ which lies in a central exact sequence $$0\to \R\to \bar{G}_x\to \mathfrak{g}_x^* \rtimes_{\Ad^*}G_x\to 0.$$
By the construction of $\bar{G}_x$, the left regular representation of $G_x$ extends to an irreducible representation of $\bar{G}_x.$ Also the groups $\bar{G}_x$ form a bundle $\bar{G}=\sqcup_{x\in M}\bar{G}_x$ of graded nilpotent Lie groups on $M$. 

Then starting from an $H$-elliptic operator $D$, we construct a symbol on $\bar{G}$ which satisfies the Rockland condition, and whose class in $K$-theory is equal to the image of $[\sigma(D)]$ under the natural map $$K(C^*G)\to K(C^*\bar{G})$$ which is equal to the composition $$K(C^*G)\xrightarrow{\Ind_G^{-1}} K(C_0(\mathfrak{g}^*))\xrightarrow{\mathrm{Thom}}K(C_0(\bar{\mathfrak{g}}^*))\xrightarrow{\Ind_{\bar{G}}}K(C^*\bar{G})$$ where $\mathrm{Thom}$ is the topological Thom-isomorphism, $\bar{\mathfrak{g}}$ is the Lie algebra of $\bar{G}.$

Let us remark that a cruical part in our construction is that we don't compute the image of $[\sigma(D)]$ in $K(C^*\bar{G})$ by inverting the Connes-Thom isomorphism. Instead, since $\bar{G}$ is equal to $(\mathfrak{g}^*\oplus \R)\rtimes G$, we can guess the image by writing the formula as if $\bar{G}$ was equal to $(\mathfrak{g}^*\oplus \R)\times G$. Then in \cref{main thm on ind crossed product}, we show that it is in fact the image  by creating a homotopy between $\bar{G}$ and the direct product $(\mathfrak{g}^*\oplus \R)\times G$.

The group bundle $\bar{G}$ shares the following property with the Heisenberg group : its $C^*$-algebra $C^*\bar{G}$ lies in an exact sequence \begin{equation}\label{eqn intro exact}
 0\to C_0(\R^*)\otimes \mathcal{K}(L^2G)\to C^*\bar{G}\to C^*\left(\mathfrak{g}^*\rtimes_{\mathrm{Ad^*}} G\right)\to 0.
\end{equation} A similar exact sequence holds for bundles of Heisenberg groups. In \cite{MR2680396}, Van-Erp uses the above exact sequence for the Heisenberg group together with a trick to invert the Connes-Thom isomorpphism. In fact one easily shows that in the $6$-term exact sequence the map $$K(C_0(\R^*)\otimes \mathcal{K}(L^2G))\to K(C^*\bar{G})$$ is onto. Van-Erp's trick is to use a natural anti group automorphism of the Heisenberg group to create a per-image. This trick doesn't work in general for $\bar{G}$ because the anti group automorphism that Van-Erp uses is no longer an anti group automorphism for $\bar{G}$. Nevertheless a variant of his trick by instead using the group automorphism $(\xi,t)\to (\xi,-t)$ on the $\mathfrak{g}^*\times \R$ part of $\bar{G}$ does work for the image of $[\sigma(D)]$ in $K(C^*\bar{G})$. A topological formula is then obtained in \cref{index formula thm}.

The article is organised as follows 
\begin{enumerate}
\item In \Cref{Rockl sect}, we recall the Rockland condition for differential operators on Carnot groups. We then give some examples.
\item In \Cref{Connes-Thom sect}, we recall the construction of Connes-Thom isomorphism. We then give the construction of push-forward of differential operators satisfying the Rockland condition.
\item In \Cref{sect Index thm}, we invert the Connes-Thom isomorphism and obtain an index theorem for inhomogeneous differential operators (at the level of symbols).
\item In \Cref{Sec Carnot diff calc}, we recall some facts on the inhomogeneous pseudo-differential calculus on a smooth manifold. In this section we prove that inverting Connes-Thom isomorphism gives the index theorem. Which together with the index theorem in \cref{sect Index thm} gives the desired index formula.
\item In the appendix \ref{Sect Symbols}, we added all the analytic results needed. We recall the definition of symbols of pseudo-differential operators. This is needed to justify some results on the inhomogeneous differential operators.
\end{enumerate}
\section*{Acknowledgement}
I would like to thank Echterhoff and Skandalis for their suggestions. This work was done while the author was a postdoc supported by the excellence cluster in university of Münster.
\section*{Notation}
\begin{itemize}
 \item We use $\N=\{1,2,\cdots,\},$ $\N_0=\N\cup\{0\}.$
 \item  If $V$ is a vector bundle, then we will denote by $\Gamma(V)$ the space of smooth sections of $V$. 
 \item We will freely use the theory of Hilbert modules, and $C(X)$-$C^*$-algebras. We refer the reader to \cite{MR1325694,MR918241,MR1686846} for the necessary definitions.
\item We will write $K(A)$ to mean the $K$ theory of a $C^*$-algebra $A$, when the degree ($K_0$ or $K_1$) is irrelevant or clear from the discussion.
\end{itemize}
\section{Rockland condition on Carnot groups}\label{Rockl sect}
\paragraph{Carnot groups.}Let $G$ be a Carnot Lie group.\footnote{The terminology is not the same across literature.} This means that $G$ is a simply connected Lie group and its lie algebra is equipped with a decomposition $\mathfrak{g}=\oplus_{i=1}^n\mathfrak{g}_i$ such that $[\mathfrak{g}_i,\mathfrak{g}_j]\subseteq \mathfrak{g}_{i+j}$ for all $i,j\in \N$ with the convention, $\mathfrak{g}_k=0$ if $k>n$.  The group is sometimes called $n$-step Carnot group to emphasize the number of components. The Baker-Campbell-Hausdorff formula allows us to identify $G$  with $\mathfrak{g},$ which we will do without further mention.

Let $V_0, V_1$ be Hermitian vector spaces. We will identify the enveloping algebra $$\mathcal{U}(\mathfrak{g},V_0,V_1):=\mathcal{U}(\mathfrak{g})\otimes_\C \Hom(V_0,V_1)$$ with the algebra of right $G$-invariant differential operators on $G$ with coefficients in $\Hom(V_0,V_1)$. If $V_0=V_1$, we will simply write $\mathcal{U}(\mathfrak{g},V_0).$

For $\lambda\in \R$, let $\alpha_\lambda\in \End(\mathfrak{g})$ be the Lie algebra endomorphism $$\alpha_\lambda(x)=\lambda^i x,\quad x\in \mathfrak{g}_i.$$The Baker-Campbell-Hausdorff formula shows that they are also Lie group homomorphisms. We extend them to $\mathcal{U}(\mathfrak{g},V_0,V_1)$ by acting trivially on $\Hom(V_0,V_1)$. We will denote by $\mathcal{U}_k(\mathfrak{g},V_0,V_{1})$ the set of elements $D\in \mathcal{U}(\mathfrak{g},V_0,V_1)$ such that $\alpha_\lambda (D)=\lambda^kD$ for $\lambda\in \R^*.$ One has $\mathcal{U}(\mathfrak{g},V_0,V_1)=\oplus_{k\geq 0}\mathcal{U}_k(\mathfrak{g},V_0,V_1).$
 
We will always assume that a Carnot Lie group is equipped with a left invariant Riemannian metric such that the different $\mathfrak{g}_i's$ are orthogonal to each other. This fixes the choice of a Haar measure. 
\begin{prop}\label{div of X}Let $G$ be a Carnot Lie group. If $X\in \mathfrak{g}$, then $\mathrm{div}(X)=0$.\end{prop}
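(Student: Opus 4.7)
The plan is to first recognize that the Haar measure fixed by the left-invariant Riemannian volume form is in particular a left Haar measure $\mu$, and then to invoke the standard identity
\[
\mathrm{div}(X) \;=\; \mathrm{tr}\bigl(\mathrm{ad}(X)\bigr)
\]
valid for any left-invariant vector field $X$ on a Lie group with respect to a left Haar measure. This reduces the proposition to the purely Lie-algebraic statement that $\mathrm{tr}(\mathrm{ad}(X)) = 0$ for every $X \in \mathfrak{g}$. To derive the identity, one observes that the flow of the left-invariant vector field $X$ is right translation by $\exp(tX)$, which scales a left Haar measure by the modular function $\Delta(\exp(tX)) = \lvert\det \mathrm{Ad}(\exp(tX))\rvert = \exp\bigl(t\,\mathrm{tr}(\mathrm{ad}(X))\bigr)$; differentiating at $t=0$ then gives $\mathcal{L}_X \mu = \mathrm{tr}(\mathrm{ad}(X))\,\mu$, which is the defining relation for $\mathrm{div}(X)$.

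Next, decompose $X = X_1 + \cdots + X_n$ according to the grading, with $X_i \in \mathfrak{g}_i$ and $i \geq 1$. The defining condition $[\mathfrak{g}_i,\mathfrak{g}_j] \subseteq \mathfrak{g}_{i+j}$ ensures that $\mathrm{ad}(X_i)$ sends $\mathfrak{g}_j$ into $\mathfrak{g}_{i+j}$, a summand disjoint from $\mathfrak{g}_j$ because $i \geq 1$. Picking any basis of $\mathfrak{g}$ adapted to the decomposition $\mathfrak{g} = \oplus_i \mathfrak{g}_i$, the matrix of $\mathrm{ad}(X_i)$ therefore has zero diagonal entries, whence $\mathrm{tr}(\mathrm{ad}(X_i)) = 0$. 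Summing over $i$ yields $\mathrm{tr}(\mathrm{ad}(X)) = 0$, as required.

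There is no real obstacle here; the proposition is a special instance of the classical fact that nilpotent Lie groups are unimodular (note that $\mathrm{ad}(X)$ is actually nilpotent, since the grading has length at most $n$, so iterating $\mathrm{ad}(X)$ eventually lands in $\mathfrak{g}_k = 0$). The only point that deserves care is the divergence-modular-function relation invoked above, which is a routine consequence of the change-of-variables formula for Haar measures.
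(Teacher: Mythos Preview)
Your proof is correct and follows essentially the same approach as the paper: reduce $\mathrm{div}(X)$ to $\pm\tr(\ad_X)$ (the paper invokes Cartan's formula, you use the modular function, and the sign depends on left/right conventions but is irrelevant since the trace vanishes), then observe that $\tr(\ad_X)=0$ for a nilpotent Lie algebra. Your explicit use of the grading to see that $\ad_X$ is strictly block-upper-triangular is a slightly more hands-on version of the paper's one-line appeal to nilpotency, but the argument is the same.
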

\begin{proof}
On a Lie group it is well know that $\mathrm{div}(X)=-\tr(\ad_X)$. This formula follows from Cartan's formula for the exterior derivative of differential forms. The trace is $0$ for a nilpotent Lie group.
\end{proof}
It follows from \cref{div of X} that the formal adjoint of a differential operator in $\mathcal{U}_k(\mathfrak{g},V_0,V_1)$ is in $\mathcal{U}_k(\mathfrak{g},V_1,V_0)$.

In this article we are interested in \textit{bundles} of Carnot Lie groups. A bundle of ($n$-step) Carnot Lie groups over a compact manifold $M$ is an $\N$-graded vector bundle $\mathfrak{g}=\oplus_{i=1}^n\mathfrak{g}_i\to M$ equipped with a smooth family of Lie brackets $[\cdot,\cdot]$ such that $[\mathfrak{g}_i,\mathfrak{g}_j]\subseteq \mathfrak{g}_{i+j}.$ The Baker-Campbell-Hausdorff formula implies that the fiberwise product map $G\times_M G\to G$ is smooth. We don't put any conditions of local triviality of the group bundle. 

We will always assume that the bundle $\mathfrak{g}$ is equipped with a Euclidean metric such that different $\mathfrak{g}_i$'s are orthogonal to each other. Let $V_0,V_1\to M$ be Hermitian vector bundles, $D\in \Gamma(\mathcal{U}_k(\mathfrak{g},V_0,V_1))$. Since $D$ is right invariant, it follows that for $f,g\in C^\infty_c(G)$, $$D(f\star g)=D(f)\star g.$$ We will denote by $$\overline{D}:\dom(\overline{D})\subseteq \Gamma(V_0)\otimes_{C(M)} C^*G\to \Gamma(V_1)\otimes_{C(M)} C^*G$$ the closure of $D$. The $C^*$-adjoint of $\overline{D}$ contains $\overline{D^t}$, where $D^t$ denotes the formal adjoint of $D$. It follows that $\overline{D}$ has a densely defined adjoint.

\paragraph{The Rockland condition.}
Let $x\in M$, $\pi:G_x\to \mathcal{U}(B(H))$ be a unitary representation. It follows that $\pi$ can be regarded as a representation of $C^*G$. By \cite[lemma 1.15]{MR2270655}, $$\pi(\overline{D}):\dom(\pi(\overline{D}))\subseteq V_{0,x}\otimes H\to  V_{1,x}\otimes H$$ is a well defined closed unbounded operator.

\begin{dfn}An operator $D\in\Gamma(\mathcal{U}_k(\mathfrak{g},V_0,V_1))$ is said to satisfy the Rockland condition at a point $x$ if for every non-trivial irreducible representation $\pi$ of $G_x$, $\pi(\overline{D})$ and $\pi(\overline{D^t})$ are injective. The operator $D$ is said to satisfy the Rockland condition if it does at every point $x\in M.$
\end{dfn}
If $G$ is commutative, then an operator $D\in \Gamma(\mathcal{U}_k(\mathfrak{g},V_0,V_1))$ under the Euclidean Fourier transform becomes a polynomial $\sigma(D):\mathfrak{g}^*\to \End(V_0,V_1)$. The Rockland condition at $x$ becomes the usual (graded) ellipticity condition that $\sigma(D)(x,\xi)$ is an isomorphism for every $\xi\in \mathfrak{g}_x^*\backslash \{0\}.$

\begin{prop}\label{Rockland open}Let $D\in \Gamma(\mathcal{U}_k(\mathfrak{g},V_0,V_1))$ be an operator. The set of all $x\in M$ such that $D$ satisfy the Rockland condition at $x$ is an open set.
\end{prop}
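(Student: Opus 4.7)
The plan is to reduce the openness of the Rockland condition to openness of invertibility in a continuous field of $C^*$-algebras. First, I would use the Rockland theorem recalled above to recast the condition pointwise. The injectivity of $\pi(\overline{D})$ for every non-trivial irreducible representation $\pi$ of $G_x$ is equivalent to left invertibility of $\sigma_x(D)$, and analogously the injectivity of $\pi(\overline{D^t})$ is equivalent to left invertibility of $\sigma_x(D)^* = \sigma_x(D^t)$, i.e.\ to right invertibility of $\sigma_x(D)$. Therefore Rockland at $x$ is equivalent to $\sigma_x(D)$ being invertible as an unbounded multiplier of $C^*(G_x)$.

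Next I would construct a local parametrix. Starting from $x_0 \in M$ at which Rockland holds, the Rockland theorem provides a two-sided inverse $Q_{x_0}$ for $\sigma_{x_0}(D)$ inside the appropriate algebra of homogeneous pseudodifferential symbols of order $-k$. Using local trivializations of $\mathfrak{g}$, $V_0$, $V_1$ around $x_0$ together with a partition of unity, I would extend $Q_{x_0}$ to a smooth family $x \mapsto Q_x$ on a neighborhood $U$ of $x_0$. Since $\{C^*(G_x)\}_{x \in M}$ is a continuous $C(M)$-algebra and the fiberwise compositions depend continuously on $x$, the remainders $Q_xD_x - I$ and $D_xQ_x - I$ form continuous families of (multipliers of) elements of $C^*(G_x)$, and by iterating the parametrix construction can be arranged to have norm strictly less than $1$ at $x_0$.

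I would then invoke the standard fact that invertibility is an open condition in a Banach algebra. For $x$ in a sufficiently small neighborhood of $x_0$, both $Q_xD_x$ and $D_xQ_x$ remain within norm distance less than one of the identity in their respective multiplier algebras, hence are invertible. This yields simultaneous left and right invertibility of $\sigma_x(D)$, so Rockland holds at every $x$ in the neighborhood, establishing openness.

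The main obstacle will be making the parametrix construction vary smoothly with $x$ in spite of the fact that the group bundle $\mathfrak{g}$ is not assumed locally trivial, so that a fixed parametrix on one fiber cannot simply be translated to the others. This has to be carried out inside the inhomogeneous pseudodifferential calculus referenced in the introduction, or equivalently inside the tangent-groupoid/deformation-groupoid framework used elsewhere in the paper, which ensures that parametrices can be assembled fiberwise in a way compatible with the continuous field structure of $C^*G$ over $M$.
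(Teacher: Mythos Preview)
Your outline follows the natural ``parametrix plus Neumann series'' route, and you correctly identify the crux: the Lie-group bundle is only locally trivial as a vector bundle, not as a bundle of groups, so transporting a parametrix $Q_{x_0}$ to nearby fibres is precisely the nontrivial step. Unfortunately your proposal stops exactly there. Saying that the transported remainders $Q_xD_x-I$ form a continuous family of multipliers is not enough: these are order-$0$ symbols, hence only bounded (not compact) multipliers of $C^*G_x$, and the relevant multiplier norm is not manifestly upper semicontinuous in $x$ when the group law itself is moving. The last paragraph of your write-up essentially concedes that the argument is not yet a proof.

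The paper takes a different and rather slick route that sidesteps the local-triviality problem altogether. One chooses finitely many graded generators of $\mathfrak g$ over $M$ and forms the \emph{constant} bundle $\mathbb G\to M$ whose fibre is the free Carnot group on those generators; by construction there is a fibrewise surjective Carnot homomorphism $\phi:\mathbb G\to G$. The key observation (Lemma~\ref{lift Rock}) is that for any lift $\tilde u$ of $u$ and any Rockland symbol $k$ on $\ker\phi$, the symbol $\tilde u^*\!\star\tilde u+k^*\!\star k$ on $\mathbb G$ satisfies Rockland at $x$ whenever $u$ does, and pushes forward to $u^*\!\star u$. Since $\mathbb G$ is a \emph{constant} bundle, openness of the Rockland set there is the classical result of Christ--Geller--G\l owacki--Polin \cite[Theorem~2.5]{MR1185817}; pushing back down gives openness for $u^*\!\star u$, hence for $u$ (and likewise for $u^*$). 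In short: rather than making the parametrix vary with the group structure, the paper freezes the group structure by lifting to a free model and quotes the locally trivial case.

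If you want to salvage your direct approach, you would need to prove that the principal part of $Q_xD_x-I$ (computed with the $G_x$-convolution) varies continuously and vanishes at $x_0$, and then control the order-$0$ multiplier norm uniformly in $x$; this is essentially what the cited references do in the constant-bundle case, and reproving it for varying group structure is the work the paper deliberately avoids.
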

\begin{prop}\label{Rockland equiv}
Let $D\in \Gamma(\mathcal{U}_k(\mathfrak{g},V_0,V_1))$ for some $k\geq 0$ be an operator that satisfies the Rockland condition, then
\begin{enumerate}
\item The operator $\overline{D}$ is regular in the sense of Baaj-Julg \cite{MR715325}, and $\overline{D}^*=\overline{D^t}.$
\item the operator $$\bar{D}(1+\bar{D}^*\bar{D})^{-\frac{1}{2}}\in \mathcal{L}(\Gamma(V_0)\otimes_{C(M)} C^*G,\Gamma(V_1)\otimes_{C(M)} C^*G)$$ is a Fredholm operator in the sense of Kasparov $C^*$-modules.
\end{enumerate}
\end{prop}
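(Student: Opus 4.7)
The strategy is to reduce both statements to the existence of a (symbolic) inverse of the positive operator $P := 1 + D^t D \in \Gamma(\mathcal{U}_{2k}(\mathfrak{g}, V_0))$, obtained from the Rockland--Helffer--Nourrigat theorem applied in families.

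First I would check that $P$ is formally positive self-adjoint and satisfies the Rockland condition at every point. Indeed for any non-trivial irreducible representation $\pi$ of $G_x$, the operator $\pi(\overline{P})$ extends $1 + \pi(\overline{D})^* \pi(\overline{D})$, which is strictly positive, hence injective; and $P^t = P$. So $P$ fits into the framework of the classical Rockland theorem on each fibre $G_x$.

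Next, the core step is to apply the Helffer--Nourrigat theorem fibrewise: for each $x \in M$ one obtains a right-invariant pseudodifferential parametrix $Q_x$ of order $-2k$ on $G_x$ which realises $(1+\overline{P_x})^{-1}$ as convolution with a fibrewise Schwartz function $q_x \in \mathcal{S}(G_x) \otimes \End(V_{0,x})$. I would then globalise this construction in $x$, producing an adjointable operator $Q$ on the Hilbert $C^*G$-module $\Gamma(V_0) \otimes_{C(M)} C^*G$ which is a two-sided inverse for $1 + \overline{P}$. The existence of $Q$ immediately yields density of the range of $1 + \bar D^* \bar D$, which is Baaj--Julg's criterion for regularity of $\bar D$; and the identity $\bar D^{*} = \overline{D^t}$ follows because every $\xi \in \dom(\bar D^*)$ satisfies $\xi = (1+\overline{P})Q\xi$ with $Q\xi \in \dom(\overline{D^t D})$, so the graph of $\bar D^*$ is exhausted by the graph closure of $D^t$.

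For the Fredholm statement, writing $F = \bar D(1+\bar D^* \bar D)^{-1/2}$, I would compute
\[
1 - F^*F = (1 + \bar D^* \bar D)^{-1}, \qquad 1 - F F^* = (1 + \bar D \bar D^*)^{-1},
\]
and observe that by the same parametrix argument, both operators act by convolution with fibrewise Schwartz kernels valued in $\End(V_i)$. Elements of $\Gamma(\End(V_i)) \otimes_{C(M)} C^*G$ are exactly compact operators on the finitely generated projective Hilbert $C^*G$-module $\Gamma(V_i) \otimes_{C(M)} C^*G$, giving Fredholmness in Kasparov's sense.

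The hard part will be the globalisation in the second paragraph: the Lie group structure of $G_x$ is not locally constant in $x$ (as stressed in the introduction), so one cannot trivialise the family of parametrices. One must instead control the Schwartz kernels $q_x$ and their derivatives uniformly in $x$, for example by running the Helffer--Nourrigat construction inside the bundle $\mathfrak{g} \to M$ with $C(M)$-linear estimates, or equivalently by working in a $C(M)$-linear version of the homogeneous pseudodifferential calculus on the group bundle $G$.
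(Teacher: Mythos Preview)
Your overall strategy---produce a parametrix for $D^tD$ from the Rockland condition, deduce regularity of $\bar D$ via the Baaj--Julg criterion (surjectivity of $1+\bar D^*\bar D$), and then read off Fredholmness from $1-F^*F=(1+\bar D^*\bar D)^{-1}$ and $1-FF^*=(1+\bar D\bar D^*)^{-1}$ being compact---is exactly the paper's route, and your argument for $\bar D^*=\overline{D^t}$ is fine.

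The substantive difference is precisely at the point you flag as ``the hard part''. You propose to run Helffer--Nourrigat fibrewise and then control the Schwartz kernels $q_x$ uniformly in $x$; this is plausible but you do not actually do it, and it is not obvious how to organise such estimates when the group law itself varies. The paper bypasses this entirely with a lifting trick (Lemma~\ref{lift Rock} and the proof of Theorem~\ref{Parametrix symbols}): pick finitely many homogeneous sections $X_j^i\in\Gamma(\mathfrak{g}_i)$ generating $\mathfrak{g}$ and form the \emph{constant} bundle $\mathbb{G}\to M$ whose fibre is the free $n$-step Carnot group on those generators. There is a fibrewise submersive homomorphism $\phi:\mathbb{G}\to G$, and any symbol $u$ on $G$ satisfying Rockland lifts to a symbol on $\mathbb{G}$ satisfying Rockland (take any lift $\tilde u$ of $u$ and replace $u^*\!\star u$ by $\tilde u^*\!\star\tilde u+k^*\!\star k$ with $k$ Rockland on $\ker\phi$). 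On the constant bundle $\mathbb{G}$ the parametrix construction of Christ--Geller--G\l owacki--Polin applies verbatim and yields a global symbolic parametrix, which then pushes forward to $G$ by $\phi_*$. This reduces the non-locally-trivial case to the trivial one in a single stroke, and is the idea you are missing.

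One small correction: you invoke ``the classical Rockland theorem'' for $P=1+D^tD$, but $P$ is not homogeneous. What you actually need (and what the paper uses) is the symbol-calculus version: $D^tD$ is homogeneous of order $2k$ and Rockland, so it has a parametrix $L\in S^{-2k}(G)$ with $D^tD\star L-1\in S^{-1}(G)$; compactness of $(1+\bar D^*\bar D)^{-1}$ then follows from the resolvent identity and the fact that symbols of negative order act compactly (Theorem~\ref{symbols bounded}).
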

The proof of propositions \ref{Rockland open}, \ref{Rockland equiv} is given in \cref{Sect Symbols}.
\begin{rem}I don't know if the operator $\overline{D}$ is regular without the Rockland condition. This is (trivially) true for commutative groups. It makes no difference in what follows.
\end{rem}

Let $D\in \Gamma(\mathcal{U}_k(\mathfrak{g},V_0,V_1))$ satisfies the Rockland condition, then the element \begin{align*}
[D]:=[\Gamma(V_0\oplus V_1)\otimes_{C(M)}C^*G,\begin{pmatrix}
0 & \overline{D}^*(1+\overline{D}\overline{D}^*)^{-\frac{1}{2}} \\  \overline{D}(1+\overline{D}^*\overline{D})^{-\frac{1}{2}} &0
\end{pmatrix}]\\\in \KK_0(\C,C^*G)=K_0(C^*G).
\end{align*} In the ungraded case, a bundle $D\in \Gamma(\mathcal{U}_k(\mathfrak{g},V))$ of symmetric differential operators satisfying the Rockland condition gives an element $$[D]:=[(\Gamma(V)\otimes_{C(M)}C^*G),\overline{D}(1+\overline{D}^*\overline{D})^{-\frac{1}{2}}]\in \KK_1(\C,C^*G)=K_1(C^*G).$$
\paragraph{Functoriality}Let $G_1,G_2$ be bundles of Carnot Lie groups over $M$, $\phi:G_1\to  G_2$ a bundle of Carnot Lie group homomorphisms. Here Carnot homomorphism means that $\phi(\mathfrak{g}_{1,i})\subseteq \mathfrak{g}_{2,i}$ for every $i$. One naturally gets a map $\phi:\mathcal{U}(\mathfrak{g}_1,V)\to \mathcal{U}(\mathfrak{g}_2,V).$

If $\phi$ is surjective, then integrating along the fibers gives a surjective $C^*$-morphism $$C^*\phi:C^*G_1\to C^*G_2.$$ In that case if $D\in\mathcal{U}(\mathfrak{g}_1,V) $ satisfies the Rockland condition, then $\phi(D)$ does as well and \begin{equation}\label{eqn funct k theory class}
 K(C^*\phi)([D])=[\phi(D)]\in K(C^*G_2).
\end{equation}

Let us remark that the last identity is quite silly because \cref{cor connes-Thom} says that $K(C^*\phi)$ is the zero map whenever $\dim(G_1)>\dim(G_2)$. On the other hand its contrapositive can be quite useful : if $D\in  \mathcal{U}(\mathfrak{g},V)$ is an operator, $\phi$ a non-injective surjective Carnot homomorphism such that $\phi(D)$ satisfies the Rockland condition and $[\phi(D)]\neq 0$, then $D$ doesn't satisfy the Rockland condition.
\begin{examps}\label{examps sect 1}\begin{enumerate}
\item Let $s\in \N$ be such that $i$ divides $s$ for every $i=1,\cdots,n$. If $X_i\in \Gamma(\mathfrak{g})$ is an orthonormal homogeneous basis of $\mathfrak{g}.$ Here homogeneous means that each $X_i\in\Gamma(\mathfrak{g}_{w(i)})$ for some $w(i)\in \N$ , then the operator $$\sum (-1)^{\frac{s}{w(i)}}X_i^{\frac{2s}{w(i)}}\in \Gamma(\mathcal{U}_{2s}(\mathfrak{g}))$$ is a positive operator that satisfies the Rockland condition.
\item Let $V\to M$ be a Hermitian vector bundle, $c:\mathfrak{g}\to \End(V)$ a bundle of linear maps such that \begin{align*}
c(X)^2=\norm{X}^2\mathrm{Id}_V,\quad c(X)^*=c(X),\quad \forall X\in \mathfrak{g}.
\end{align*}
For each $i$, let $$D_i=\sqrt{-1}\sum_{j}c(X_j)X_j,$$ where $X_j$ is an orthonormal basis of $\mathfrak{g}_i$. Consider $s\in ]n,+\infty[$. The following symmetric operator\footnote{It is only a pseudo-differential operator, see \cref{Sect Symbols}.} satisfies the Rockland condition $$D=\sum_{i=1}^{n}|D_i|^{\frac{s}{i}-1}D_i\in \Gamma(\mathcal{U}_s(\mathfrak{g},V)).$$ To see this let $x\in M$, $\pi$ be any non-trivial irreducible representation of $G_x$ acting on a Hilbert space $H$. By Kirillov's theory \cite{MR0133406}, $\pi$ corresponds to a nonzero element $l=(l_1,\cdots,l_n)\in \oplus_{i=1}^n\mathfrak{g}_{i,x}^*$. Let $k$ be the maximal $i$ such that $l_i\neq 0$. For every $j>k$, $\pi(\mathfrak{g}_{j,x})=0$, and $\pi(\mathfrak{g}_{k,x})=\C\cdot \mathrm{Id}.$ It follows that acting on $H\otimes V_x$ $$\pi(D)=\pi\left(\sum_{i<k}|D_i|^{\frac{s}{i}-1}D_i\right)+\norm{l_k}^{\frac{s}{k}-1}\mathrm{Id}\otimes  c(\#l_k),$$ where $\#:\mathfrak{g}_k^*\to\mathfrak{g}_k$ is the isomorphism given by the Euclidean metric. The first operator is symmetric and anti-commutes with the second. It follows that $\pi(D)^2$ is the sum of a positive operator and $\norm{l_k}^{\frac{2s}{k}}\cdot \mathrm{Id}$. Hence $\pi(D)$ is injective.

\item Suppose that for each $i\geq 1$, $\mathfrak{g}_{i+1}=[\mathfrak{g}_1,\mathfrak{g}_{i}]$. If $X_k$ denotes a base of $\mathfrak{g}_1$, then the following operator satisfies the Rockland condition$$D=\sum_{k}X_k^2.$$

One might also consider a Dirac operator $$D=\sum_{k}c(X_k)X_k,$$ where $c:\mathfrak{g}_1\to \End(V)$ is a Clifford action. This operator \textit{doesn't} satisfy the Rockland condition in general. Let $\phi:G\to \mathfrak{g}_1$ be the projection map onto $\mathfrak{g}_1$ with the trivial Lie bracket. If $\mathfrak{g}_1\subsetneq \mathfrak{g}$, then a necessary condition for $D$ to satisfy Rockland condition is that the class of $[\phi(D)]\in K(C_0(\mathfrak{g}_1^*))$ is $0$. This is the classical Dirac-type operator whose class is usually nonzero. Such operators for contact manifolds were studied in \cite{MR2122745,MR3403787,ThesisDiracContact}. In particular in \cite{ThesisDiracContact}, a way to modify the operator so that it satisfies the Rockland condition is given.
\item Let $M=\{pt\}$, $G$ be a $2$-step Carnot group with $\mathfrak{g}_2=\R T$, $X_i$ an orthonormal basis of $\mathfrak{g}_1$, $\gamma\in M_n(\C)$. Consider the operator $$D=-\sum_{i}X_i^2\mathrm{Id}_{\C^n}+ \sqrt{-1}\gamma T\in \mathcal{U}_2(\mathfrak{g},\C^n,\C^n).$$

Let $2n$ be the rank of $[\cdot,\cdot]:\mathfrak{g}_1\times \mathfrak{g}_1\to \R$, $\pm \sqrt{-1}\lambda_1,\cdots,\pm \sqrt{-1}\lambda_{n}$ with $\lambda_k>0$ the nonzero eigenvalues of $[\cdot,\cdot]$ counted with multiplicity.

It is proved in \cite[section 3.4]{MR2417549} that $D$ satisfies the Rockland condition if and only if the spectrum of $\gamma$ avoids the singular set \begin{align*}
]-\infty,-\sum_{k=1}^n \lambda_k]\cup [\sum_{k=1}^n \lambda_k,+\infty[\quad\text{if}\; 2n<\dim(\mathfrak{g}_1)\\
\{\pm (\sum_{k=1}^n \lambda_k+2\sum_{k=1}^n\alpha_j\lambda_j):\alpha\in \N_0^n\}\quad\text{if}\; 2n=\dim(\mathfrak{g}_1)
\end{align*}
\end{enumerate}
\end{examps}
\section{Connes-Thom isomorphism}\label{Connes-Thom sect}
\paragraph{Connes-Thom isomorphism}
Let $G$ be a bundle of Carnot Lie groups over a compact manifold $M$. One can define a bundle of Carnot Lie groups over $M\times [0,1]$, denoted by $\dnc(G,M)$.\footnote{See \cite{ClaireGeorges} for the general construction of the deformation to the normal cone and its relation to Lie groupoids. The construction given here is more adapted to Carnot groups and is slightly different from that of \cite{ClaireGeorges}} The bundle $\dnc(G,M)$ is defined as follows; over $(x,t)\in M\times [0,1]$, its Lie algebra is the Lie algebra $\mathfrak{g}_x$ with the Lie bracket $$[g,h]:=\alpha_t([g,h]_{\mathfrak{g}_{x}}),$$ where $\alpha$ are the endomorphisms defined in \cref{Rockl sect}. It is clear that $$\dnc(G,M)=G\times ]0,1]\sqcup \mathfrak{g}\times \{0\}.$$
 Its $C^*$-algebra $C^*(\dnc(G,M))$ lies in the exact sequence $$0\to C_0(]0,1])\otimes C^*G\to C^*(\dnc(G,M))\xrightarrow{\ev_0}  C_0(\mathfrak{g}^*)\to 0. $$
Since the $C^*$-algebra $C_0(]0,1])$ is contractible, it follows that the map $\ev_0$ is an isomorphism in $K$-theory. The composition $$K_*\left(C_0(\mathfrak{g}^*)\right)\xrightarrow{K(\ev_0)^{-1}} K_*(C^*(\dnc(G,X)))\xrightarrow{K(\ev_1)}K_*(C^*G)$$ is called the Connes-Thom isomorphism and will be denoted by $\Ind_G$
 
\begin{rem}\label{def index map}Later on different deformations will appear, to each one we will have an index map in $K$-theory constructed exactly as above.
\end{rem}
\begin{theorem}[Connes \cite{MR605351}, see also \cite{MR600000}]\label{Connes-Thom}The map $\Ind_G$ is an isomorphism.
\end{theorem}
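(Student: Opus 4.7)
I would argue by induction on the step $n$ of the Carnot bundle $G$. The base case $n=1$ is immediate: every fiber is abelian, the dilations $\alpha_t$ are fiberwise Lie algebra isomorphisms for $t>0$, and $\dnc(G,M)$ becomes the constant family $G\times [0,1]$ after the substitution $X\mapsto \alpha_t^{-1}X$, so $\ev_0$ and $\ev_1$ coincide in $K$-theory modulo the Fourier isomorphism $C^*\mathfrak{g}_1\cong C_0(\mathfrak{g}_1^*)$; hence $\Ind_G$ is an isomorphism in this case.

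For the inductive step, assume the result for all bundles of step $<n$ and let $G$ have step exactly $n$. The top piece $\mathfrak{g}_n$ is central fiberwise, yielding a central extension $0\to \mathfrak{g}_n\to G\to Q\to 0$ with $Q$ a bundle of $(n-1)$-step Carnot groups (Lie algebra $\oplus_{i<n}\mathfrak{g}_i$, with the truncated bracket). Introduce an auxiliary bundle $H\to M\times [0,1]$ whose fiber at $(x,s)$ is the graded vector space $\mathfrak{g}_x$ with bracket
$$[X,Y]_s := \mathrm{pr}_{<n}([X,Y]_\mathfrak{g}) + s\cdot \mathrm{pr}_n([X,Y]_\mathfrak{g}),$$
where $\mathrm{pr}_{<n}$ and $\mathrm{pr}_n$ project $\mathfrak{g}$ onto $\oplus_{i<n}\mathfrak{g}_i$ and $\mathfrak{g}_n$ respectively. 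The graded Jacobi identity reduces, on homogeneous elements of total degree $n$, to the original Jacobi identity rescaled by $s$, and is vacuous in higher degrees; so $H$ is a bundle of $n$-step Carnot groups, with $H_{s=1}=G$ and $H_{s=0}=\mathfrak{g}_n\times Q$ (direct product of Carnot bundles).

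Now form the two-parameter family over $M\times [0,1]^2$ whose Lie bracket at $(x,s,t)$ is $\alpha_t([\cdot,\cdot]_s)$; this is again a bundle of $n$-step Carnot groups. Evaluating at the four corners and chasing the resulting diagram of $C^*$-algebras through the six-term exact sequence---together with the cone-contractibility argument already used to define $\Ind$ in \cref{def index map}---produces a commutative $K$-theory square identifying $\Ind_G$ with $\Ind_{\mathfrak{g}_n\times Q}$ (up to vertical $K$-theory isomorphisms coming from the $s$-direction of $H$). Finally, the product structure gives $\dnc(\mathfrak{g}_n\times Q,M)\cong \dnc(\mathfrak{g}_n,M)\times_{M\times[0,1]}\dnc(Q,M)$, so $\Ind_{\mathfrak{g}_n\times Q}$ factors as the Kasparov external product $\Ind_{\mathfrak{g}_n}\boxtimes \Ind_Q$; both factors are isomorphisms by the base case and the inductive hypothesis respectively, completing the induction.

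\textbf{Main obstacle.} The delicate step is verifying the commutative $K$-theory square that links $\Ind_G$ with $\Ind_{\mathfrak{g}_n\times Q}$. One must write down the two-parameter deformation rigorously (the joint Jacobi identity is straightforward but needs to be checked), and then carry out a careful diagram chase with the six-term sequences in both the $s$- and $t$-directions, ensuring that the vertical comparison map is itself a $K$-isomorphism. A secondary subtlety is the external-product decomposition $\Ind_{\mathfrak{g}_n\times Q}=\Ind_{\mathfrak{g}_n}\boxtimes \Ind_Q$: this is morally transparent from the product structure of the deformation groupoids, but must be made precise at the level of Kasparov products so that it fits with the two-parameter deformation argument above.
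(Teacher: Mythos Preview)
The paper does not prove this theorem at all: it is stated with attribution to Connes \cite{MR605351} (see also \cite{MR600000}) and used as a black box. So there is no ``paper's own proof'' to compare against; your proposal is an attempt to supply an argument where the paper simply quotes the literature.

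On the merits of your outline, the construction of the auxiliary bundle $H$ and of the two-parameter family is sound (the Jacobi identity does hold for $[\cdot,\cdot]_s$, by the case split you indicate). The commutative square you obtain reads
\[
\Ind_G \;=\; \bigl(\text{$s$-deformation index map } K(C^*(\mathfrak{g}_n\times Q))\to K(C^*G)\bigr)\circ \Ind_{\mathfrak{g}_n\times Q},
\]
because the $t=0$ edge of the two-parameter family is constantly abelian and the corresponding vertical map is the identity. Your inductive hypothesis (together with the external-product factorisation, which is fine) makes $\Ind_{\mathfrak{g}_n\times Q}$ an isomorphism. But to conclude that $\Ind_G$ is an isomorphism you need the \emph{right} vertical map---the $s$-deformation index map---to be an isomorphism, and this is precisely the point you flag as the ``main obstacle'' without resolving it. The two-parameter diagram by itself only gives equality of the two compositions; it does not force the unknown vertical arrow to be invertible.

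That vertical map is the $K$-theory comparison between $C^*(\mathfrak{g}_n\times Q)=C^*Q\otimes C_0(\mathfrak{g}_n^*)$ and $C^*G$, which is a (twisted) crossed product of $C^*Q$ by $\R^{\dim\mathfrak{g}_n}$; the $s$-family interpolates the defining $2$-cocycle. Showing this comparison is an isomorphism is exactly an instance of the classical Connes--Thom theorem for $\R^{\dim\mathfrak{g}_n}$-actions, i.e.\ the very result being cited. In other words, your induction on the step does not avoid the input from \cite{MR605351}; it hides it in the vertical arrow. If you are willing to invoke Connes--Thom for a single $\R$-action, a cleaner route is to induct on $\dim G$: choose a one-dimensional central subgroup $Z\cong\R$ (any line in $\mathfrak{g}_n$), write $C^*G$ and $C_0(\mathfrak{g}^*)$ as $\R$-crossed products over $G/Z$ and $\mathfrak{g}/\mathfrak{z}$ respectively, apply Connes--Thom on both sides, and check compatibility with $\Ind_{G/Z}$; this peels off one $\R$ at a time and finishes by induction.
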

\begin{cor}\label{cor connes-Thom}Let $G_1,G_2\to M$ be bundles of Carnot groups, $\phi:G_1\to G_2 $ a bundle of surjective Carnot Lie group homomorphisms. If $\dim(G_1)>\dim(G_2)$, then $$K(C^*\phi):K(C^*G_1)\to K(C^*G_2)$$ is the zero map.
\end{cor}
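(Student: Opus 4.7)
I would deduce the corollary from Theorem \ref{Connes-Thom} by exploiting the naturality of the Connes--Thom construction with respect to $\phi$, reducing the question to the analogous statement for the abelianization at $t=0$.

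Since $\phi$ is a bundle of Carnot homomorphisms, it intertwines the dilations $\alpha_t$ on both sides and therefore lifts canonically to a surjective morphism of bundles of Carnot groups $\dnc(\phi):\dnc(G_1,M)\to\dnc(G_2,M)$ over $M\times[0,1]$, whose fibre at $t=1$ is $\phi$ and whose fibre at $t=0$ (where all brackets vanish) is the underlying bundle map $\phi_*:\mathfrak{g}_1\to\mathfrak{g}_2$ viewed as a surjection of abelian group bundles. Integration along the fibres of the kernel yields a surjective $C^*$-morphism $C^*\dnc(\phi)$ compatible with the evaluation maps $\ev_0$ and $\ev_1$. Applying $K$-theory and invoking \cref{Connes-Thom} produces the commutative diagram
\[
\begin{array}{ccc}
K(C_0(\mathfrak{g}_1^*)) & \xrightarrow{\;\Ind_{G_1}\;} & K(C^*G_1) \\
\Big\downarrow\scriptstyle K(C^*\phi_*) & & \Big\downarrow\scriptstyle K(C^*\phi) \\
K(C_0(\mathfrak{g}_2^*)) & \xrightarrow{\;\Ind_{G_2}\;} & K(C^*G_2),
\end{array}
\]
in which the horizontal arrows are isomorphisms. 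It therefore suffices to verify that $K(C^*\phi_*)=0$.

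Fibrewise Fourier transform identifies $C^*\phi_*:C_0(\mathfrak{g}_1^*)\to C_0(\mathfrak{g}_2^*)$ with the restriction of functions along the closed embedding of subbundles $\phi_*^*:\mathfrak{g}_2^*\hookrightarrow\mathfrak{g}_1^*$. Choosing an orthogonal complement with respect to the fixed metric, I would write $\mathfrak{g}_1^*\cong\mathfrak{g}_2^*\oplus W$ with $\mathrm{rank}(W)=\dim G_1-\dim G_2\geq 1$, and identify the restriction with $\mathrm{id}\otimes_{C(M)}\ev_0$, where $\ev_0:C_0(W)\to C(M)$ is evaluation at the zero section. The task then reduces to showing that for any positive-rank vector bundle $W\to M$, the map $\ev_0$ is zero on $K$-theory.

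Fibrewise this is immediate: the inclusion of a line (or even a point) into higher-dimensional Euclidean space extends to a null-homotopic inclusion of spheres under one-point compactification, so the induced map on reduced $K$-theory vanishes. To upgrade this to the bundle setting, I would extend $\ev_0$ to a $*$-morphism $\widetilde{\ev}_0:C(W^+)\to C(M)$ on the fibrewise one-point compactification and compare it to the evaluation at the section at infinity; the latter agrees with the augmentation $C(W^+)\to C(M)$ whose kernel is precisely the ideal $C_0(W)$. The fibrewise scaling $v\mapsto v/s$ for $s>0$ provides a homotopy on each fibre from the zero to the infinity section, and assembling these globally gives the required homotopy of $*$-morphisms $\widetilde{\ev}_0\simeq\widetilde{\ev}_\infty$; restriction to the ideal $C_0(W)$ then forces the map on $K$-theory to vanish. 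The main obstacle I anticipate is making the last global step rigorous, since the coherent choice of fibrewise homotopies is not entirely formal for general bundles; here the $\N$-grading on $W$ (which is preserved because $\phi_*^*$ is a bundle of Carnot maps) provides the necessary uniformity through the dilations $\alpha_t$, allowing the scaling homotopy to be performed continuously in $m\in M$.
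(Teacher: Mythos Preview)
Your overall strategy coincides with the paper's: lift $\phi$ to a surjection $\dnc(\phi)$ between the deformation bundles, obtain the commuting square with the Connes--Thom isomorphisms on the verticals, and reduce to the abelianised map at $t=0$, namely restriction of $C_0$-functions along the linear embedding $\phi^*\colon\mathfrak g_2^*\hookrightarrow\mathfrak g_1^*$. The paper dismisses this last step in one line (``It is clear that $K(\phi^*)$ is the zero map''); you try to supply an argument and honestly flag an obstruction in the final homotopy.

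Your caution is justified: that step is not merely delicate, it is \emph{false} in general, and with it the corollary as stated. Take $M=S^2$, $\mathfrak g_1=TS^2$ placed entirely in degree $1$ with the zero bracket, and $\mathfrak g_2=0$. Then $C^*\phi$ is evaluation at the zero section $C_0(T^*S^2)\to C(S^2)$; under the Thom isomorphism $K^0_c(T^*S^2)\cong K^0(S^2)$ this is cup product with the $K$-theoretic Euler class of $T^*S^2$, which is $\pm 2(H-1)\neq 0$ in $K^0(S^2)$. In your own framework, $W=TS^2$ gives $W^+\cong S^2\times S^2$ via the trivialisation $TS^2\oplus\underline{\R}\cong\underline{\R}^3$, and $\sigma_0,\sigma_\infty$ become the anti-diagonal and the diagonal; these represent $(1,-1)$ and $(1,1)$ in $\pi_2(S^2\times S^2)$ and induce \emph{different} maps on $K^0$. (If you prefer $\mathfrak g_2\neq 0$, take $\mathfrak g_1=TS^2\oplus TS^2$, $\mathfrak g_2=TS^2$, $\phi$ the second projection; the same Euler-class computation applies.) The rescue you propose via the dilations $\alpha_t$ cannot work either: $\alpha_t$ fixes both $0$ and $\infty$ in every fibre and provides no path between them. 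So the gap you detected is real, and the paper's ``clear'' hides an actual error rather than a routine verification.
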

\begin{proof}
One naturally has a bundle of surjective Carnot homomorphisms $\dnc(\phi):\dnc(G_1,M)\to \dnc(G_2,M).$ It follows from the definition of the index map that the following diagram commutes $$\begin{tikzcd}K(C^*G_1)\arrow[r,"K(C^*\phi)"]&K(C^*G_2)\\K(C_0(\mathfrak{g}^*_1))\arrow[u,"\Ind_{G_1}"]\arrow[r,"K(\phi^*)"]&K(C_0(\mathfrak{g}_2^*))\arrow[u,"\Ind_{G_2}"]\end{tikzcd}.$$
The lower map is the pullback map $\phi^*:\mathfrak{g}^*_2\to \mathfrak{g}_1^*$. It is clear that $K(\phi^*)$ is the zero map. The result follows from \cref{Connes-Thom}.\end{proof}

\paragraph{Dirac Operator}Let $c:\mathfrak{g}\to \End(V)$ be a spin$^c$ structure on $\mathfrak{g}.$ In \cref{examps sect 1}, we constructed an operator $D$ associated to $c$ which satisfies the Rockland condition. Since one could do the same construction on the bundle $\dnc(G,M)\to  M\times [0,1]$. It follows from the definition of the index map $$\Ind_G^{-1}([D])=[c]$$ where $[c]$ is the Thom class associated to $c$. 
\paragraph{Crossed product}The following construction appears when one calculates products in $K$-theory \cite{MR0236950}. If $E_0, E_1,F_0,F_1$ are Hilbert spaces, $\alpha:E_0\to E_1,\;\beta:F_0\to F_1$ are closed (unbounded) operators, then one defines the operator $$\alpha\# \beta=\begin{pmatrix}
\alpha\otimes Id_{F_0} & -Id_{E_1}\otimes \beta^*\\Id_{E_0}\otimes \beta& \alpha^*\otimes Id_{F_1}
\end{pmatrix}:(E\otimes F)_0\to (E\otimes F)_1,$$ where $(E\otimes F)_0=E_0\otimes F_0\oplus E_1\otimes F_1$ and  $(E\otimes F)_1=E_1\otimes F_0\oplus E_0\otimes F_1$.

We imitate this construction to crossed products. Let $E,F\to M$ two graded Hermitian vector bundles, $G,N,H$ bundles of Carnot Lie groups over $M$. We will suppose that $N,H$ are Carnot Lie subgroups\footnote{Lie subgroups such that the inclusion is a Carnot homomoprhism} of $G$ and $G=N\rtimes H$.

Let $D_1\in \Gamma(\mathcal{U}_k(\mathfrak{n},E_0,E_1)), D_2\in \Gamma(\mathcal{U}_k(\mathfrak{h},F_0,F_1))$ be operators satisfying the Rockland condition with $k\in \N$. Let $i_N,i_H$ be the inclusions of $N$ and $H$ inside $G$. We consider the operator $$D_1\# D_2=\begin{pmatrix}i_N(D_1)\otimes Id_{F_0} & Id_{E_1}\otimes-i_H(D_2^t)\\Id_{E_0}\otimes i_H(D_2) & i_N(D_1^t)\otimes Id_{F_1}\end{pmatrix}\in\mathcal{U}_k(\mathfrak{g},(E\otimes F)_0,(E\otimes F)_1),$$where $(E\otimes F)_0=E_0\otimes F_0\oplus E_1\otimes F_1$ and  $(E\otimes F)_1=E_1\otimes F_0\oplus E_0\otimes F_1$.
\begin{theorem}\label{main thm on ind crossed product}
\begin{enumerate}
\item There exists $\epsilon>0$ such that for $c\in ]0,\epsilon[$, the operator $D_1\# cD_2$ satisfies the Rockland condition.
\item For any $c\in ]0,\epsilon[$, $$\Ind_{G}^{-1}([D_1\#c D_2])=\Ind_{N}^{-1}([D_1])\otimes \Ind_H^{-1}([D_2])\in K_0(C_0(\mathfrak{g}^*))$$
\end{enumerate} 
\end{theorem}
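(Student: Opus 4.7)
The plan is to establish the Rockland condition for $D_1\#cD_2$ pointwise and then compute its $K$-theory class by a deformation-to-the-abelian argument. For part 1, I fix $x\in M$ and a non-trivial irreducible unitary representation $\pi$ of $G_x$. Since $N_x$ is normal in $G_x=N_x\rtimes H_x$, Mackey theory concentrates $\pi|_{N_x}$ on a single $H_x$-orbit in $\widehat{N_x}$. If that orbit is trivial, then $\pi$ descends to a non-trivial irreducible representation of $H_x$ and $\pi(i_N(D_1))=\pi(i_N(D_1^t))=0$, so $\pi(D_1\#cD_2)$ is purely off-diagonal, with entries of the form $\pm c\,\mathrm{Id}\otimes\pi(i_H(D_2^{t}))$ and $\pm c\,\mathrm{Id}\otimes\pi(i_H(D_2))$, and is injective by the Rockland condition for $D_2$ on $H_x$. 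Otherwise the orbit is non-trivial, and writing $A:=i_N(D_1)$, $B:=i_H(D_2)$, both $\pi(A)$ and $\pi(A^t)$ are injective.

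In this second case, a direct expansion of $\|\pi(D_1\#cD_2)v\|^2$ using $\pi(A^t)=\pi(A)^*$ yields
\[
\|\pi(A)v_0\|^2+\|\pi(A^t)v_1\|^2+c^2\bigl(\|\pi(B)v_0\|^2+\|\pi(B^t)v_1\|^2\bigr)+2c\,\mathrm{Re}\,\langle\pi([A,B])v_0,v_1\rangle.
\]
I would absorb the cross term by writing $[A,B]$ as a sum of terms of the form $A\cdot Z$ or $Z\cdot A^t$ (using that $\mathfrak{n}\subset\mathfrak{g}$ is an ideal, so $\mathrm{ad}_B$ preserves $\mathcal{U}(\mathfrak{n})$) and applying Cauchy-Schwarz to move $\pi(A)$ or $\pi(A^t)$ onto $v_0$ or $v_1$. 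For $c$ sufficiently small the cross term is dominated by the diagonal squares and injectivity follows; the formal adjoint is handled symmetrically. Uniformity of $\epsilon$ across $\pi$ and $x$ comes from compactness of $M$, \cref{Rockland open}, and a stratification of $\widehat{G_x}$ by Mackey orbit type.

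For part 2, since $G=N\rtimes H$ the deformation decomposes as $\dnc(G,M)=\dnc(N,M)\rtimes\dnc(H,M)$. Lifting $D_1,D_2$ to the tautological right-invariant operators on the deformations and setting $\widetilde{\mathcal D}=\widetilde{D_1}\#c\widetilde{D_2}$, part 1 applied fiberwise over the compact base $M\times[0,1]$ produces a uniform $c$ for which $\widetilde{\mathcal D}$ satisfies Rockland throughout, hence a class $[\widetilde{\mathcal D}]\in K(C^*\dnc(G,M))$. By the definition of the index map (\cref{def index map}), evaluation at $t=1$ and $t=0$ returns respectively $[D_1\#cD_2]$ and $\Ind_G^{-1}([D_1\#cD_2])$; the latter is represented by the abelianized operator on $\mathfrak{g}=\mathfrak{n}\oplus\mathfrak{h}$. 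Under Fourier transform this abelianized operator is exactly the unbounded Kasparov exterior product of the abelianized symbols of $D_1$ and $D_2$, so its class is $[\sigma_1]\otimes[\sigma_2]\in K(C_0(\mathfrak{n}^*))\otimes K(C_0(\mathfrak{h}^*))\cong K(C_0(\mathfrak{g}^*))$. Applying the same deformation to $D_1$ and $D_2$ individually identifies $[\sigma_1]=\Ind_N^{-1}([D_1])$ and $[\sigma_2]=\Ind_H^{-1}([D_2])$, giving the formula.

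The main obstacle is the cross-term estimate in the second case of part 1. The commutator $[A,B]$ lies in $\mathcal{U}_{2k}(\mathfrak{g})$, the same Carnot order as the leading diagonal $A^tA$ and $AA^t$, so the perturbation is not soft and controlling it uniformly in $\pi$ requires the subelliptic regularity that Rockland of $D_1$ provides. Everything else---the Mackey case split, the deformation groupoid, and the identification of the abelianized class with an exterior Kasparov product---is structural.
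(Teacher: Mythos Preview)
Your direct attack on Part~1 has a real gap in the cross-term estimate, and the paper takes a completely different route that avoids any such estimate.

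First, the factorization you propose for $[A,B]$ does not hold. The claim that $\mathrm{ad}_B$ preserves $\mathcal{U}(\mathfrak{n})$ is only true for $B\in\mathfrak{h}$, not for $B\in\mathcal{U}_k(\mathfrak{h})$: already for $B=Y_1Y_2$ one has $[A,Y_1Y_2]=[A,Y_1]Y_2+Y_1[A,Y_2]$, which mixes $\mathfrak{n}$ and $\mathfrak{h}$ and has no reason to factor as $A\cdot Z$ or $Z\cdot A^t$. Even if some factorization $[A,B]=\sum A Z_j+ W_j A^t$ were available, the operators $Z_j,W_j$ would carry positive Carnot order on $G$, and dominating $\|\pi(Z_j)v\|$ by $\|\pi(A)v\|$ or $\|\pi(B)v\|$ \emph{uniformly in $\pi$} is exactly the kind of subelliptic estimate you do not have: Rockland for $D_1$ is only on $N$, not on $G$. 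Your appeal to compactness of $M$ and orbit stratification does not touch this, since on a fixed stratum the constant in your Cauchy--Schwarz bound can still blow up along a sequence of representations.

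The paper sidesteps all of this with a deformation of the \emph{action}: it builds the bundle $\mathbb{G}=N\rtimes_{\beta\circ\alpha_t}H$ over $M\times[0,1]$, which is the direct product $N\times H$ at $t=0$ (where $D_1\#D_2$ trivially satisfies Rockland because the two factors commute) and is isomorphic to $G$ for $t\neq 0$ via $(n,h)\mapsto(n,\alpha_t h)$. Openness of the Rockland locus (\cref{Rockland open}) then gives Rockland on $M\times[0,\epsilon)$, and under the isomorphism $D_1\#D_2$ on $\mathbb{G}_t$ becomes $D_1\#t^k D_2$ on $G$. No estimate is ever needed.

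For Part~2 your single-deformation idea---run $\dnc(G,M)$ and read off the abelianized class at $t=0$ as the exterior Kasparov product---is a legitimate alternative to the paper's two-step argument (which uses $\mathbb{G}$ to pass to $N\times H$ and then a double deformation $\dnc(\mathbb{G},M\times[0,1])$ to match the Connes--Thom maps, \cref{commt diagram groups}). Your route is shorter, but it requires $D_1\#cD_2$ to satisfy Rockland on all of $\dnc(G,M)$, i.e., Part~1 for the whole deformed family over $M\times[0,1]$---so it still rests on having Part~1 in hand by some method other than your estimate.
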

\begin{rem}
  The theorem remains true if one replaces $D_1$ or $D_2$ by symbols of positive order satisfying the Rockland condition. The same proof holds in this case.
\end{rem}
\begin{proof}
If the crossed product $G=N\rtimes H=N\times H$ was trivial, then the operators $i_N(D_1)\otimes Id_{F_0}$ and $Id_{E_0}\otimes i_H(D_2)$ commute with each other. Since irreducible representations of $N\times H$ are of the form $\pi\otimes \pi'$ for an irreducible representation $\pi$ of $N$ and $\pi'$ of $H$, it follows that $D_1\# D_2$ satisfies the Rockland condition. It is proven in \cite{MR715325}, that $D_1\# D_2$  is the Kasparov product $[D_1]\otimes [D_2]\in K_0(C^*G)$.

For the general case, let $\beta:H\to \Aut(N)$ be the action of $H$ on $N$, $\mathbb{G}=N\rtimes_{\mathbb{\beta}'} H\to M\times [0,1]$ the bundle of Carnot groups where on $M\times \{t\}$, $\beta'=\beta\circ\alpha_t.$ Clearly for $t\neq 0$, the restriction of $\mathbb{G}$ to $M\times \{t\}$ is isomorphic to $G$, and on $M\times \{0\}$, $\mathbb{G}=N\times H$. One defines the operator $D_1\#D_2$ on $\mathbb{G}$. We will denote this operator by $\mathbb{D}$. The operator $\mathbb{D}$ satisfies the Rockland condition at every point in $M\times \{0\}$. It follows from \cref{Rockland open}, that there exists $\epsilon>0$ such that $\mathbb{D}$ satisfies the Rockland condition on $M\times [0,\epsilon].$ Part 1 follows from the isomorphism between $\mathbb{G}_{|M\times\{t\}}$ and $G$ for $t\neq 0.$ For simplicity, we will assume that $\epsilon=c=1$ in the rest of the proof.

The $C^*$-algebra $C^*\mathbb{G}$ lies in an exact sequence $$0\to C^*G\otimes C_0(]0,1])\to C^*\mathbb{G}\to C^*N\otimes C^*H\to 0.$$
By \cref{def index map}, one has an index map $$\Ind_{N\rtimes H}^{N\times H}:K_0(C^*N\otimes C^*H)\to K_0(C^*G).$$Since the operator $\mathbb{D}$ satisfies the Rockland condition on $M\times [0,1]$, it follows from the definition of the index map that $$\Ind_{N\rtimes H}^{N\times H}([D_1]\otimes [D_2])=[D_1\# D_2].$$ The theorem then follows from \cref{commt diagram groups}\qedhere \begin{lem}\label{commt diagram groups}The following diagram commutes $$\begin{tikzcd}[column sep = large]K_0(C_0(\mathfrak{g}^*))\ar[equal]{d}\arrow[r,"\Ind_G"]&K_0(C^*G)\\K_0(C_0(\mathfrak{n}^*\oplus \mathfrak{h}^*))\arrow[r,"\Ind_N\times \Ind_H"]&K_0(C^*N\otimes C^* H)\arrow[u,"\Ind_{N\rtimes H}^{N\times H}"]\end{tikzcd}$$
\end{lem}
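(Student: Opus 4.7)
The plan is to realize both compositions in the diagram as coming from a single two-parameter deformation of Carnot Lie group bundles. Explicitly, I would construct a bundle of Carnot Lie groups $\tilde{\mathbb{G}}\to M\times[0,1]^2$ such that at parameter $(s,t)$ the fiber is obtained by starting from the group $\mathbb{G}_t=N\rtimes_{\beta\circ\alpha_t}H$ and then applying the Connes-Thom scaling by $\alpha_s$ to its Lie bracket, just as in the construction of $\dnc(\mathbb{G}_t,M)$. The same verifications used separately for $\dnc(G,M)$ and for $\mathbb{G}$ show that this is a smooth bundle of Carnot Lie groups over $M\times[0,1]^2$, with $C^*$-algebra a $C([0,1]^2)$-algebra.

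At the four corners one gets $\tilde{\mathbb{G}}_{(1,1)}=G$, $\tilde{\mathbb{G}}_{(1,0)}=N\times H$, and abelian bundles with underlying vector bundle $\mathfrak{g}$ at both $(0,1)$ and $(0,0)$. Restricting to the four edges of the square recovers the four one-parameter deformations already present in the paper:
\begin{itemize}
\item the edge $s=1$ is the deformation $\mathbb{G}$, whose associated index map is $\Ind^{N\times H}_{N\rtimes H}$;
\item the edge $t=1$ is $\dnc(G,M)$, whose associated index map is $\Ind_G$;
\item the edge $t=0$ is the fibered product $\dnc(N,M)\times_M \dnc(H,M)$, whose associated index map is the exterior product $\Ind_N\otimes\Ind_H$;
\item the edge $s=0$ is a constant bundle of abelian Carnot groups, giving the canonical identification $K_0(C_0(\mathfrak{n}^*\oplus\mathfrak{h}^*))=K_0(C_0(\mathfrak{g}^*))$.
\end{itemize}

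The final step is to invoke the standard functoriality of the index map construction: for a $C([0,1]^2)$-algebra, the index map associated to a one-parameter slice (built as the composition of the evaluation at the endpoints, one of which is a $K$-theory isomorphism by contractibility) depends only on the endpoints within the bigger parameter space. Concretely, the two boundary paths in the unit square from $(0,1)$ to $(1,1)$ induce the same map on $K$-theory: the direct path along the top edge realizes $\Ind_G$, while the path along the other three edges (traversing $s=0$, then $t=0$, then $s=1$) realizes $\Ind^{N\times H}_{N\rtimes H}\circ(\Ind_N\otimes\Ind_H)$ after the canonical identification. Their equality is exactly the commutativity of the lemma.

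The main obstacle is the one-time bookkeeping: checking that $\tilde{\mathbb{G}}$ really is a smooth bundle of Carnot Lie groups respecting the grading at every $(s,t)$, and that its four edge-restrictions are genuinely isomorphic to the four deformations listed above. Once these identifications are in place, the path-independence of the index map over the contractible parameter space $[0,1]^2$ is a formal consequence of the fact that the index map is defined as $K(\ev_1)\circ K(\ev_0)^{-1}$ for evaluations at the endpoints of an arbitrary $[0,1]$-parameterized deformation embedded inside $C^*\tilde{\mathbb{G}}$.
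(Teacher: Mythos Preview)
Your proposal is correct and is essentially the same argument as the paper's: your two-parameter bundle $\tilde{\mathbb{G}}$ is exactly $\dnc(\mathbb{G},M\times[0,1])$, and both proofs conclude by showing that the two compositions send a class at the corner $(0,0)$ to its evaluation at $(1,1)$. The paper's write-up is slightly slicker on the two points you flag as obstacles: recognizing $\tilde{\mathbb{G}}$ as $\dnc$ applied to the already-constructed bundle $\mathbb{G}\to M\times[0,1]$ eliminates any smoothness bookkeeping, and the ``path-independence'' step is made precise by showing directly (via a short six-term exact sequence argument with contractible kernels) that $K(\ev_{(0,0)})$ is an isomorphism from $K_0(C^*\dnc(\mathbb{G},M\times[0,1]))$ onto $K_0(C_0(\mathfrak{g}^*))$.
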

\begin{proof}
One has a bundle of Carnot Lie groups over $M\times [0,1]^2$ given by $\dnc(\mathbb{G},M\times [0,1])$. One has evaluation maps $ev_{L}$ from $C^*\dnc(\mathbb{G},M\times [0,1])$ to the restriction of $C^*\dnc(\mathbb{G},M\times [0,1])$ to any closed $L\subseteq [0,1]^2,$ hence an exact sequence $$0\to \ker(\ev_{[0,1]\times\{0\}})\to \ker(\ev_{(0,0)})\xrightarrow{\ev_{[0,1]\times\{0\}}} C_0(\mathfrak{g}^*)\otimes C_0(]0,1])\to 0.$$
The $C^*$-algebra $\ker( \ev_{[0,1]\times\{0\}})$ is isomorphic to $C^*\mathbb{G}\otimes C_0(]0,1])$. Hence $\ker(\ev_{[0,1]\times\{0\}})$ and $C_0(\mathfrak{g}^*)\otimes C_0(]0,1])$ are contractible. Therefore by the $6$ term exact sequence $K_*(\ker(\ev_{(0,0)}))=0$. It follows that $$K(\ev_{(0,0)}):K_0(C^*\dnc(\mathbb{G},M\times [0,1]))\to K_0(C_0(\mathfrak{g}^*))$$ is an isomorphism. If $x\in K_0(C^*\dnc(\mathbb{G},M\times [0,1]))$ then the two compositions of the diagram send $K(\ev_{(0,0)})(x)$ to $K(\ev_{(1,1)})(x).$
\end{proof}
\end{proof}
\begin{rem}If $E_0=E_1$ and $D_1$ is symmetric, the operator $$D_1\# D_2:=\begin{pmatrix}i_N(D_1)\otimes Id_{F_0} & Id_{E}\otimes i_H(D_2^t)\\Id_{E}\otimes i_H(D_2) & -i_N(D_1)\otimes Id_{F_1}\end{pmatrix}\in\mathcal{U}_k(\mathfrak{g},E\otimes F)$$ is symmetric. \Cref{main thm on ind crossed product} still holds in this case with an equality at the level of $K_1(C^*G)$. The proof is the same.
\end{rem}

\section{Inverting Connes-Thom isomorphism }\label{index theorem}\label{sect Index thm}
\subsection{The main Construction}
\paragraph{The dual of a Carnot group.} Let $G$ be an $n$-step Carnot group. Kirillov's theory shows that for a Carnot group $\hat{G}=\mathfrak{g}^*/G.$ Since $\mathfrak{g}_n\subseteq \mathcal{Z}(\mathfrak{g})$, the co-adjoint action of $G$ on $\mathfrak{g}^*=\oplus_{i=1}^n\mathfrak{g}_i^*$ is trivial on the last component.

\begin{prop}\label{non deg group prop}If $\mathfrak{g}_n=\R$ and for each $i\in \{1,\cdots,n-1\}$, the map $$[\cdot,\cdot]:\mathfrak{g}_i\times \mathfrak{g}_{n-i}\to \mathfrak{g}_n=\R$$ is nondegenerate, then $$\hat{G}=\R^*\sqcup \left(\oplus_{i=1}^{n-1}\mathfrak{g}^*_i\right)/G$$
\end{prop}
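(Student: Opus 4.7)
The plan is to apply Kirillov's theorem identifying $\hat G$ with the orbit space $\mathfrak{g}^*/G$ of the coadjoint action, and then decompose this orbit space according to the projection $\pi_n\colon \mathfrak{g}^*\to \mathfrak{g}_n^* = \R$. As already observed, the centrality of $\mathfrak{g}_n$ makes $\pi_n$ $G$-equivariant with trivial action on the target, so coadjoint orbits with $\pi_n = 0$ are exactly the orbits lying in $\oplus_{i=1}^{n-1}\mathfrak{g}_i^*$ and they contribute the summand $(\oplus_{i=1}^{n-1}\mathfrak{g}_i^*)/G$. It remains to prove that for each $l_0 \in \R^*$ the fiber $\pi_n^{-1}(l_0)$ is a single coadjoint orbit, which will account for the $\R^*$ factor.

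Fix $l_0 \in \R^*$ and set $l^* := (0,\dots,0,l_0) \in \mathfrak{g}^*$. The first step is to compute the Kirillov form $B_{l^*}(X,Y) = l^*([X,Y]) = l_0\,\pi_n([X,Y])$. Writing $X = \sum_j X_j$ with $X_j \in \mathfrak{g}_j$, this form vanishes for every $Y$ iff $[X_j,\mathfrak{g}_{n-j}] = 0$ for each $j=1,\dots,n-1$, which by the nondegeneracy hypothesis forces $X_j = 0$ for those $j$. Hence the infinitesimal stabilizer of $l^*$ equals $\mathfrak{g}_n$, so the coadjoint orbit of $l^*$ has dimension $\dim(\mathfrak{g}) - 1$, matching the dimension of the affine fiber $\pi_n^{-1}(l_0) = l^* + \oplus_{i=1}^{n-1}\mathfrak{g}_i^*$; in particular the orbit is open in that fiber.

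To upgrade openness to equality I would construct explicitly an element of $G$ carrying an arbitrary $l$ with $\pi_n(l) = l_0$ to $l^*$, by successively killing the components of $l$ in grades $n-1, n-2, \dots, 1$, in this order. At step $k \in \{1,\dots,n-1\}$, assuming the components of grades $n-1,\dots,n-k+1$ already vanish, apply $\Ad^*_{\exp(X_k)} = \exp(\ad^*_{X_k})$ for a well-chosen $X_k \in \mathfrak{g}_k$. Two grading observations make the step work: since $\ad^*_{X_k}$ lowers the grading by $k$ on $\mathfrak{g}^*$, every power $(\ad^*_{X_k})^m$ with $m \geq 1$ lands in grades $\leq n-k$, so components of grade $>n-k$ (including $l_0$ itself) are fixed by $\exp(\ad^*_{X_k})$; and the grade-$(n-k)$ component is modified only by the first-order term (higher powers would require input of grade $\geq n+k$, which is zero), producing the correction $-l_0\,[X_k,\cdot]|_{\mathfrak{g}_{n-k}} \in \mathfrak{g}_{n-k}^*$. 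The nondegeneracy hypothesis together with $l_0 \ne 0$ makes the linear map $\mathfrak{g}_k \to \mathfrak{g}_{n-k}^*$, $X_k \mapsto -l_0\,[X_k,\cdot]|_{\mathfrak{g}_{n-k}}$, an isomorphism, so $X_k$ can be chosen to annihilate the grade-$(n-k)$ component. After $n-1$ steps the resulting element is $l^*$.

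The main obstacle to overcome is orchestrating the induction so that later steps do not re-introduce nonzero entries in the grades already cleared: top-down killing combined with the fact that $\ad^*$ strictly lowers the grade is exactly what ensures this. The nondegeneracy hypothesis is used in precisely two places: once to identify the stabilizer of $l^*$ with $\mathfrak{g}_n$, and then once per inductive step to guarantee surjectivity of the map $\mathfrak{g}_k \to \mathfrak{g}_{n-k}^*$ used to adjust the grade-$(n-k)$ component.
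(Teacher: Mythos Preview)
Your proposal is correct and follows essentially the same approach as the paper: both argue that for $l_0\neq 0$ one can carry any $(l_1,\dots,l_{n-1},l_0)$ to $(0,\dots,0,l_0)$ by acting successively with elements of $\mathfrak{g}_1,\mathfrak{g}_2,\dots,\mathfrak{g}_{n-1}$, each time using nondegeneracy to kill the component in grade $n-k$ while leaving the already-cleared higher grades untouched. The stabilizer/dimension computation you include as a first step is a reasonable sanity check but is not needed for the argument, since your explicit inductive construction already establishes transitivity on the fiber directly; the paper simply omits it.
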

\begin{proof}
As remarked before the proposition, it is enough to prove that for every $t\neq 0$, an element $(l_1,\cdots,l_{n-1},t)\in \mathfrak{g}^*$ contains $(0,\cdots,0,t)$ in its $G$-orbit. It follows from the Baker-Campbell-Hausdorff formula that if $y\in \mathfrak{g}_k$ for $k\in  \{1,\cdots,n-1\}$, then $$\mathrm{Ad}_y(l_1,\cdots,l_{n-1},t)=(l_1',\cdots,l_{n-k-1}',l_{n-k}+t[y,\cdot],l_{n-k+1},\cdots,l_{n-1},t)$$ for some $l_i'\in \mathfrak{g}_i^*.$ The proposition is then clear.
\end{proof}
\paragraph{The construction of $\bar{G}$.} Let $G$ be an $n$-step Carnot group. The crossed product $\mathfrak{g}^*\rtimes_{\Ad^*} G$ is an $n$-step Carnot group when given the grading $$\oplus_{i=1}^n\mathfrak{g}_i\oplus \mathfrak{g}^*_{n-i+1}.$$ We will define a central extension of the group $\mathfrak{g}^*\rtimes_{\Ad^*} G$. Let $\bar{G}$ be the $(n+1)$-Carnot group whose Lie algebra is equal to $\bar{\mathfrak{g}}=\left(\oplus_{i=1}^n\mathfrak{g}_i\oplus \mathfrak{g}^*_{n-i+1}\right)\oplus \R Z,$where $Z$ is an element of degree $n+1$. The Lie bracket is defined by the formulas \begin{align*}
&[Z,x]_{\bar{\mathfrak{g}}}=0,&\forall x\in \bar{\mathfrak{g}}\\
&[x,y]_{\bar{\mathfrak{g}}}=[x,y]_{\mathfrak{g}},&\forall x\in \mathfrak{g},\;y\in\mathfrak{g}\\
&[x,y]_{\bar{\mathfrak{g}}}=0&\forall x\in \mathfrak{g}^*,\; y\in \mathfrak{g}^*\\
&[x,y]_{\bar{\mathfrak{g}}}=0& \forall x\in \mathfrak{g}_{i}^*,\;y\in \mathfrak{g}_{j},\; i<j\\
&[x,y]_{\bar{\mathfrak{g}}}=j\langle x,y\rangle Z,&\forall x\in \mathfrak{g}_{j}^*,\;y\in \mathfrak{g}_{j}\\
&\langle [x,y]_{\bar{\mathfrak{g}}},z\rangle=\langle x,[y,z]_\mathfrak{g}\rangle&\forall x\in \mathfrak{g}_i^*,y\in \mathfrak{g}_j,\;z\in \mathfrak{g}_{i-j},\; i>j
\end{align*}where $[\cdot,\cdot]_{\mathfrak{g}}$ is the Lie bracket on $\mathfrak{g}$ and $i,j\in \{1,\cdots,n\}$. The Lie bracket $[y,x]:=-[x,y]$ in all the above cases.
\begin{prop}The space $\bar{\mathfrak{g}}$ is the Lie algebra of a Carnot Lie group.
\end{prop}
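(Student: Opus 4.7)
The plan is to check directly that $\bar{\mathfrak{g}}$, with the bracket specified by the six defining formulas, is a graded nilpotent Lie algebra. Once this is established, $\bar{G}$ is then obtained by the standard Baker--Campbell--Hausdorff construction applied to $\bar{\mathfrak{g}}$, and the $\N$-grading with $\bar{\mathfrak{g}}_i = \mathfrak{g}_i\oplus \mathfrak{g}^*_{n+1-i}$ (for $1\le i\le n$) and $\bar{\mathfrak{g}}_{n+1}=\R Z$ turns $\bar G$ into a Carnot group of step $n+1$.

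First I would record that the six formulas are consistent with this grading: for each pair of homogeneous summands $U,V$, the prescribed value of $[U,V]$ lies in the correct graded piece, and the formula forces $[\mathfrak{g}_i^*,\mathfrak{g}_j]=0$ when $i<j$ precisely because the target would sit in degree $>n+1$. This compatibility simultaneously gives nilpotence (the lower central series terminates at step $n+2$) as soon as Jacobi is verified. Antisymmetry is built into the definition.

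The heart of the argument is the Jacobi identity $[x,[y,z]]+[y,[z,x]]+[z,[x,y]]=0$, which by multilinearity I would check on homogeneous triples $(x,y,z)$ with each entry in $\mathfrak{g}\cup \mathfrak{g}^*\cup \R Z$. If any entry is proportional to $Z$, the identity is trivial because $Z$ is central. If all three entries are in $\mathfrak{g}^*$, every bracket vanishes. If all three are in $\mathfrak{g}$, one invokes Jacobi in $\mathfrak{g}$. The case with two entries in $\mathfrak{g}^*$ and one in $\mathfrak{g}$ is also easy: the bracket of two elements of $\mathfrak{g}^*$ is zero, and for the two remaining cyclic terms the inner bracket lies either in $\mathfrak{g}^*$ or in $\R Z$, and in both cases the outer bracket with an element of $\mathfrak{g}^*$ vanishes.

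The only substantive case is one element in $\mathfrak{g}^*$ and two in $\mathfrak{g}$: say $x\in \mathfrak{g}_a^*$, $y\in \mathfrak{g}_b$, $z\in \mathfrak{g}_c$. All three Jacobi terms land in $\bar{\mathfrak{g}}_{n+1-a+b+c}$, so I would split by the sign of $a-b-c$. When $a<b+c$, the target sits in degree $>n+1$ and everything is zero. When $a=b+c$, the terms lie in $\R Z$; using the formula $[u,v]=j\langle u,v\rangle Z$ for $u\in\mathfrak{g}_j^*,v\in\mathfrak{g}_j$, together with the pairing formula for $[x,y]$ and $[x,z]$, a short calculation gives the coefficients $(b+c)-b-c=0$. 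When $a>b+c$, the terms lie in $\mathfrak{g}_{a-b-c}^*$, and I would test against an arbitrary $w\in\mathfrak{g}_{a-b-c}$. Iterated application of the defining relation $\langle [u,v]_{\bar{\mathfrak{g}}},w\rangle=\langle u,[v,w]_{\mathfrak{g}}\rangle$ turns the three Jacobi terms into the pairing of $x$ against a cyclic combination of nested $\mathfrak{g}$-brackets of $y,z,w$, which vanishes by Jacobi in $\mathfrak{g}$.

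The main obstacle is precisely this last case: keeping track of the integer coefficient $j$ in $[\mathfrak{g}_j^*,\mathfrak{g}_j]=j\langle\cdot,\cdot\rangle Z$ in the resonant sub-case $a=b+c$, and correctly peeling off the pairings in the sub-case $a>b+c$ so that Jacobi in $\mathfrak{g}$ can be applied. Everything else is bookkeeping.
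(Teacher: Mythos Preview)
Your proposal is correct and follows essentially the same approach as the paper: both reduce to verifying the Jacobi identity on homogeneous triples, dispose of the cases involving $Z$, three $\mathfrak{g}^*$-entries, three $\mathfrak{g}$-entries, and two $\mathfrak{g}^*$-entries trivially, and then handle the one nontrivial case $x\in\mathfrak{g}_a^*$, $y\in\mathfrak{g}_b$, $z\in\mathfrak{g}_c$ by splitting according to the sign of $a-(b+c)$, with the resonant sub-case yielding the coefficient cancellation and the $a>b+c$ sub-case reducing to Jacobi in $\mathfrak{g}$ after pairing against a test vector $w\in\mathfrak{g}_{a-b-c}$.
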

\begin{proof}
Antisymmetry  and the identity $[\bar{\mathfrak{g}}_i,\bar{\mathfrak{g}}_j]\subseteq \bar{\mathfrak{g}}_{i+j}$ are part of the definition. So the only thing that one needs to check is the Jacobi identity $$[[x,y],z]+[[y,z],x]+[[z,x],y]=0.$$ Since $Z\in \mathcal{Z}(\bar{\mathfrak{g}})$, we will suppose that $x,y,z\in \oplus_{i=1}^{n}\bar{\mathfrak{g}}_{i}.$ We will divide this by linearity into cases\begin{itemize}
\item if $x\in \mathfrak{g}_i, y\in\mathfrak{g}_j,z\in\mathfrak{g}_k$, then the Jacobi identity follows from the same identity on $\mathfrak{g}$.
\item if $x\in \mathfrak{g}^*_i, y\in\mathfrak{g}_j,z\in\mathfrak{g}_k$, then if $j+k>i$, then each term is zero. If $j+k=i$, then one has  \begin{align*}
[[x,y],z]+[[y,z],x]+[[z,x],y]&=k\langle x,[y,z]\rangle-i\langle x,[y,z]\rangle+j\langle x,[y,z]\rangle\\&=0.
\end{align*}
If $i>j+k$, then for $w\in \mathfrak{g}_{i-j-k}$, one has \begin{align*}
&\langle [[x,y],z],w\rangle+\langle [[y,z],x],w\rangle+\langle [[z,x],y],w\rangle=\\&\langle x,[y,[z,w]]\rangle +\langle x,[w,[y,z]]\rangle+\langle x,[z,[w,y]]\rangle=0
\end{align*}
\item if $x\in \mathfrak{g}^*_i, y\in\mathfrak{g}^*_j,z\in\mathfrak{g}_k$, then each of the terms is zero.
\item if $x\in \mathfrak{g}^*_i, y\in\mathfrak{g}^*_j,z\in\mathfrak{g}^*_k$, then each of the terms is zero.\qedhere
\end{itemize}
\end{proof}
\begin{rem}Let $N\subseteq \bar{G}$ be the normal abelian subgroup whose Lie algebra is $\mathfrak{g}^*\oplus \R Z$. It is clear that $\bar{G}=N\rtimes G.$
\end{rem}

\subsection{Index theorem, $K_0$ statement}\label{index sect 3} Let $G\to M$ be a bundle of Carnot Lie groups. It is clear that the construction of $\bar{G_x}$ for each $x\in M$ extends fiber by fiber to yield a bundle of Carnot Lie groups over $M$ denoted by $\bar{G}$.

For $(0,\cdots,0,1)\in \bar{\mathfrak{g}}^*$, it is clear that $\mathfrak{g}^*\oplus \R Z$ is a polarising Lie algebra. See \cite{MR0133406,MR1070979} for the definition of polarising Lie subalgebras. Furthermore a natural choice of a complementary Lie subalgebra is given by $\mathfrak{g}.$ The same holds for $(0,\cdots,0,-1)\in\bar{\mathfrak{g}}^* .$ By Kirillov theory, one has a bundle of irreducible unitary representations $\pi^{\pm}_x:\bar{G_x}\to \mathcal{U}(L^2\mathfrak{g})$ associated to $(0,\cdots,0,\pm 1)$.

 By \cref{non deg group prop}, one has the exact sequence \begin{equation}\label{exactseq Gbar}
   0\to C_0(\R^*)\otimes \mathcal{K}(L^2\mathfrak{g})\to C^*\bar{G}\to C^*\mathfrak{g}^*\rtimes_{\mathrm{Ad}^*}G\to 0.
\end{equation}
In what follows it will be helpful to think of $C^*\bar{G}$ as an fiber over $\R$ whose fiber over $0$ is equal to $C^*\mathfrak{g}^*\rtimes_{\mathrm{Ad}^*}G$ and whose fiber over $t\neq 0$ is equal to $\mathbb{K}(L^2\mathfrak{g}).$ 

Let $V,W\to M$ be $\Z/2\Z$ graded vector bundles, $$D_1\in \Gamma(\mathcal{U}_k(\mathfrak{g}^*\oplus \R,W_0,W_1)), D_2\in \Gamma(\mathcal{U}_k(\mathfrak{g},V_0,V_1)), $$ be operators that satisfy the Rockland condition with $k\in \N$. Here $\mathfrak{g}^*\oplus \R$ is the commutative Lie algebra. Let $$\psi\in \mathrm{Aut}(\mathfrak{g}^*\oplus \R),\; \psi(l,t)=(l,-t).$$
The operator $\psi(D_1)$ clearly satisfies the Rockland condition. By \cref{main thm on ind crossed product}, there exists $\epsilon>0$ such that for $c\in ]0,\epsilon[$, $D_1\# cD_2$ and $\psi(D_1)\# cD_2$ satisfy the Rockland condition on $\bar{G}=(\mathfrak{g}^*\oplus \R)\rtimes G.$ To simplify the notation, we will suppose that $\epsilon>1, c=1$.

For each $x\in M$, one has the operators $$\pi_x^+(D_1\# D_2),\pi^+_x(\psi(D_1)\# D_2):L^2\mathfrak{g}\otimes (V\otimes W)_0\to L^2\mathfrak{g}\otimes (V\otimes W)_1.$$ \Cref{compact resolvent} implies that each is invertible.

\begin{prop}\label{diff is compact}The operator $\pi_x^+(D_1\# D_2)\pi_x^+(\psi(D_1)\# D_2)^{-1}$ is an invertible bounded and is equal to $1$ modulo compact operators.
\end{prop}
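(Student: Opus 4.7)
My plan is as follows. Write $A := \pi_x^+(D_1 \# D_2)$ and $B := \pi_x^+(\psi(D_1) \# D_2)$. Since $D_1\#D_2$ and $\psi(D_1)\#D_2$ satisfy the Rockland condition on $\bar{G}$ (by the choice of $c$) and $\pi_x^+$ is irreducible, \cref{compact resolvent} gives that $A, B$ are invertible with compact inverses; hence $AB^{-1}$ is automatically invertible, and the statement reduces to showing that $AB^{-1}-1 = (A-B)B^{-1}$ is compact on $L^2(\mathfrak{g}) \otimes (V \otimes W)$.

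First I would describe $\pi^+_x$ via Kirillov theory. Since $\mathfrak{n} := \mathfrak{g}^* \oplus \R Z$ is an abelian ideal of $\bar{\mathfrak{g}}$ that polarizes $Z^* \in \bar{\mathfrak{g}}^*$, on $L^2(\mathfrak{g}) \cong L^2(\bar{G}/N)$ the central generator $Z$ acts as the scalar $2\pi i$, each $X \in \mathfrak{g}^*_j$ acts by multiplication by the polynomial $g \mapsto -2\pi i \langle Z^*, \Ad_{g^{-1}}(X)\rangle$ (homogeneous of weighted Carnot degree $j$ in $g$, since $\deg_{\bar{\mathfrak{g}}}(X) = n-j+1$ and $\deg_{\bar{\mathfrak{g}}}(Z) = n+1$), while $Y \in \mathfrak{g}_j$ acts as a left-invariant Carnot vector field of order $j$. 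In particular $\pi^+_x(i_N(D_1))$ is a polynomial multiplication operator on $L^2(\mathfrak{g})$.

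Next I would analyze $A - B$. Because $\psi$ fixes $\mathfrak{g}^*$, flips $Z$, and leaves the $D_2$-entries of $D_1 \# D_2$ unchanged, $A - B$ is block-diagonal with entries $\pi^+_x(i_N(D_1 - \psi(D_1))) \otimes \mathrm{Id}$ and $\pi^+_x(i_N(D_1^t - \psi(D_1)^t)) \otimes \mathrm{Id}$. Only monomials $\partial_\xi^{\alpha'} \partial_Z^{\alpha_Z}$ with $\alpha_Z$ odd survive in the difference, and under $\pi^+_x$ each becomes $(2\pi i)^{\alpha_Z}$ times a polynomial multiplication of weighted Carnot degree $\sum_j j \alpha'_j$ on $\mathfrak{g}$. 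The homogeneity constraint $\sum_j \alpha'_j(n-j+1) + \alpha_Z(n+1) = k$ rewrites as $\sum_j j \alpha'_j = (n+1)(\sum_j \alpha'_j + \alpha_Z) - k$; combined with $\alpha_Z \geq 1$ this forces $\sum_j j \alpha'_j \leq n(k - n - 1)$, strictly less than the maximum $nk$ attained by the $\alpha_Z = 0$ contributions to $\pi^+_x(i_N(D_1))$. Thus $A - B$ has strictly lower weighted order than $A$ or $B$ (by at least $n(n+1)$).

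Finally I would conclude compactness via the appropriate Shubin/Rockland-type pseudo-differential calculus on $L^2(\mathfrak{g})$, in which weight-$j$ polynomial multiplications and weight-$j$ left-invariant vector fields both carry order $j$. In this calculus $B$ is a ``harmonic-oscillator-type'' hypoelliptic operator — the $\psi(D_1)$-part providing polynomial confinement, the $D_2$-part providing differential ellipticity — so $B^{-1}$ simultaneously gains derivatives and polynomial weight, making $(A - B) B^{-1}$ of strictly negative order and hence compact via the weighted Sobolev embedding. The main obstacle is the rigorous setup of this Shubin-type Rockland calculus adapted to the Schr\"odinger-type representation $\pi^+_x$, and the verification that $B^{-1}$ indeed gains enough weight to absorb the polynomial growth of $A - B$; this should follow from the analytic estimates developed in \cref{Sect Symbols}, where the Rockland condition is translated into compact-resolvent statements in the appropriate weighted Sobolev spaces.
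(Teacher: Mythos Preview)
Your core observation --- that $A-B$ only involves monomials carrying at least one factor of $Z$ --- is exactly the point, and your degree count is correct. But the final step has a genuine gap: the ``Shubin-type Rockland calculus on $L^2(\mathfrak{g})$'' you invoke is not developed in \cref{Sect Symbols}, which treats only the symbol calculus on the group $C^*$-algebras $C^*G$, $C^*\bar G$, never a calculus on the representation space. Worse, a naive such calculus cannot match $\bar G$-orders: $Z$ has $\bar{\mathfrak g}$-degree $n+1$ while $\pi^+_x(Z)$ is a nonzero scalar, hence order $0$. This very mismatch is what makes the argument work, and it also prevents you from asserting that $B^{-1}$ gains polynomial weight of the precise order needed to absorb $A-B$; ``compact resolvent'' alone does not give that.

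The paper avoids this by never leaving the $\bar G$-symbol calculus. It takes a parametrix $S\in S^{-k}(\bar G)$ for $\psi(D_1)\# D_2$, shows $AB^{-1}\equiv\pi^+_x\big((D_1\# D_2)S\big)$ modulo compacts, and then observes that $(D_1\# D_2)S-1$ is an order-$0$ symbol on $\bar G$ whose pushforward under $\phi:\bar G\to\mathfrak g^*\rtimes_{\Ad^*}G$ (the map killing $Z$) is of order $-1$, precisely because $\phi(D_1\# D_2)=\phi(\psi(D_1)\# D_2)$. On the Euclidean Fourier side, with $t$ dual to $Z$, this forces $(D_1\# D_2)S-1=(\text{order }-1)+t\cdot L$ with $L$ of order $-1$; since $\pi^+_x$ only reads the slice $t=1$, the whole thing is $\pi^+_x$ of an order $-1$ symbol, hence lies in $\pi^+_x(C^*\bar G)$ and is compact by Dixmier's theorem on irreducible representations of nilpotent groups. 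Note that the paper uses only the qualitative fact that the difference is divisible by $Z$; your sharper bound $n(k-n-1)$ on the polynomial degree is correct but not needed.
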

\begin{proof}
To simplify the notation, I will not use $Op$ as in \cref{Sect Symbols}. By \cref{Parametrix symbols}, there exists a symbol $S$ of order $-k$ such that $$S(\psi(D_1)\# D_2)- 1,(\psi(D_1)\# D_2)S-1$$ are of order $-1$. One has \begin{align*}
   \pi^+_x((D_1\#D_2)S)-\pi^+_x(D_1\#D_2) \pi_x^+(\psi(D_1)\# D_2)^{-1}\\=\pi^+\big((D_1\#D_2)(S(\psi(D_1)\# D_2)- 1)\big) \pi_x^+(\psi(D_1)\# D_2)^{-1}
\end{align*}
The first operator is bounded by \cref{symbols bounded}, and the third is compact by \cref{symbols bounded} and \cref{compact resolvent}. It follows that $\pi_x^+(D_1\# D_2)\pi_x^+(\psi(D_1)\# D_2)^{-1}$ is invertible and bounded and is equal to $\pi_x^+((D_1\# D_2)S)$ modulo compact operators.

Let $\phi:\bar{G}\to \mathfrak{g}^*\rtimes_{\mathrm{Ad}^*} G$ be the natural projection. Since $$\phi(D_1\# D_2)=\phi(\psi(D_1)\# D_2),$$ it follows that $$(D_1\# D_2)S-1$$ is a symbol of order $0$ whose pushforward by $\phi$ is of order $-1$. If we denoted by $F$ the Euclidean Fourier transfom on $\bar{G}$ and use $(\xi,y,t)\in \mathfrak{g}^*\times \mathfrak{g}\times \R$ as coordinates for $\bar{g}^*$, then by \cref{Fourier transform}, $$F((D_1\# D_2)S-1)$$ is a function which is homogeneous of degree $0$ at infinity and of degree $-1$ when restricted to $(\xi,y,0).$ It follows from the fundamental theorem of calculus $$F((D_1\# D_2)S-1)(\xi,y,t)=F((D_1\# D_2)S-1)(\xi,y,0)+tL(\xi,y,t),$$ where both $L,F((D_1\# D_2)S-1)(\xi,y,0)$ are symbols of order $-1$.

It is straightforward to see that $\pi^+_x$ applied to a symbol only depends on the restriction of the Euclidean Fourier transform of the symbol to $\mathfrak{g}^*\times \mathfrak{g}\times \{1\}.$ It follows that $$\pi_x^+((D_1\# D_2)S-1)$$ agrees with $\pi^+_x$ of a symbol of order $-1$. The proposition then follows from \cref{symbols bounded} and a classical result of Dixmier which says that irreducible representations of nilpotent groups maps the $C^*$-algebra into compact operators, see \cite[13.11.12]{MR0458185}.
\end{proof}
\begin{rem}\label{rem aux}We actually proved that $(D_1\# D_2)S-1$ is locally (in the $\R$-direction of $C^*\bar{G}$) compact as an operator on $C^*$-modules.
\end{rem}

\Cref{diff is compact} implies that the family of operators $$\pi_x^+(D_1\# D_2)\pi_x^+(\psi(D_1)\# D_2)^{-1}:L^2\mathfrak{g}\otimes (V\otimes W)_1\to L^2\mathfrak{g}\otimes (V\otimes W)_1$$ as $x$ varies defines an element in $K^1(\mathcal{K}(L^2\mathfrak{g})).$ This group is equal to $K^1(M)$ by Morita equivalence.
\begin{theorem}\label{index theorem in sect 3}
The following identity holds
\begin{equation}
\Ind_{\bar{G}}^{-1}([D_1\# D_2])=\mathrm{Th}[\pi_x^+(D_1\# D_2)\pi_x^+(\psi(D_1)\# D_2)^{-1}],
\end{equation}where $\mathrm{Th}:K^1(M)\to K^0(\bar{\mathfrak{g}}^*)$ is the topological Thom isomorphism the spin structure on $\bar{\mathfrak{g}}=\mathfrak{g}\oplus \mathfrak{g}^*\oplus \R$.
\end{theorem}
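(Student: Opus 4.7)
The plan is to apply a Van-Erp-style trick, adapted via the automorphism $\psi$, to exhibit $[D_1\# D_2]\in K_0(C^*\bar{G})$ as the image of an explicit $K^1(M)$-class through the ideal in the exact sequence (\ref{exactseq Gbar}), and then to identify the resulting composition with the topological Thom map.

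First, the quotient map $\phi:\bar{G}\to\mathfrak{g}^*\rtimes_{\Ad^*}G$ is a surjective Carnot homomorphism killing the central $\R Z$ factor, so \cref{cor connes-Thom} gives that $K_*(C^*\phi)$ is zero. The six-term exact sequence associated to (\ref{exactseq Gbar}) then forces $K_0(C_0(\R^*)\otimes\mathcal{K}(L^2\mathfrak{g}))\to K_0(C^*\bar{G})$ to be surjective, and by Morita equivalence $K_0$ of this ideal is identified with $K^1(M)\oplus K^1(M)$, one copy per connected component of $\R^*$. Thus $[D_1\# D_2]$ admits a lift to $K^1(M)\oplus K^1(M)$, unique modulo the image of the boundary coming from $K_1(C^*(\mathfrak{g}^*\rtimes G))$.

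To pin down this lift, I would use both $D_1\# D_2$ and $\psi(D_1)\# D_2$ simultaneously. Because $\phi$ kills the $\R Z$-direction on which $\psi$ acts as $-1$, one has $\phi(D_1\# D_2)=\phi(\psi(D_1)\# D_2)$ at the operator level, so their K-theory classes coincide in the quotient. By \cref{diff is compact}, the family $U^\pm_x:=\pi^\pm_x(D_1\# D_2)\,\pi^\pm_x(\psi(D_1)\# D_2)^{-1}$ is invertible and $1$-modulo-compacts, defining classes $[U^\pm]\in K^1(M)$ that represent the difference lift on the two $\R_\pm$ components. The $\psi$-symmetry reverses the central character (so that $\pi^+\circ\psi=\pi^-$ on the $N$-part), and together with an analogous analysis of $[\psi(D_1)\# D_2]$ on its own this lets me conclude that, modulo the boundary ambiguity, $[D_1\# D_2]$ itself is represented by the $\R_+$-piece $[U^+]\in K^1(M)$ alone.

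Finally, I would identify the composition
\[
K^1(M)\hookrightarrow K_0\bigl(C_0(\R_+)\otimes\mathcal{K}(L^2\mathfrak{g})\bigr)\to K_0(C^*\bar{G})\xrightarrow{\Ind_{\bar{G}}^{-1}}K^0(\bar{\mathfrak{g}}^*)
\]
with the topological Thom map. The Schrödinger/Bargmann-Fock realization of $\pi^+$ on $L^2\mathfrak{g}$ presents it as the spinor module for the canonical complex structure on $\mathfrak{g}\oplus\mathfrak{g}^*$, with the extra $\R$-summand providing the spin completion on $\bar{\mathfrak{g}}=\mathfrak{g}\oplus\mathfrak{g}^*\oplus\R$; inclusion of the ideal then corresponds to multiplication by the associated Thom class. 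The main obstacle will be the preceding step: carefully tracking the $\psi$-symmetry and controlling the boundary ambiguity so as to promote the difference lift $[U^\pm]$ to a lift of $[D_1\# D_2]$ itself. This is the essential point where Van-Erp's trick has to be adapted here, since his anti-group-automorphism of the Heisenberg group no longer works for $\bar{G}$.
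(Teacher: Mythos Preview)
Your overall strategy matches the paper's---use the exact sequence~\eqref{exactseq Gbar}, produce a preimage of $[D_1\# D_2]$ via the $\psi$-trick, and identify the resulting composition with the Thom map---but both of the substantive steps are left as gaps, and the way you phrase them does not yet point to how they close.

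For the first step, you correctly note that a lift of $[D_1\# D_2]$ to $K_0$ of the ideal exists but is only determined ``modulo the boundary ambiguity,'' and then you assert that the $\psi$-symmetry together with ``an analogous analysis of $[\psi(D_1)\# D_2]$'' pins it down to the $\R_+$-piece $[U^+]$. This is the whole content of the argument, and symmetry alone does not deliver it: knowing that $[U^\pm]$ represent the \emph{difference} $[D_1\# D_2]-[\psi(D_1)\# D_2]$ says nothing about $[D_1\# D_2]$ individually without an independent computation of $[\psi(D_1)\# D_2]$, which is just as hard. The paper bypasses this entirely by constructing a concrete operator whose class vanishes: set $\tilde D_1$ to agree with $D_1$ for $t\le 0$ and with $\psi(D_1)$ for $t>0$ (i.e.\ $F(\tilde D_1)(\xi,t)=F(D_1)(\xi,-|t|)$). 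Since $(\xi,t)\mapsto(\xi,-|t|)$ is null-homotopic, $[\tilde D_1\# D_2]=0$. Taking a parametrix $S$ for $\tilde D_1\# D_2$, the symbol $(D_1\# D_2)S$ has the same $K$-class as $D_1\# D_2$, is $\mathrm{Id}$ modulo compacts on the $]-\infty,0]$ part, and on the $]0,+\infty[$ part is exactly $[U^+]$ after suspension. This directly exhibits $\partial([U^+])=[D_1\# D_2]$ with no ambiguity to resolve.

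For the second step, your appeal to the ``Schr\"odinger/Bargmann--Fock realization of $\pi^+$ on $L^2\mathfrak g$ as the spinor module'' is a heuristic, not a proof that the composite $K^1(M)\to K_0(C^*\bar G)\xrightarrow{\Ind_{\bar G}^{-1}}K^0(\bar{\mathfrak g}^*)$ is the Thom map. The paper proves this by writing down an explicit family of Carnot brackets on $\bar{\mathfrak g}$ over $M\times[0,1]$ (rescaling by $t^n\alpha_{t^{-1}}$) that deforms $\bar G$ at $t=1$ to the Heisenberg group bundle on $\mathfrak g\oplus\mathfrak g^*\oplus\R$ at $t=0$, and then invokes the already-known commutativity of the diagram for Heisenberg groups from Baum--Van~Erp. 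Without this reduction (or an equivalent computation), the identification with Thom is unsupported.
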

\begin{proof}
The proof is an adaptation of the proof for the Heisenberg group in \cite{MR2680396}. The theorem follows from  \cref{lem 1} and \cref{lem 2} \qedhere
\begin{lem}\label{lem 1}Let $\partial:K_1(\mathcal{K}(L^2\mathfrak{g}))\to K_0(C^*\bar{G})$ be the map obtained from the composition $$K_1(\mathcal{K}(L^2\mathfrak{g}))\xrightarrow{\mathrm{Suspenstion}}K_0(C_0(]0,+\infty[)\otimes \mathcal{K}(L^2\mathfrak{g}))\xrightarrow{K(\mathrm{inclusion})}K_0(C^*\bar{G}).$$ Then the following identity holds $$\partial([\pi_x^+(D_1\# D_2)\pi_x^+(\psi(D_1)\# D_2)^{-1}])=[D_1\# D_2]$$
\end{lem}
\begin{proof}
Let $\tilde{D_1}$ be the symbol on the commutative group $\mathfrak{g}^*\oplus \R$ which is defined by the formula \begin{align*}
F(\tilde{D}_1)(\xi,t)=F(D_1)(\xi,-|t|)= \begin{cases}F(\psi(D_1))(\xi,t),\quad t>0\\F(D_1)(\xi,t),\quad t\leq 0
\end{cases},\quad \xi \in \mathfrak{g},
\end{align*}
where here $F$ means the Euclidean Fourier transform of differential operators. Since $D_1$ satisfies the Rockland condition, the operator $\tilde{D}_1$ satisfies the Rockland condition as well. Furthermore $[\tilde{D}_1]\in K_0(C^*\mathfrak{g}^*\oplus \R)=K_0(C_0(\mathfrak{g}\oplus \R))$ is equal to $0$ because the map $(\xi,t)\to (\xi,-|t|)$ is nullhomotopic.

By \cref{main thm on ind crossed product}, the operator $\tilde{D}_1\# D_2$ satisfy Rockland condition and its class in $K$-theory is trivial.\footnote{The operator $\tilde{D}_1\# D_2$ does satisfy the Rockland condition without any need for constants. In all cases it doesn't matter one could choose $\epsilon$ small enough so that $\tilde{D}_1\# c D_2$ satisfies the Rockland condition as well.} 
Let $S$ be a parametrix for $\tilde{D_1}\# D_2$. Consider the operator $(D_1\# D_2)S.$ This operator satisfies the Rockland condition, and its class in $K$-theory is equal to the class of $D_1\# D_2.$ The restriction of the  operator $(D_1\# D_2)S$ to $]-\infty,0]$ part of $C^*\bar{G}$ is equal to $\mathrm{Id}$ modulo $C^*$-compact operator. Also $(D_1\# D_2)S$ being a symbol of order $0$, implies that $$\alpha_t((D_1\# D_2)S)-(D_1\# D_2)S$$ is $C^*$-compact for every $t\in \R^+_*.$ The lemma immediately follows from the definition of the suspension map in $K$-theory. See \cite[lemma 6.2.1 and the proof of theorem 6.3.1]{MR3261009}.
\end{proof}
\begin{lem}\label{lem 2}
The following diagram commutes \begin{equation}\label{diagram commut thm sect 3}
 \begin{tikzcd}K^1(\mathcal{K}(L^2\mathfrak{g}))\arrow[r,"\partial"]\arrow[d,"\mathrm{Morita}"]& K_0(C^*\bar{G})\\K^1(M)\arrow[r,"\mathrm{Thom}"]& K^0(\bar{\mathfrak{g}}^*)\arrow[u,"\Ind_{\bar{G}}"]
\end{tikzcd}
\end{equation}
\end{lem}
\begin{proof}
Let $\tilde{G}$ be a bundle of Carnot groups on $M\times [0,1]$ whose Lie algebra at $(x,t)$ is equal to $\bar{\mathfrak{g}}_x$ with the Lie bracket $$[g,h]:=t^{n}\alpha_{t^{-1}}([g,h]).$$

For $t\neq 0$, $\tilde{G}_{x,t}$ is isomorphic to $\bar{G}_x$ by the map $$g\to t^{-n}\alpha_{t^{n}}(g).$$
For $t=0$, $\tilde{G}_{x,0}=\mathfrak{g}\oplus\mathfrak{g}^*\oplus \R$ is the Heisenberg group. It follows from \cref{def index map} that one has an index map $K(C^*\tilde{G}_{|M\times \{0\}})\to K(C^*\bar{G}).$ Commutativity of the diagram \ref{diagram commut thm sect 3} follows then from that the same one for the Heisenberg group. See \cite[proposition 4.6.1]{MR3261009}.
\end{proof}
\end{proof}

\begin{rem}\label{Morita equiv remark}
The Morita equivalence $K^1(\mathcal{K}(L^2\mathfrak{g}))=K^1(M)$ can be described as follows : choose any increasing finite dimensional vector bundles $$0\subseteq L_{1}\subseteq L_2\cdots \subseteq L^2\mathfrak{g}\otimes (V\otimes W)_1=\overline{\cup_{i}L_i}.$$ Let $p_n$ be the orthogonal projection onto $L_n$, then for $n$ big enough $$p_n\left(\pi_x^+(D_1\# D_2)\pi_x^+(\psi(D_1)\# D_2)^{-1}\right)p_n\in \mathrm{Aut}(L_n).$$ The element they define in $K^1(M)$ stabilises for $n$ big enough.
\end{rem}
\begin{rem}All statements in section \ref{Rockl sect}, \ref{Connes-Thom sect}, \ref{index theorem} and \cref{Sect Symbols} without exception can be equally well stated for bundles over locally compact spaces with minor modifications.
\end{rem}
\subsection{The $K_1$ statement}
Let $M$ be a compact smooth manifold, $F_0,F_1:M\to Gl(H)$ two families of invertible self-adjoint operators which are continuous for the strong $*$-topology. If one is given a continuous path $$c(t):M\times [0,1]\to Fred_{sa}(H)$$ connecting $F_0$ and $F_1$, then one immeditetly gets an element in $\KK_1(\C,C(M)\otimes C_0(\R))=K^0(M).$ This element is called the (higher) spectral flow of $c(t).$ See \cite{Dai1996} for an alternative definition which doesn't use $\KK$ theory.

 Let us return to the situation of \cref{index sect 3}, but with the additional hypothesis that $W_0=W_1$ and $D_1$ is symmetric. In this case $D_1\#D_2$ defines an element in $K_1(C^*\bar{G})$. Let $F_0,F_1$ be the bounded transforms of $D_1\#D_2$ and $\psi(D_1)\# D_2$. \Cref{diff is compact} implies $F_0$ and $F_1$ are equal modulo compact operators. We will denote by $sf(F_0,F_1)\in K^0(M)$ the spectral flow between $F_0$ and $F_1$ with the path taken being the affine path.
\begin{theorem}\label{index theorem in sect 3 K0}
	The following identity holds
	\begin{equation}
	\Ind_{\bar{G}}^{-1}([D_1\# D_2])=\mathrm{Th}[sf(F_1,F_0)],
	\end{equation}where $\mathrm{Th}:K^1(M)\to K^0(\bar{\mathfrak{g}}^*)$ is the topological Thom isomorphism the spin structure on $\bar{\mathfrak{g}}=\mathfrak{g}\oplus \mathfrak{g}^*\oplus \R$.
\end{theorem}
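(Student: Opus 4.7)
The plan is to follow the proof of \cref{index theorem in sect 3} essentially verbatim, but one $K$-theoretical degree up, by establishing odd analogues of \cref{lem 1} and \cref{lem 2}. First I would define the boundary map
$$\partial: K_0(\mathcal{K}(L^2\mathfrak{g})) \xrightarrow{\mathrm{Suspension}} K_1(C_0(]0,+\infty[) \otimes \mathcal{K}(L^2\mathfrak{g})) \xrightarrow{K(\mathrm{inclusion})} K_1(C^*\bar{G}),$$
with the inclusion coming from the exact sequence \eqref{exactseq Gbar}. Under the Morita identification $K_0(\mathcal{K}(L^2\mathfrak{g})) \cong K^0(M)$, the spectral flow class $sf(F_1,F_0)$ is, by the very definition of higher spectral flow along an affine path (as in \cite{Dai1996}, matched to the $KK$-picture via the finite rank projection description of \cref{Morita equiv remark}), the image of $[F_0,F_1]$ under Morita equivalence. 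The odd analogue of \cref{lem 1} to establish is then the identity $\partial([sf(F_1,F_0)]) = [D_1\#D_2]$ in $K_1(C^*\bar{G})$.

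To prove this, I would reuse the auxiliary symbol $\tilde D_1$ on the commutative group $\mathfrak{g}^* \oplus \R$ defined by $F(\tilde D_1)(\xi,t) = F(D_1)(\xi,-|t|)$. Because $D_1$ is symmetric, $\tilde D_1$ is a symmetric Rockland symbol, and its $K_1$-class vanishes thanks to the nullhomotopy $(\xi,t)\mapsto (\xi,-|t|)$. By \cref{main thm on ind crossed product} (in its symmetric variant noted in the remark preceding \cref{sect Index thm}), $\tilde D_1 \# D_2$ has trivial $K_1$-class, so multiplying $D_1\#D_2$ by a parametrix $S$ of $\tilde D_1\#D_2$ produces a $K_1$-equivalent symbol whose bounded transform is the identity modulo compacts on the $]-\infty,0]$ portion of $C^*\bar{G}$, and whose restriction to $t>0$ agrees with the affine straight-line interpolation between the bounded transforms of $\psi(D_1)\#D_2$ (near $t=0^+$) and $D_1\#D_2$ (at $t=1$), up to compacts, exactly as in the proof of \cref{lem 1} and \cite[lemma 6.2.1]{MR3261009}. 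This reads off $[D_1\#D_2]$ as $\partial([sf(F_1,F_0)])$.

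The analogue of \cref{lem 2} is the commutative diagram
$$
\begin{tikzcd}
K_0(\mathcal{K}(L^2\mathfrak{g})) \arrow[r,"\partial"] \arrow[d,"\mathrm{Morita}"] & K_1(C^*\bar{G}) \\
K^0(M) \arrow[r,"\mathrm{Thom}"] & K^1(\bar{\mathfrak{g}}^*) \arrow[u,"\Ind_{\bar{G}}"]
\end{tikzcd}
$$
which I would verify by the same deformation as in the proof of \cref{lem 2}: the bundle $\tilde G$ of Carnot groups on $M\times [0,1]$ with Lie bracket $t^n\alpha_{t^{-1}}([\cdot,\cdot])$ interpolates between $\bar G$ at $t=1$ and the Heisenberg bundle $\mathfrak{g}\oplus \mathfrak{g}^*\oplus \R$ at $t=0$, and the induced index map intertwines the two compositions. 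This reduces the statement to the corresponding commutativity for the Heisenberg group, which is the $K_1$-shift of \cite[proposition 4.6.1]{MR3261009}, proved by the same argument.

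The main obstacle I anticipate is the bookkeeping of sign and orientation conventions needed to identify the Dai--Zhang higher spectral flow of the affine path with the suspension class entering the definition of $\partial$; once this identification is fixed, everything else is a literal transcription of the even case.
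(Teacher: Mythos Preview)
Your plan to reproduce odd-degree versions of \cref{lem 1} and \cref{lem 2} is sound, and your treatment of the second lemma matches the paper. The gap lies in your proposed proof of the odd analogue of \cref{lem 1}. The parametrix manoeuvre from the even case does not transplant: multiplying the \emph{self-adjoint} operator $D_1\#D_2$ by a parametrix $S$ of $\tilde D_1\#D_2$ yields $(D_1\#D_2)S$, which is no longer self-adjoint and hence does not represent a class in $K_1$ in the self-adjoint Fredholm picture underlying the spectral flow. Your assertion that its bounded transform over $t>0$ ``agrees with the affine straight-line interpolation between the bounded transforms of $\psi(D_1)\#D_2$ and $D_1\#D_2$'' is ill-posed for the same reason: that affine path consists of self-adjoint operators, while $(D_1\#D_2)S$ does not. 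So the step in which you ``read off $[D_1\#D_2]$ as $\partial([sf(F_1,F_0)])$'' by the invertible-modulo-compacts mechanism of \cite[lemma 6.2.1]{MR3261009} does not go through as written.

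The paper sidesteps this by dropping the parametrix altogether in the odd case. It computes the sum $[D_1\#D_2]+\partial(sf(F_0,F_1))$ in $K_1(C^*\bar G)$ directly: both summands are represented by families of self-adjoint Fredholm operators along the $\R$-direction in which $C^*\bar G$ fibres, and one concatenates the family underlying $[D_1\#D_2]$ with the affine path defining $sf(F_0,F_1)$. An affine homotopy together with \cref{rem aux} then shows this concatenation is operator-homotopic to the cycle of $\tilde D_1\#D_2$, whose class vanishes. The desired identity thus comes from staying inside the self-adjoint world throughout, which is precisely what your parametrix argument fails to do.
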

\begin{proof}
	The proof is the same as the proof \cref{index theorem in sect 3}. The only dfference is in the proof of \cref{lem 1}. One still defines the operator $\tilde{D}_1$, and one still obtains that the class of $\tilde{D}_1\# D_2$ is $0$. The proof then changes as follows : one computes the sum $$[D_1\# D_2]+sf(F_0,F_1).$$ by representing this sum as a concatinaton of the path of the first and the second element (here $C^*\bar{G}$ is seen as a fiber over $\R$), one obtains by using an affine homotopy and \cref{rem aux} that $$[D_1\# D_2]+sf(F_0,F_1)=[\tilde{D}_1\# D_2]=0.$$ The rest of the proof is then the same.
\end{proof}
We will write $sf[\psi(D_1\# D_2),D_1\# D_2]$ to mean the spectral flow of their bounded transforms.
\section{Carnot Pseudo-Differential calculus}\label{Sec Carnot diff calc}
Let $M$ be a closed manifold, $0=H^0\subseteq H^1\subseteq\cdots\subseteq H^r\subseteq TM$ vector subbundles with the property that \begin{equation}\label{comm relat}
 [\Gamma(H^i),\Gamma(H^j)]\in \Gamma(H^{i+j}),
\end{equation} where $H^k:=TM$ for $k>r$.
\paragraph{Inhomogeneous tangent bundle}Let $x\in M, \;a\in H^i_x,\;b\in H^j_x$. If $X\in \Gamma(H^i)$, $Y\in \Gamma(H^j)$ such that $X(x)=a$, $Y(x)=b$, then the value $[X,Y](x)\mod H^{i+j-1}_x$ doesn't depend on the choice of $X,Y$ and it vanishes if either $a\in H^{i-1}$ or $b\in H^{j-1}.$ Hence one gets a bundle of bilinear maps $$[\cdot,\cdot]:\frac{H^i}{H^{i-1}}\times \frac{H^j}{H^{j-1}}\to \frac{H^{i+j}}{H^{i+j-1}}$$

Since Lie bracket of vector fields satisfies Jacobi identity, it follows that those bilinear maps define for any $x\in M$ a Carnot Lie algebra structure on $$\mathfrak{g}_x:=\oplus_{k=1}^{k=r} H^{k}_x/H^{k-1}_x.$$ The simply connected group integrating this Lie algebra will be denoted by $G_x$ (by the Baker-Campbell-Hausdorff formula as a space $G_x=\mathfrak{g}_x$). The groups $G_x$ glue together to form a bundle of Carnot Lie groups on $M$ that we denote by $G$.

\paragraph{Filtration on differential operators}Let $E\to M$ be a vector bundle, $\nabla$ a connection on $E$. We equip $\mathrm{DO}(M,E),$ the algebra of differential operators acting on $E$ with the smallest increasing filtration $\mathcal{F}^i$ such that \begin{enumerate}
\item $\mathcal{F}^{-1}=0$, $\mathcal{F}^0=\Gamma(\End(E))$
\item if $X\in \Gamma(H^i),$ then $\nabla_{X}\in \mathcal{F}^i$
\item $\mathcal{F}^i\mathcal{F}^j\subseteq \mathcal{F}^{i+j}.$
\end{enumerate}
A differential operator belongs to $\mathcal{F}^n$ if locally $D$ can be written as sum of operators of the form $L\nabla_{X_1}\nabla_{X_2}\cdots\nabla_{X_k}$ with $L\in \Gamma(\End(E))$, $X_i\in \Gamma(H^{w(i)})$ and $\sum_{i=1}^kw(i)\leq n.$ It follows that the filtration is independent of $\nabla.$
\begin{prop}The following is an isomorphisms of graded algebras \begin{align*}
\oplus_{k\geq 0}\mathcal{F}^{k}/\mathcal{F}^{k-1}&\to \oplus_{k\geq 0}\Gamma(\mathcal{U}_k(\mathfrak{g},E))\\
\nabla_{X}&\to (x\to [X(x)\mod H^{i-1}_x])\quad \text{for} \;X\in \Gamma(H^i)\\
L&\to L\quad \text{for} \;L\in \Gamma(\End(E))
\end{align*}
\end{prop}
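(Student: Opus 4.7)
The plan is to define a candidate map on generators of the filtered algebra $\Do(M,E)$, verify that it descends to the associated graded, and then check it is a bijective graded algebra homomorphism. The central tool throughout is the identity $[\nabla_X,\nabla_Y]=\nabla_{[X,Y]}+R(X,Y)$ together with the fact that the curvature term lies in $\mathcal{F}^0$, which is of lower filtration order than either $\nabla_X\nabla_Y$ or $\nabla_Y\nabla_X$ as soon as $i+j\ge 1$.

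First I would work locally using a frame $X_1,\ldots,X_d$ adapted to the filtration: each $X_k\in\Gamma(H^{w(k)}|_U)$ and the images $\bar X_k(x):=X_k(x)\bmod H^{w(k)-1}_x$ form a graded basis of $\mathfrak{g}_x$ for each $x\in U$. Any $D\in\mathcal{F}^n|_U$ can then be written as a sum $\sum_\alpha L_\alpha\nabla_{X_{i_1}}\cdots\nabla_{X_{i_j}}$ with $L_\alpha\in\Gamma(\End(E))$ and $\sum_s w(i_s)\le n$, and the candidate map sends its class modulo $\mathcal{F}^{n-1}$ to $\sum_{\sum_s w(i_s)=n} L_\alpha\bar X_{i_1}\cdots \bar X_{i_j}\in\Gamma(\mathcal{U}_n(\mathfrak{g},E))$, viewing each $\bar X_k$ as a right-invariant vector field on $G$. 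Well-definedness boils down to two observations: (i) if $X-X'\in\Gamma(H^{i-1})$ then $\nabla_X-\nabla_{X'}\in\mathcal{F}^{i-1}$, so the image depends only on $\bar X_k$; (ii) by the commutator identity above, transposing two adjacent factors $\nabla_{X_{i_s}}\nabla_{X_{i_{s+1}}}$ changes $D$ by $\nabla_{[X_{i_s},X_{i_{s+1}}]}$ plus curvature, the first of which is compatible (on the $\mathcal{U}(\mathfrak{g})$ side) with the bracket $[\bar X_{i_s},\bar X_{i_{s+1}}]_{\mathfrak{g}}=\overline{[X_{i_s},X_{i_{s+1}}]}$ by construction of $\mathfrak{g}$, and the second is of strictly lower weight. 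An easy induction on the total weight then shows the two presentations of $D\bmod\mathcal{F}^{n-1}$ agree. Globalising via a partition of unity and verifying independence of the connection $\nabla$ (since changing $\nabla$ only modifies $\nabla_X$ by an element of $\Gamma(\End(E))=\mathcal{F}^0$) gives the map.

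With well-definedness in hand, the algebra-homomorphism property is automatic: on generators both sides use composition modulo lower-order terms, and the commutator computation above is exactly the defining relation on $\mathfrak{g}$. Surjectivity is immediate since the ordered monomials in the $\bar X_k$ span $\mathcal{U}_k(\mathfrak{g},E)$ locally, each being the image of the corresponding $\nabla$-monomial. The main obstacle is injectivity: one must rule out that a $\mathcal{F}^n$-operator with vanishing top weight actually lies in $\mathcal{F}^{n-1}$. This is a Poincaré–Birkhoff–Witt-type statement; I would argue pointwise. Fix $x\in M$ and a graded ordered basis of $\mathfrak{g}_x$ lifted to local vector fields $X_1,\ldots,X_d$; by the well-definedness step we may assume $D$ is written in an ordered normal form. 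Then one tests the action of $D$ at $x$ on functions of the form $f(\exp^{-1}(\cdot))$, where $f$ is a polynomial on $\mathfrak{g}_x$ of prescribed bi-weight, and observes that the top-weight contribution recovers the coefficients of the ordered monomial. Since distinct ordered monomials produce linearly independent outputs in this pairing, injectivity follows.
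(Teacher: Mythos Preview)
Your proposal is correct and follows essentially the same strategy as the paper: use the commutator identity $[\nabla_X,\nabla_Y]=\nabla_{[X,Y]}+R(X,Y)$ (with $R\in\mathcal{F}^0$) to reorder monomials within the same filtration level, and then invoke Poincar\'e--Birkhoff--Witt. The paper's proof is much terser---it simply asserts surjectivity from the universal property, reorders, and cites PBW---whereas you spell out well-definedness carefully.

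The one place your argument is more circuitous than necessary is injectivity. Having already reduced $D$ to ordered normal form $\sum_\alpha L_\alpha \nabla_{X_{i_1}}\cdots\nabla_{X_{i_j}}$, its image in $\Gamma(\mathcal{U}_n(\mathfrak{g},E))$ is $\sum L_\alpha \bar X_{i_1}\cdots\bar X_{i_j}$ over the top-weight multi-indices; PBW applied fiberwise to $\mathfrak{g}_x$ says these ordered monomials are linearly independent, so vanishing of the image forces all top-weight $L_\alpha$ to vanish, i.e.\ $D\in\mathcal{F}^{n-1}$. This is exactly the paper's argument, and it bypasses your pointwise testing on $f\circ\exp^{-1}$---which would require you to first construct adapted (``privileged'') coordinates on $M$, a nontrivial step you do not carry out and which is unnecessary here.
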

\begin{proof}
Surjectivity is clear from the definition of the universal enveloping algebra.  Suppose that $D\in \mathcal{F}^n$ and its image is $0$. It is enough to prove that $D$ is locally in $\mathcal{F}^{n-1}.$ Suppose that $D$ is the sum of operators of the form $L\nabla_{X_1}\nabla_{X_2}\cdots\nabla_{X_k}$. The commutator relation \cref{comm relat}, allows one to reorder the $\nabla_{X_i}'s$ as one wishes while staying in $\mathcal{F}^n$. Injectivity follows then from the Poincaré–Birkhoff–Witt theorem on basis of enveloping Lie algebras.
\end{proof}

If $D\in\mathcal{F}^i$, then we will denote by $\sigma_i(D)$ its image in $\Gamma(\mathcal{U}_i(\mathfrak{g},E))$. We say that $D$ satisfies the Rockland's condition if $\sigma_i(D)$ satisfies the Rockland condition at every point $x\in M$.
\begin{theorem}If $D$ is a differential operator that satisfies the Rockland condition, then $D(1+D^*D)^{-\frac{1}{2}}$ is a Fredholm operator.
\end{theorem}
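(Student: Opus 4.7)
The plan is to lift the Fredholmness of the principal symbol, already established in Section 1, to the operator $D$ itself by constructing a parametrix modulo compact operators.

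First, since $D$ satisfies the Rockland condition, so does $\sigma_i(D) \in \Gamma(\mathcal{U}_i(\mathfrak{g},E))$ at every point of $M$, and \cref{Rockland equiv} guarantees that the bounded transform of its closure is a Fredholm operator on the Hilbert $C(M)$-module $\Gamma(E) \otimes_{C(M)} C^*G$. Equivalently (via \cref{Parametrix symbols} in the appendix), there is a symbolic parametrix $Q$ of order $-i$ for which both $\sigma_i(D)\,Q - 1$ and $Q\,\sigma_i(D) - 1$ are of strictly negative order.

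Next, I would quantize $Q$ into a pseudo-differential operator $\widetilde{Q}$ of order $-i$ in the inhomogeneous Carnot pseudo-differential calculus recalled in the appendix. The composition formula at the level of principal symbols then yields that $\widetilde{Q}D - \mathrm{Id}$ and $D\widetilde{Q} - \mathrm{Id}$ are pseudo-differential operators of strictly negative order. By \cref{symbols bounded} and standard anisotropic Sobolev theory, such operators send $L^2(M,E)$ into a Sobolev space of positive regularity, and Rellich--Kondrachov on the compact manifold $M$ makes them compact on $L^2(M,E)$.

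Consequently $D$, viewed as an unbounded operator on $L^2(M,E)$ with its natural closed domain, admits a bounded two-sided parametrix modulo compact operators, hence is Fredholm in the unbounded sense. Since $D$ is closed with densely defined adjoint (a consequence of \cref{Rockland equiv} applied fiberwise together with hypoellipticity), its bounded transform $D(1+D^*D)^{-\frac{1}{2}}$ is well-defined, bounded, and its kernel and cokernel coincide with those of $D$; thus it is Fredholm in the usual sense.

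The main obstacle is the quantization step: turning the fiberwise $C^*$-algebraic symbol $Q$, which lives in the continuous field $\{C^*G_x\}_{x\in M}$, into a genuine operator $\widetilde{Q}$ on $M$ in a way compatible with composition of principal symbols and with good mapping properties on the anisotropic Sobolev scale attached to the filtration $H^\bullet$. This is precisely what the inhomogeneous pseudo-differential calculus of the appendix is engineered to deliver; once it is in place, the Fredholm conclusion reduces to the classical parametrix argument.
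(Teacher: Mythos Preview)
The paper does not actually prove this theorem: its ``proof'' consists of citations to Melin's unpublished manuscript and to Dave--Haller. Your parametrix sketch is the standard route and is essentially what those references carry out, so in spirit you are aligned with the literature the paper defers to.

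One correction, though: you repeatedly invoke the appendix (\cref{Sect Symbols}) as the source of the quantization step and of the $L^2(M)$ mapping properties, but the appendix never leaves the group bundle. The objects there---the symbol spaces $S^m(G)$, \cref{Parametrix symbols}, \cref{symbols bounded}---are operators on $C^*G$ over the base, not pseudo-differential operators on the filtered manifold $M$. The step you flag as ``the main obstacle'' (turning a fiberwise symbol $Q$ into an honest operator $\widetilde{Q}$ on $M$ with the right composition and Sobolev mapping properties) is precisely the content of the external references \cite{Erp:2015aa,Dave:aa,MelinUnpubl}, and the paper itself uses the parametrix on $M$ only as a black box (see the proof of \cref{lem rockland operator manifold}, which again cites \cite{Erp:2015aa}). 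So your argument is correct once one imports that calculus, but you should not attribute it to the appendix.
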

\begin{proof}
This was proved in an unpublished manuscript by Melin \cite{MelinUnpubl}. A proof can also be found in \cite[theorem 3.13]{Dave:aa}
\end{proof}
\paragraph{Deformation groupoid}For the index theory we need the deformation Lie groupoid \begin{align*}
 \mathbb{T}_HM= M\times M\times ]0,1]\sqcup G\times {0}\rightrightarrows M\times [0,1]
\end{align*}
with the structure maps given by \begin{align*}
r(x,y,t)=(x,t),\; r(x,g,0)=(x,0)\\ s(x,y,t)=(y,t),\; s(x,g,0)=(x,0)\\
(x,y,t)\cdot (y,z,t)=(x,z,t)\quad \text{for }\; t\neq 0\\
(x,g,0)\cdot (x,h,0)=(x,gh,0)\quad \text{for }\; g,h\in G_x\\
(x,y,t)^{-1}=(y,x,t),\; (x,g,0)^{-1}=(x,g^{-1},0)
\end{align*}

The construction of this Lie groupoid can be found in \cite{Erp:2016aa,Choi:2015aa,Mohsen:2018aa}.
\paragraph{Index map} Following \cref{def index map} applied to $\mathbb{T}_HM$, one obtains an index map that will be denoted by $\Ind_{CAS}$ for Carnot-Atiyah-Singer $$K_0\left(C^*G\right)\xrightarrow{\ev_0^{-1}} K_0(C^*(\mathbb{T}_H(M)))\xrightarrow{\ev_1}K_0(\mathcal{K}(L^2(M)))=\Z.$$ 

\begin{theorem} \label{diagram commutes double def}The following diagram commutes

$$\begin{tikzcd}
K_0(C^*G)\arrow[r,"\Ind_{CAS}"]& \Z\\
K_0(C_0(\mathfrak{g}^*))\arrow[u,"\Ind_G"]\arrow[r,"S"]&K_0(C_0(T^*M))\arrow[u,"\Ind_{AS}"']
\end{tikzcd},$$ where $\Ind_{AS}$ is the Atiyah-Singer index map, and $S:TM\to \mathfrak{g}$ any linear isomorphism induced from a splitting $L^i\oplus H^i=H^{i+1}$. The map $S$ is unique up to homotopy.
\end{theorem}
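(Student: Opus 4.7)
The plan is to construct a single double-deformation Lie groupoid $\mathscr{G}$ over $M\times [0,1]_t\times [0,1]_s$ whose four boundary restrictions realise the four corners and maps of the diagram, and then chase a common K-theory class around this square. Explicitly, set
$$\mathscr{G} \;=\; \bigl(M\times M\times\, ]0,1]_t\times [0,1]_s\bigr) \ \sqcup\ \dnc(G,M)\times\{t=0\},$$
where $\dnc(G,M)$ is the bundle of Carnot Lie groups over $M\times [0,1]_s$ from \cref{Connes-Thom sect}: at $(x,s)$ its fiber is $\mathfrak{g}_x$ equipped with the rescaled bracket $\alpha_s([\cdot,\cdot]_{\mathfrak{g}_x})$. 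A smooth structure is given by the same iterated DNC chart construction used to define $\mathbb{T}_HM$ in \cite{Mohsen:2018aa}. Directly from the definition one reads off: $\mathscr{G}|_{s=1}$ is $\mathbb{T}_HM$; $\mathscr{G}|_{s=0}$ is the classical tangent groupoid $\mathbb{T}M$ under the identification of $TM$ with $\mathfrak{g}$ via $S$; $\mathscr{G}|_{t=0}$ is $\dnc(G,M)$; and $\mathscr{G}|_{t=1}$ is $M\times M\times [0,1]_s$, whose $C^*$-algebra is $\mathcal{K}(L^2M)\otimes C[0,1]_s$.

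Next I would verify that the evaluation $\ev_{(0,0)}:C^*\mathscr{G}\to C_0(\mathfrak{g}^*)$ induces an isomorphism in K-theory, by exactly the contractibility argument used in \cref{commt diagram groups}. The ideal $J=\ker(\ev_{s=0})$ is the $C^*$-algebra of the subgroupoid $\mathscr{G}|_{s\in\, ]0,1]}$, which is isomorphic to $\mathbb{T}_HM\times\, ]0,1]_s$ via the Carnot automorphism $\alpha_s$ and hence contractible. The quotient $\ker(\ev_{(0,0)})/J$ identifies with the ideal $\ker(\ev_{t=0})\subseteq C^*\mathbb{T}M$, namely $C_0(]0,1]_t)\otimes\mathcal{K}(L^2M)$, which is also contractible. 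The six-term exact sequence then yields $K_*(\ker(\ev_{(0,0)}))=0$.

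Given $x\in K_0(C_0(\mathfrak{g}^*))$, let $X\in K_0(C^*\mathscr{G})$ be the unique class with $K(\ev_{(0,0)})(X)=x$. Restricting $X$ along $\{s=0\}$ and evaluating at $t=1$ returns $\Ind_{AS}(S(x))$ by the very definition of $\Ind_{AS}$ inside $\mathbb{T}M$. Restricting $X$ along $\{t=0\}=\dnc(G,M)$ and evaluating at $s=1$ returns $\Ind_G(x)$; restricting the resulting class along $\{s=1\}=\mathbb{T}_HM$ and evaluating at $t=1$ then returns $\Ind_{CAS}(\Ind_G(x))$. Both of these integers are the evaluations, at $s=0$ and at $s=1$ respectively, of the single class $K(\ev_{t=1})(X)\in K_0(\mathcal{K}(L^2M)\otimes C[0,1]_s)$, and contractibility of $C[0,1]_s$ forces these two evaluations to agree. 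Hence $\Ind_{AS}(S(x))=\Ind_{CAS}(\Ind_G(x))$, which is the desired commutativity.

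The main obstacle is the technical one of equipping $\mathscr{G}$ with a smooth structure near the corner $(t,s)=(0,0)$, where the Carnot rescaling and the tangent-groupoid rescaling collide; this is handled by iterating the elementary deformation-to-the-normal-cone construction of \cite{Mohsen:2018aa}, and once it is in place the rest of the argument is a formal diagram chase. The final assertion of the theorem, that $S$ is unique up to homotopy, is automatic: the space of splittings of $0\to H^{i-1}\to H^i\to H^i/H^{i-1}\to 0$ is an affine, hence contractible, space, so the induced K-theory map in the bottom row is independent of the choice of splitting.
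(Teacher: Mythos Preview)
Your overall strategy—build a double-deformation groupoid over $M\times[0,1]^2$ whose four faces realise the four maps of the square, then run the contractibility argument of \cref{commt diagram groups}—is exactly the paper's approach, and your $K$-theory chase in the last two paragraphs is correct once the groupoid is right.

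The gap is in the groupoid itself. The iterated DNC of \cite{Mohsen:2018aa} does \emph{not} produce the $\mathscr{G}$ you wrote down; it produces
\[
M\times M\times\,]0,1]^2\ \sqcup\ G\times\{0\}\times\,]0,1]\ \sqcup\ TM\times\,]0,1]\times\{0\}\ \sqcup\ \mathfrak{g}\times\{(0,0)\},
\]
so along the edge $t>0,\ s=0$ one sees $TM$, not the pair groupoid. Its four faces are then $\mathbb{T}_HM$ (at $s=1$, giving $\Ind_{CAS}$), Connes' tangent groupoid (at $t=1$, giving $\Ind_{AS}$), $\dnc(G,M)$ (at $t=0$, giving $\Ind_G$), and the adiabatic deformation of $TM$ onto $\mathfrak{g}$ (at $s=0$, giving the map $S$). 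In your $\mathscr{G}$ the face $s=0$ is $(M\times M\times\,]0,1]_t)\sqcup\mathfrak{g}$, and with any smooth structure compatible with the Carnot charts you use for $s>0$ this is \emph{not} the classical tangent groupoid: convergence of $(x_n,y_n,t_n)$ to a point of $\mathfrak{g}$ still carries the inhomogeneous weights $t^i$ on $H^i/H^{i-1}$, whereas in $\mathbb{T}M$ every direction scales by $t$. So your assertion ``$\mathscr{G}|_{s=0}$ is the classical tangent groupoid $\mathbb{T}M$'' is unjustified, and with it the step ``evaluating at $t=1$ returns $\Ind_{AS}(S(x))$'' fails. If you replace your $\mathscr{G}$ by the groupoid above, your argument becomes verbatim the paper's proof; the remark on contractibility of the space of splittings is fine.
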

\begin{proof}
In \cite{Mohsen:2018aa}, the deformation groupoid is constructed as a restriction of a bigger groupoid  which is given by $$M\times M\times ]0,1]^2\sqcup G\times \{0\}\times ]0,1]\sqcup TM\times ]0,1]\times \{0\}\sqcup \mathfrak{g}\times \{0\}\times\{0\}\rightrightarrows M\times [0,1]^2.$$ The restriction of this groupoid to $[0,1]\times \{1\}$ gives the deformation groupoid $\mathbb{T}_HM$, and the index map associated is $\Ind_{CAS}$.

The restriction to $\{1\}\times [0,1]$ gives Connes tangent groupoid. It is proved in \cite[lemma 6 on page 109]{MR1303779} that the index map associated to Connes tangent groupoid is the Atiyah-Singer index map. 

The restriction to $\{0\}\times [0,1]$ gives the deformation groupoid constructed in \cref{Connes-Thom sect} that was used to construct the map $\Ind_G$. 

The restriction to $[0,1]\times \{0\}$ gives the deformation of $TM$ onto the graded vector bundle $\mathfrak{g}$. The index map associated is then the map $S$.

The proof of the commutativity of the diagram is then the same as that of \cref{commt diagram groups}.
\end{proof}
\begin{theorem}\label{thm index = analytic index}
 Let $D\in \Gamma(\mathcal{U}_k(\mathfrak{g},V_0,V_1))$ be a differential operator satisfying the Rockland condition. Then $$\Ind_{CAS}([\sigma(D)])=\dim(\ker(D))-\dim(\ker(D^t)).$$
\end{theorem}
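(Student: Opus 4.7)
The plan is to adapt Connes' tangent-groupoid proof of the Atiyah--Singer index theorem to the filtered setting: I would construct a single class in $K_0(C^*(\mathbb{T}_H M))$ whose two boundary evaluations recover the symbol class $[\sigma(D)] \in K_0(C^*G)$ at $t=0$ and the analytic index of $D$ at $t=1$, so that the identity is an immediate consequence of the factorization $\Ind_{CAS} = K(\ev_1) \circ K(\ev_0)^{-1}$.

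First I would lift $D$ to a right-invariant differential operator $\widetilde{D}$ of filtration degree $k$ on $\mathbb{T}_H M$. Choosing a connection $\nabla$ on $V_0 \oplus V_1$, write $D$ locally as a polynomial in covariant derivatives $\nabla_X$ with $X \in \Gamma(H^{w})$ of total weight at most $k$. Each $\nabla_X$ has a canonical lift to a right-invariant vector field $\widetilde{\nabla}_X$ on the source fibers of $\mathbb{T}_H M$: at $t=0$ it is the class of $X$ in $\mathfrak{g}_w$ viewed as a right-invariant field on $G$, while for $t>0$ it becomes $t^{w} X$ seen as a vertical field on the pair groupoid $M \times M$. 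Substituting these lifts into the local expression of $D$ yields $\widetilde{D}$ with $\widetilde{D}|_{t=0} = \sigma(D)$ and $\widetilde{D}|_{t>0}$ conjugate to $t^k D$ under the dilation of the Carnot fibers.

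Next I would produce the class $[\widetilde{D}] \in K_0(C^*(\mathbb{T}_H M))$. At $t=0$ the Rockland condition holds by hypothesis, and by Proposition \ref{Rockland open} it persists on some interval $[0,\epsilon]$; on $[\epsilon, 1]$, the restriction $\widetilde{D}|_t$ is a positive scalar multiple of $D$, which is Fredholm by the theorem of Melin/Dave cited above. Combining these two regimes, one forms the bounded transform $\widetilde{D}(1 + \widetilde{D}^* \widetilde{D})^{-1/2}$ as a Kasparov--Fredholm operator on the groupoid $C^*$-module $\Gamma(V_0 \oplus V_1) \otimes_{C(M \times [0,1])} C^*(\mathbb{T}_H M)$, extending Proposition \ref{Rockland equiv}. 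By construction $\ev_0[\widetilde{D}] = [\sigma(D)]$ in $K_0(C^*G)$, and $\ev_1[\widetilde{D}] = \dim(\ker(D)) - \dim(\ker(D^t))$ under the identification $K_0(\mathcal{K}(L^2 M)) = \Z$, which gives the result.

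The main obstacle is producing $[\widetilde{D}]$ rigorously: at $t > 0$ the Lie algebroid of $\mathbb{T}_H M$ is the abelian bundle $TM$ and $\widetilde{D}|_t$ is the hypoelliptic but generally non-elliptic operator $D$, so the group-bundle Rockland framework of Section \ref{Rockl sect} does not apply pointwise in $t$ and one must genuinely work on the Lie groupoid rather than fiber by fiber. The cleanest route is through the pseudo-differential calculus on $\mathbb{T}_H M$ furnished by the construction in \cite{Mohsen:2018aa}, which simultaneously supplies the bounded transform of $\widetilde{D}$ and the regularity/Fredholmness required for a Kasparov class; these analytic details are naturally relegated to the appendix \ref{Sect Symbols}.
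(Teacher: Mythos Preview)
Your strategy is exactly the paper's: build a single Fredholm class on $C^*(\mathbb{T}_HM)$ whose evaluations at $t=0$ and $t=1$ are $[\sigma(D)]$ and the analytic index. The paper defines the lift more bluntly as $\mathbb{D}|_{t>0}=t^kD$, $\mathbb{D}|_{t=0}=\sigma_k(D)$, and observes that the very construction of $\mathbb{T}_HM$ makes this family smooth; your term-by-term lift differs from $t^kD$ only by contributions of filtration $<k$, which vanish at $t=0$ in groupoid coordinates, so the two agree where it matters.

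Your own final paragraph is the correct diagnosis, and the paper takes exactly that route: the regularity and Fredholmness of $\mathbb{D}(1+\mathbb{D}^*\mathbb{D})^{-1/2}$ over all of $[0,1]$ are obtained in one stroke from the existence of a parametrix in the filtered pseudodifferential calculus on $\mathbb{T}_HM$, with the paper citing \cite[Section~9]{Erp:2015aa} rather than \cite{Mohsen:2018aa}. The intermediate piecewise argument you sketch---Proposition~\ref{Rockland open} for $t\in[0,\epsilon]$ and Melin/Dave for $t\in[\epsilon,1]$---is indeed not available as stated, since Proposition~\ref{Rockland open} concerns bundles of Carnot groups and for $t>0$ the source fibers of $\mathbb{T}_HM$ are copies of $M$, not nilpotent groups; you are right to abandon it.
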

\begin{proof}
The construction of the inhomogeneous deformation groupoid makes it so that the operator $\mathbb{D}:\Gamma_c(\mathbb{T}_HM,r^*\pi_M^*V_0)\to \Gamma_c(\mathbb{T}_HM,r^*\pi_M^*V_0)$ given by the following formula is smooth \begin{align*}
\mathbb{D}f(x,y,t)=t^kDf(x,y,t)\\
\mathbb{D}f(x,g,0)=\sigma_k(D)(x)f(x,g,0)
\end{align*}
where in the first formula $D$ acts on the $x$-variable, and the in the second $\sigma_k(D)(x)$ is the symbol of $D$ at $x$. Here $\pi_M\circ r:T_HM\to M$ is the composition of the range map with the projection $\pi_M:M\times [0,1]\to M$
\begin{lem}\label{lem rockland operator manifold}The operator $\mathbb{D}$ is regular and the multiplier $$\mathbb{D}(1+\mathbb{D}^*\mathbb{D})^{-\frac{1}{2}}\in \mathcal{L}(\Gamma(V_0)\otimes_{C(M)}C^*\mathbb{T}_HM,\Gamma(V_1)\otimes_{C(M)}C^*\mathbb{T}_HM)$$ is Fredholm.
\end{lem}
\begin{proof}
This follows from the existence of a parametrix for $D$, see \cite[section 9]{Erp:2015aa}.
\end{proof}
It follows from \cref{lem rockland operator manifold} and the definition of $\Ind_{CAS}$ that $\Ind_{CAS}([\sigma(D)])$ is equal to the class of $D(1+D^*D)^{-\frac{1}{2}}$ in $K^0(\mathcal{K}(L^2M))=\Z.$
\end{proof}

\paragraph{Index theorem} Let $M$ be a spin$^c$ manifold, $D:\Gamma(V_0)\to 
\Gamma(V_1)$ an operator satisfying Rockland condition on $M$, 
$$\Ind_a(D):=\dim(\ker(D))-\dim(\ker(D^t)).$$ \Cref{thm index = analytic 
index}, \cref{diagram commutes double def}, the Atiyah-Singer index 
theorem imply $$\Ind_a(D)=\int_M 
\ch(\mathrm{Th}(\Ind_G^{-1}([\sigma(D)])))\mathrm{Td}(TM).$$

Here we use the Todd class for spin$^c$ bundles defined in \cite{BDauglas}. Let $\slashed{D}\in \Gamma(\mathcal{U}_k(\mathfrak{g}^*\oplus 
\R,W_0,W_1))$ be the Dirac operator constructed in \cref{examps sect 
1}.2. Then the operator $\slashed{D}\#c\sigma(D)$ acting on 
$\bar{G}=(\mathfrak{g}^*\oplus \R)\rtimes G$  for $c>0$ small enough 
satisfies Rockland condition. Furthermore 
$$\Ind_{\bar{G}}^{-1}([\slashed{D}\#c\sigma(D)])=[\slashed{D}]\otimes 
\Ind_{G}^{-1}([\sigma(D)]).$$
It follows that 
$$\Ind_a(D)=\int_M\ch(\mathrm{Th}(\Ind_{\bar{G}}^{-1}([\slashed{D}\#c\sigma(D)])))\mathrm{Td}(TM).$$
\Cref{index theorem in sect 3} implies
\begin{theorem}\label{index formula thm} there exists $\epsilon>0$, such that for $0<c<\epsilon$,
 \begin{enumerate}
\item if $M$ is odd dimenional then \begin{align*}
 \Ind_a(D)=\int_M\ch[\pi_x^+(\slashed{D}\#c\sigma(D))\pi_x^+(\psi(\slashed{D})\#c\sigma(D))^{-1}]\mathrm{Td}(TM)
\end{align*}
\item if $M$ is even dimentional, then 
 \begin{align*}
\Ind_a(D)=\int_M\ch[sf(\pi_x^+(\psi(\slashed{D})\#c\sigma(D)),\pi_x^+(\slashed{D}\#c\sigma(D))]\mathrm{Td}(TM)
\end{align*}
\end{enumerate}
where $\pi^+$ is the left regular representation of $G$ and $\psi$ is as in \cref{sect Index thm}, the map $\psi(\xi,t)=(\xi,-t)$.
\end{theorem}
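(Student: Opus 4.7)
The plan is to substitute the operator-theoretic inversion of the Connes-Thom isomorphism provided by \Cref{sect Index thm} into the K-theoretic index formula
$$\Ind_a(D)=\int_M\ch(\mathrm{Th}(\Ind_{\bar{G}}^{-1}([\slashed{D}\#c\sigma(D)])))\mathrm{Td}(TM)$$
derived in the paragraph immediately preceding the statement. I would begin by fixing $\epsilon>0$ small enough so that for every $c\in(0,\epsilon)$ all of the symbols appearing in the proofs of \Cref{index theorem in sect 3} and \Cref{index theorem in sect 3 K0}, namely $\slashed{D}\#c\sigma(D)$, $\psi(\slashed{D})\#c\sigma(D)$, and the auxiliary $\tilde{\slashed{D}}\#c\sigma(D)$ from \Cref{lem 1}, satisfy the Rockland condition on $\bar{G}$ simultaneously; existence of such a uniform $\epsilon$ follows from \Cref{main thm on ind crossed product}.1 applied to each factor, together with \Cref{Rockland open} and compactness of $M$.

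Then I would split according to the parity of $\dim M$. If $M$ is odd dimensional, $\dim(\mathfrak{g}^*\oplus\R)=\dim(M)+1$ is even, the Clifford module of \cref{examps sect 1}.2 splits as $W_0\oplus W_1$, $\slashed{D}$ is $\Z/2$-graded, and $[\slashed{D}\#c\sigma(D)]\in K_0(C^*\bar{G})$; \Cref{index theorem in sect 3} with $D_1=\slashed{D}$ and $D_2=c\sigma(D)$ identifies $\Ind_{\bar{G}}^{-1}([\slashed{D}\#c\sigma(D)])$ with the Thom transform of the $K^1(M)$-class represented by the invertible family $\pi_x^+(\slashed{D}\#c\sigma(D))\pi_x^+(\psi(\slashed{D})\#c\sigma(D))^{-1}$, and substitution into the Chern formula yields the first identity. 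If $M$ is even dimensional, then $\slashed{D}$ is symmetric on an ungraded bundle, $[\slashed{D}\#c\sigma(D)]\in K_1(C^*\bar{G})$, and \Cref{index theorem in sect 3 K0} replaces the invertible family by the spectral flow of the pair of bounded transforms; the same substitution yields the second identity.

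The substantive content, namely the inversion of the Connes-Thom isomorphism on $\bar{G}$ via the boundary representation $\pi^+$ and the reduction of the analytic index to Atiyah-Singer via the double deformation of \Cref{diagram commutes double def}, is already in place. The main obstacle is therefore limited bookkeeping: verifying that a single $\epsilon$ works uniformly for every symbol and every homotopy invoked in the proofs of \Cref{lem 1} and \Cref{lem 2}, and matching the $\Z/2$-grading behaviour of $\slashed{D}$ to the parity of $\dim M$ so that the correct inversion statement from \Cref{sect Index thm} is used in each case.
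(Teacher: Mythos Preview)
Your proposal is correct and follows essentially the same route as the paper: the paper's argument is precisely the paragraph preceding the statement, which reduces $\Ind_a(D)$ to $\int_M\ch(\mathrm{Th}(\Ind_{\bar{G}}^{-1}([\slashed{D}\#c\sigma(D)])))\mathrm{Td}(TM)$ via \Cref{thm index = analytic index}, \Cref{diagram commutes double def}, Atiyah--Singer and \Cref{main thm on ind crossed product}, and then invokes \Cref{index theorem in sect 3} (resp.\ \Cref{index theorem in sect 3 K0}) to replace $\Ind_{\bar{G}}^{-1}$ by the concrete invertible family (resp.\ spectral flow). Your treatment is in fact a bit more careful than the paper's, making explicit both the uniform choice of $\epsilon$ and the parity bookkeeping that matches the grading of $\slashed{D}$ on $\mathfrak{g}^*\oplus\R$ to the two cases of the statement.
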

If the manifold is only oriented, then one can replace $\slashed{D}$ 
with the signature operator.

\appendix

\section{Symbols of psuedo-differential operators}\label{Sect Symbols}
The definition and the basic properties of the symbol calculus for Carnot groups can be found in \cite{MR764508,MR739894,MR1185817,Erp:2015aa}. We need as well extensions to the above for bundles of Carnot Lie groups. This is done in \cite{MR1185817} for locally trivial bundles, in \cite{MR2417549} for bundles of $2$-rank groups, in \cite{Dave:aa,Dave:ab} for arbitrary bundles..

Other than the references given above, we will present a way (\cref{lift Rock}) that allows one to reduce some of the results of this section to the case where the bundle is a constant bundle. The results then follow from their counterpart in \cite{MR1185817}.
\begin{dfn} Let $G\to X$ be a bundle of Carnot Lie group on a locally compact space $X$. The space $S^m(G)$ denotes the space of distributions $u\in \mathcal{E}'(G)$ such that 
\begin{enumerate}
\item $u$ is transversal to the map $\pi:G\to X$ (\cite{Androulidakis:2009aa,MR3669118}).  This means that if $\pi:G\to X$ denotes the projection map, then $\pi_*(u)\in C^\infty(X).$
\item  for every $\lambda\in \R^+$, \begin{equation}\label{homog condition symbols}
 \alpha_\lambda^*u-\lambda^m u\in C^\infty_c(G), 
\end{equation}where $(\alpha_\lambda^*u,f):=(u,f(\alpha_\lambda(\cdot)))$ for $f\in C^\infty(G).$
\end{enumerate}

\end{dfn}
The first condition says that $u$ is a 'smooth' family of distributions $u_x\in \mathcal{E}'(G_x).$ Since $u$ has compact support and for any $a\in G\backslash X$, $\lim_{\lambda\to +\infty}\alpha_\lambda(a)=+\infty$. \Cref{homog condition symbols} implies that $u$ is smooth on $G\backslash X.$
We denote by $F:\mathcal{E}'(G)\to  \mathcal{S}'(\mathfrak{g}^*)$ the fiber wise Euclidean Fourier transform, when $G$ is identified with $\mathfrak{g}$.
\begin{prop}\label{Fourier transform}If $u\in S^m(G)$, then $Fu\in S'(\mathfrak{g}^*)$ is smooth on $\mathfrak{g}^*\backslash \{0\}$ and outside a compact neighbourhood of $\{0\}$, the function $Fu$ is a Schwartz function plus a homogeneous function of degree $\alpha$.
\end{prop}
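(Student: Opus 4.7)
The plan is to handle the two claims separately. The smoothness of $Fu$ on $\mathfrak{g}^*\setminus\{0\}$ is the easy part, while the decomposition at infinity will follow by differentiating the almost-homogeneity condition, Fourier transforming, and solving the resulting first-order radial ODE. For smoothness: since $u\in\mathcal{E}'(G)$ is compactly supported and transversal to $\pi:G\to X$, each fibre restriction $u_x$ is a compactly supported distribution on $G_x=\mathfrak{g}_x$; by Paley--Wiener its Euclidean Fourier transform is analytic along the fibre, and transversality yields smooth dependence on $x\in X$. Hence $Fu\in C^\infty(\mathfrak{g}^*)$, which is stronger than the proposition demands.

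For the decomposition I would first differentiate the condition $\alpha_\lambda^*u-\lambda^m u\in C_c^\infty(G)$ in $\lambda$ at $\lambda=1$, obtaining $(\mathcal{H}-m)u\in C_c^\infty(G)$, where $\mathcal{H}=\sum_i i\,x_i\partial_{x_i}$ is the Euler vector field of the grading. Applying $F$ and using the standard exchange formulae for the Euclidean Fourier transform gives
\[
(\widetilde{\mathcal{H}}+Q+m)\,v=s,\qquad v:=Fu,\quad s\in\mathcal{S}(\mathfrak{g}^*),
\]
where $\widetilde{\mathcal{H}}=\sum_i i\,\xi_i\partial_{\xi_i}$ is the corresponding Euler field on $\mathfrak{g}^*$ and $Q=\sum_i i\dim\mathfrak{g}_i$ is the homogeneous dimension. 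Fixing a smooth homogeneous norm $N:\mathfrak{g}^*\to[0,\infty)$ with $N(\alpha_\lambda\xi)=\lambda N(\xi)$ and writing each nonzero $\xi$ as $\xi=\alpha_{e^t}\xi_0$ with $N(\xi_0)=1$, the orbit of $\widetilde{\mathcal{H}}$ through $\xi_0$ is $t\mapsto\alpha_{e^t}\xi_0$, so the equation becomes $\tfrac{d}{dt}\bigl(e^{(Q+m)t}v(\alpha_{e^t}\xi_0)\bigr)=e^{(Q+m)t}s(\alpha_{e^t}\xi_0)$. Integrating from $t$ to $+\infty$ (which converges by the Schwartz decay of $s$) produces, for $t\geq0$,
\[
v(\alpha_{e^t}\xi_0)=e^{-(Q+m)t}A(\xi_0)-\int_0^\infty e^{(Q+m)\sigma}s(\alpha_{e^{t+\sigma}}\xi_0)\,d\sigma,
\]
with $A(\xi_0):=v(\xi_0)+\int_0^\infty e^{(Q+m)\sigma}s(\alpha_{e^\sigma}\xi_0)\,d\sigma$. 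Extended by $\alpha_\lambda$-homogeneity to $\{N\geq1\}$, the first summand defines a function $v_0$ homogeneous of degree $-(Q+m)$, which identifies the exponent in the statement, and the second summand is the remainder $R:=v-v_0$.

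The step I expect to be the main obstacle is verifying that $R$ is Schwartz on $\{N\geq1\}$ with all estimates uniform in $x\in X$. For this I would use the lower bound $|\alpha_\lambda\eta|\geq\lambda|\eta|$ valid for $\lambda\geq1$ (since $\alpha_\lambda$ has eigenvalues $\lambda^i$ with $i\geq1$), combined with the Schwartz decay of $s$ and its derivatives, to estimate
\[
|R(\xi)|\leq\int_0^\infty e^{(Q+m)\sigma}C_k(1+e^\sigma|\xi|)^{-k}\,d\sigma\leq C'_k|\xi|^{-k}
\]
for any integer $k>Q+m$ and any $\xi$ with $|\xi|\geq 1$, and similarly for each derivative $\partial^\beta R$ after differentiating under the integral and absorbing the chain-rule factor $e^{\sigma\sum_i i\beta_i}$ into a larger choice of $k$. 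Uniformity in $x$ is automatic from the compact support in $G$ of $u$ and of $(\mathcal{H}-m)u$ together with their smooth dependence on $x$, which provides locally uniform Schwartz bounds on $s$ over the base.
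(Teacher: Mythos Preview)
The paper does not prove this here; it cites \cite[Propositions~1.6 and 1.7]{MR764508} for a single fibre and asserts that the argument carries over to bundles. Your plan---Fourier transform, reduce to a first-order radial ODE along the dilation orbits, solve explicitly---is precisely the mechanism behind those propositions, so the overall route is the intended one. Two points need attention.

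First, a convention slip. With the paper's definition $(\alpha_\lambda^*u,f)=(u,f\circ\alpha_\lambda)$ (the distributional pushforward, not the function pullback) one has $\partial_\lambda\big|_{\lambda=1}\alpha_\lambda^*u=-(\mathcal H+Q)u$, so the infinitesimal relation reads $-(\mathcal H+Q+m)u\in C_c^\infty(G)$, not $(\mathcal H-m)u\in C_c^\infty(G)$. (Test on $u=\delta'$ on $\mathbb R$: this is exactly homogeneous of degree $1$, so $f_\lambda\equiv0$; indeed $(\mathcal H+Q+1)\delta'=x\delta''+2\delta'=-2\delta'+2\delta'=0$, whereas your $(\mathcal H-1)\delta'=-3\delta'\neq0$.) After Fourier transform the correct equation is $(\widetilde{\mathcal H}-m)v=s$, and the homogeneous piece of $v$ has degree $m$, not $-(Q+m)$; the paper's undefined ``$\alpha$'' should be read as $m$.

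Second, and more substantively, the step ``differentiate the condition to obtain $(\ldots)u\in C_c^\infty$'' is exactly where the analytic content lies, and you have asserted it rather than proved it. Knowing that $f_\lambda\in C_c^\infty(G)$ for each $\lambda$ and that $\lambda\mapsto f_\lambda$ is smooth into $\mathcal E'$ does not imply the $\lambda$-derivative lands in $C_c^\infty$; equivalently, on the Fourier side, knowing $s_\lambda:=v(\alpha_\lambda\cdot)-\lambda^m v\in\mathcal S$ for each $\lambda$ does not automatically give $\partial_\lambda|_{\lambda=1}s_\lambda\in\mathcal S$. Your later Schwartz estimates for the remainder $R$ presuppose $s\in\mathcal S$, so the gap is genuine. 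What rescues the situation---and what Taylor's proof actually exploits---is the cocycle identity $s_{\lambda\mu}=\lambda^m s_\mu+s_\lambda\circ\alpha_\mu$ coming from $\alpha_{\lambda\mu}=\alpha_\mu\alpha_\lambda$: iterating it expresses $s_\lambda$ for large $\lambda$ in terms of the single fixed Schwartz function $s_2$ and values $s_\mu$ with $\mu$ in a compact interval, from which the required uniform Schwartz bounds (and hence those for $R$) can be extracted. You should make this bookkeeping explicit.
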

\begin{proof}
If $X$ is a single point, then this is \cite[proposition 1.6,1.7]{MR764508}. The same proof generalises to the bundles.
\end{proof}
If $u_1\in S^{m_1}(G)$ and $u_2\in S^{m_2}(G)$, then their convolution is defined by $$(u_1\star u_2,f)=(u_1,x\to (u_2,y\to f(y x))).$$ \begin{prop}$u_1\star u_2\in S^{m_1+m_2}(G).$
\end{prop}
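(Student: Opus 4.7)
The plan is to verify directly the two defining conditions of $S^{m_1+m_2}(G)$ for the convolution $u_1\star u_2$: transversality to $\pi$ and the homogeneity condition modulo $C^\infty_c(G)$. Since convolution is fiberwise with respect to $\pi:G\to X$, both checks ultimately reduce to the corresponding statements inside each fiber $G_x$, combined with a smoothness-in-$x$ argument.

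First I would check that $u_1\star u_2$ is a well-defined compactly supported distribution and is transverse to $\pi$. Compact support is immediate because the map $(a,b)\mapsto ab$ restricted to $\mathrm{supp}(u_1)\times_X\mathrm{supp}(u_2)$ is proper with image in a compact set. Transversality says that $\pi_*(u_1\star u_2)\in C^\infty(X)$; since $\pi_*(u_1\star u_2)(x)=\pi_{G_x*}(u_{1,x}\star u_{2,x})$ and convolution commutes with integration along the fiber, this integral can be rewritten as $\pi_*(u_1)\cdot \pi_*(u_2)$, a product of two smooth functions on $X$.

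Next I would handle the homogeneity condition. Since each $\alpha_\lambda$ is a Lie group automorphism of each fiber $G_x$ (by the Baker--Campbell--Hausdorff formula, as recalled in \Cref{Rockl sect}), one has the key identity
\begin{equation*}
\alpha_\lambda^*(u_1\star u_2)=(\alpha_\lambda^*u_1)\star(\alpha_\lambda^*u_2).
\end{equation*}
Writing $\alpha_\lambda^*u_i=\lambda^{m_i}u_i+v_i$ with $v_i\in C^\infty_c(G)$ and expanding yields
\begin{equation*}
\alpha_\lambda^*(u_1\star u_2)-\lambda^{m_1+m_2}(u_1\star u_2)=\lambda^{m_1}\,u_1\star v_2+\lambda^{m_2}\,v_1\star u_2+v_1\star v_2.
\end{equation*}
It then remains to prove that each of these three cross terms lies in $C^\infty_c(G)$. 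Compact support follows as in the first step. For smoothness, I would invoke the standard fact that the convolution of a compactly supported distribution with a smooth compactly supported function is smooth, applied fiberwise, together with transversality of the distributional factor (which guarantees that the resulting smooth family of functions on the fibers assembles to a smooth function on $G$). In groupoid language this is exactly the statement that $C^\infty_c(G)$ is a two-sided module over the convolution algebra of transverse compactly supported distributions.

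The main obstacle is the last point: ensuring joint smoothness in the base variable $x\in X$ of the cross terms $u_i\star v_j$, since $u_i$ is only a distribution on $G$ and the bundle need not be locally trivial. This is where the transversality hypothesis of the definition is essential; without it, smoothness along $X$ would fail. Once smoothness in the fiber direction (automatic from the $C^\infty_c$ factor) and smoothness in the transverse direction (from transversality of $u_i$) are established, the proof concludes.
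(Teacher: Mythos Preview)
The paper states this proposition without proof, so there is nothing to compare against; your direct verification is exactly the standard argument (cf.\ \cite[Proposition~2.2]{MR764508} for the single-group case) and is correct. The only point worth tightening is the transversality check: the paper's phrasing ``$\pi_*(u)\in C^\infty(X)$'' is shorthand for the stronger condition from \cite{Androulidakis:2009aa,MR3669118} that $u$ is a \emph{smooth family} of fiberwise distributions, so showing $\pi_*(u_1\star u_2)=\pi_*(u_1)\cdot\pi_*(u_2)$ alone is not quite enough---you should also note that the pairing $x\mapsto (u_{1,x}\star u_{2,x},f|_{G_x})$ is smooth for every $f\in C^\infty(G)$, which follows from the same transversality hypothesis on $u_1,u_2$ and smoothness of the fiberwise product map. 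Once that is said, the homogeneity computation and the smoothness of the cross terms $u_i\star v_j$ (distribution transverse to $\pi$ convolved with $C^\infty_c$) go through exactly as you wrote.
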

Let $u\in S^m(G)$. For each $x\in X$, one defines the operator \begin{align*}
 Op(u_x):C^\infty_c(G_x)\to C^\infty_c(G_x)\\ f\to \bigg(a\to \big(u_x,b\to f(ba)\big)\bigg),\quad f\in C^\infty_c(G),a,b\in G_x.
\end{align*} Smoothness of the family $u_x$, implies that $Op(u_x)$ glue together to form an operator $$Op(u):C^\infty_c(G)\to C^\infty_c(G).$$

The following identity follows directly from the definition $$Op(u)(f\star g)=Op(u)(f)\star g,\; Op(u_1\star u_2)=Op(u_1)\circ Op(u_2).$$ 

If $D\in  \mathcal{U}_k(G)$, then the distribution $(u,f)=Df(e)$ belongs to $S^k(G).$ Since $D$ is right-invariant, $Op(u)=D$. This justifies considering $\mathcal{U}_k(G)\subseteq S^k(G).$

Let $u^*$ be the distribution defined by $(u^*,f)=\overline{(u,f^*)}$. It is straightforward to see that $u^*\in S^m(G)$ and $Op(u^*)$ is contained in the adjoint of $Op(u)$.

If $\phi:G_1\to G_2$ is a bundle of grade preserving group homorphisms between Carnot Lie groups, then pushforward of distributions defines a algebra homomorphism $\phi_*:S^*(G_1)\to S^*(G_2)$.
\begin{lem}\label{surjectivity symbols}If $\phi$ is a submersion, then $\phi_*:S^m(G_1)\to S^m(G_2)$ is surjective.
\end{lem}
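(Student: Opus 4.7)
The plan is to reduce to the case of a single Carnot group via \cref{lift Rock}, then invoke the corresponding surjectivity statement from \cite{MR1185817} for a fixed surjective Carnot homomorphism.

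More concretely, I would use \cref{lift Rock} together with a partition of unity on $X$ to replace $G_1\to X$ and $G_2\to X$ by constant bundles $G_1^{(0)}\times X$ and $G_2^{(0)}\times X$, with $\phi$ induced by a fixed surjective Carnot homomorphism $\phi^{(0)}\colon G_1^{(0)}\to G_2^{(0)}$. Since $S^m$ of a constant bundle is, morally, the $C^\infty(X)$-module of smooth families of symbols on the fibre, the bundle statement follows from the fibrewise one once every choice is made smoothly in $x$.

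For a single Carnot group the Fourier-side construction goes as follows. Given $v\in S^m(G_2^{(0)})$, the Euclidean Fourier transform $\hat v$ is smooth on $\mathfrak g_2^{(0)*}$, of polynomial growth, and homogeneous of degree $m$ at infinity modulo Schwartz (\cref{Fourier transform}). Fix a grade-preserving splitting $\mathfrak g_1^{(0)*}=\phi^{(0)T}(\mathfrak g_2^{(0)*})\oplus W$ and write $\xi=(\xi_2,w)$. I would first build a smooth function $h$ on $\mathfrak g_1^{(0)*}\setminus 0$ which is exactly homogeneous of degree $m$ and whose restriction to $\phi^{(0)T}(\mathfrak g_2^{(0)*})\setminus 0$ is the homogeneous representative of $\hat v$, by extending the corresponding function on the unit sphere using a partition of unity on the unit sphere of $\mathfrak g_1^{(0)*}$. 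Then take smooth cutoffs $\rho\in C^\infty(\mathfrak g_1^{(0)*})$ vanishing near $0$ and equal to $1$ outside, and $\chi\in C_c^\infty(W)$ with $\chi(0)=1$, and define
\[
\hat u(\xi_2,w):=\rho(\xi_2,w)\,h(\xi_2,w)+(1-\rho(\xi_2,w))\,\hat v(\xi_2)\,\chi(w).
\]
This $\hat u$ is smooth, of polynomial growth, homogeneous of degree $m$ at infinity modulo Schwartz, and agrees with $\hat v$ along $w=0$ up to a Schwartz correction. Its inverse Fourier transform $u_0$ is a tempered distribution smooth off the identity; multiplying by $\eta\in C_c^\infty(G_1^{(0)})$ equal to $1$ on a neighborhood of $e$ yields $\tilde u=u_0\eta\in S^m(G_1^{(0)})$, the key point in verifying $\alpha_\lambda^*\tilde u-\lambda^m\tilde u\in C_c^\infty$ being that $\alpha_\lambda^*\eta-\eta$ vanishes on a neighborhood of $e$, so that its product with the singular part of $u_0$ is smooth. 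One then has $\phi^{(0)}_*\tilde u=v+r$ with $r\in C_c^\infty(G_2^{(0)})$, and the residual $r$ is absorbed using the trivial surjectivity of $\phi^{(0)}_*\colon C_c^\infty(G_1^{(0)})\to C_c^\infty(G_2^{(0)})$ (obtained by tensoring any preimage with a bump on $\ker\phi^{(0)}$ of integral one).

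The main difficulty is in producing the extension $\hat u$ with the correct asymptotic behaviour: a naive tensor-product extension $\hat v(\xi_2)\chi(w)$ fails because $\hat v$ itself is not Schwartz, so its multiplication by a non-dilation-invariant cutoff in $w$ yields a residual $\hat u\circ\alpha_\lambda-\lambda^m\hat u$ which grows in the $\xi_2$-direction. One must therefore interpolate with a genuinely homogeneous representative $h$ to cancel the growing part, and the technical heart of the argument is to check that this interpolation indeed produces a residual that is Schwartz on all of $\mathfrak g_1^{(0)*}$ — this is where the precise structure of Fourier transforms of symbols on Carnot groups (\cref{Fourier transform}, developed in \cite{MR764508,MR1185817}) is essential.
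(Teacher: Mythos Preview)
Your Fourier--side extension argument is sound and does prove the lemma; it is, however, a completely different route from the paper's. The paper gives a one--line construction: pick $f\in C^\infty_c(G_1)$ with $f=1$ near the zero section and $\phi_*(f)=1$ on $\supp(u)$, and declare the preimage to be $f\,\phi^*u$, where $\phi^*u$ is the distributional pullback along the submersion $\phi$; then $\phi_*(f\phi^*u)=u$ by the projection formula. What your longer approach buys is that it actually lands in $S^m(G_1)$: the paper's candidate $f\phi^*u$ has singular support along all of $\ker\phi$, not just the zero section, so the defect $\alpha_\lambda^*(f\phi^*u)-\lambda^m f\phi^*u$ is \emph{not} in $C^\infty_c(G_1)$ in general and hence $f\phi^*u\notin S^m(G_1)$. (Take $G_1=\R^2$, $G_2=\R$, $\phi(x,y)=x$, $u=\delta_0'$ to see this explicitly.) Your careful extension of the homogeneous principal part from $\phi^T(\mathfrak g_2^*)$ to all of $\mathfrak g_1^*$ is exactly what is needed to repair this.

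Two corrections to your write--up. First, do not invoke \cref{lift Rock} for the reduction step: that lemma opens with ``let $\tilde u$ be any lift of $u$ to $S^m(G_1)$'', i.e.\ it already \emph{uses} the surjectivity you are proving, so your reduction as written is circular. The reduction is in any case much simpler: the definition of $S^m(G)$ involves only the dilations $\alpha_\lambda$, hence only the graded vector--bundle structure of $\mathfrak g$ and not the Lie bracket; graded vector bundles and graded surjections between them are locally trivial, so a partition of unity on $X$ reduces directly to the constant--fibre case. Second, the fibrewise argument can be streamlined. By \cref{Fourier transform} the quotient $S^m(G)/C^\infty_c(G)$ identifies with smooth degree--$m$ homogeneous functions on $\mathfrak g^*\setminus 0$, i.e.\ with smooth sections over the weighted sphere $(\mathfrak g^*\setminus 0)/\R_+^*$. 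Surjectivity modulo $C^\infty_c$ is then just extension of smooth functions from the closed submanifold $\phi^T(\mathfrak g_2^*\setminus 0)/\R_+^*$, and the residual $C^\infty_c$ error is absorbed by the elementary surjectivity of fibre integration $\phi_*\colon C^\infty_c(G_1)\to C^\infty_c(G_2)$, exactly as you say.
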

\begin{proof}
Let $f\in C^\infty_c(G_1)$ such that $f=1$ on a neighbourhood of $0$ and $\phi_*(f)=1\in C^\infty_c(G_2)$. If $u\in S^m(G_2)$, then $f\phi^*u\in S^m(G_1)$ and its image by $\phi_*$ is equal to $u$.
\end{proof}
\begin{dfn}A symbol $u$ is said to satisfy Rockland condition at a point $x\in X$,  if there exists a compact $K\subseteq \hat{G}_x$ of the dual space, such that for every $\pi\notin K$, $\pi(Op(u))$ and $\pi(Op(u)^*)$ are injective. A symbol satisfy the Rockland condition if it does at every point $x\in X$.
\end{dfn}
For differential operators Rockland condition is the same as the one defined in \cref{Rockl sect}, because differential operators are equivariant, not just equivariant up to smooth functions.
\begin{theorem}\label{Parametrix symbols}If $u\in S^{k}(G)$ is a symbol, then the following are equivalent \begin{enumerate}
\item $u$ satisfies Rockland condition
\item there exists $l,r\in S^{-k}(G)$ such that  $1-l\star u,1-u\star r\in S^{-1}(G).$
\end{enumerate}
\end{theorem}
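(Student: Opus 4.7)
My plan is to prove the two implications separately, with the easier direction (2)$\Rightarrow$(1) handled directly and (1)$\Rightarrow$(2) reduced to the classical (constant fiber) case from \cite{MR1185817}.

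For (2)$\Rightarrow$(1): Fix $x\in X$ and suppose $s:=1-l\star u\in S^{-1}(G)$. Applying an irreducible unitary representation $\pi$ of $G_x$ gives $\pi(s)=1-\pi(l)\pi(u)$ on a suitable domain. By Dixmier's theorem (\cite[13.11.12]{MR0458185}), the map $\pi\mapsto \pi(s)$ takes values in compact operators, and its norm tends to $0$ as $\pi$ exits compact subsets of $\hat G_x$ (this is the standard Rockland-type estimate for negative order symbols; one can argue via \cref{Fourier transform} plus the fact that the symbol of order $-1$ is a Schwartz perturbation of a $(-1)$-homogeneous function on $\mathfrak{g}^*$, to which one applies the scaling automorphisms of $\hat G_x$). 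Hence outside a compact $K\subseteq \hat G_x$, $\pi(l)\pi(u)$ is close to the identity, so $\pi(u)$ is injective. The analogous argument with $r$ handles $\pi(Op(u)^\ast)$, giving the Rockland condition.

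For (1)$\Rightarrow$(2): The fiberwise statement—namely, for a fixed Carnot group $H$ and a symbol $v\in S^k(H)$ satisfying Rockland, there exists $l_0,r_0\in S^{-k}(H)$ with $1-l_0\star v,1-v\star r_0\in S^{-\infty}(H)$—is precisely the classical theorem of Christ--Geller--G\l{}owacki--Polin \cite{MR1185817}. The task is to upgrade this to a family, since the bundle $G\to X$ is not assumed locally trivial. The plan is to lift: choose, locally on $X$, a bundle of Carnot groups $\tilde G\to U$ which is \emph{constant} (e.g.\ take $\tilde{\mathfrak g}$ to be a free nilpotent Lie algebra of the appropriate step and rank with a fixed generating set) together with a surjective bundle homomorphism $\phi:\tilde G\to G_{|U}$ of graded Lie groups. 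Using \cref{surjectivity symbols}, pick $\tilde u\in S^k(\tilde G)$ with $\phi_*\tilde u=u_{|U}$. The lifted symbol $\tilde u$ still satisfies the Rockland condition at every point (this is the content of the preparatory lemma the author labels ``lift Rock'': irreducible representations of $G_x$ pulled back via $\phi_x$ give irreducible representations of $\tilde G_x$, and the remaining representations of $\tilde G_x$ factor through strict quotients where the symbol can be arranged to behave well). Applying the constant-fiber result from \cite{MR1185817} in its parametric (smooth in $x$) form gives $\tilde l,\tilde r\in S^{-k}(\tilde G)$ with $1-\tilde l\star\tilde u,\;1-\tilde u\star\tilde r\in S^{-1}(\tilde G)$; pushing forward by $\phi_*$ and using that $\phi_*$ is an algebra homomorphism preserving orders yields a local parametrix for $u$. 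A standard partition of unity glues these local parametrices into global ones, since the error of order $-1$ is an ideal property stable under such gluing.

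The main obstacle will be the reduction step: making precise the smooth dependence of the parametrix on $x$ inside the locally constant bundle $\tilde G$, and verifying that the pullback of the Rockland condition under $\phi$ is as I described. Once that is established, the pointwise theorem of \cite{MR1185817} plus the functoriality package \cref{surjectivity symbols} together with the algebra structure on $S^*(G)$ do the rest mechanically.
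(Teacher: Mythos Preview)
Your overall strategy matches the paper's: reduce to the constant-bundle case of \cite[Theorem~2.5]{MR1185817} by surjecting a trivial bundle of free nilpotent groups onto $G$, lift the symbol, apply the known result upstairs, and push the parametrix down via the algebra homomorphism $\phi_*$. The direction (2)$\Rightarrow$(1) is also handled essentially as you say.

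However, your description of the lifting step contains a genuine gap. You assert that a lift $\tilde u\in S^k(\tilde G)$ of $u$ ``still satisfies the Rockland condition at every point,'' justifying this by saying that the irreducible representations of $\tilde G_x$ not coming from $G_x$ ``factor through strict quotients where the symbol can be arranged to behave well.'' This is not correct: the irreducible representations of $\tilde G_x$ that do \emph{not} factor through $G_x$ are exactly those nontrivial on $\ker(\phi_x)$, and there is no quotient structure controlling $\pi(\tilde u)$ for such $\pi$. For the naive lift $\tilde u=f\cdot\phi^*u$ of \cref{surjectivity symbols}, $\pi(\tilde u)$ depends on the arbitrary choice of cutoff $f$ and will typically fail to be injective.

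The paper's fix, which is the actual content of \cref{lift Rock}, is different and is the key idea you are missing: one does \emph{not} lift $u$ itself, but rather $u^*\star u$. Taking any lift $\tilde u$ and any symbol $k\in S^k(\ker\phi)$ satisfying Rockland on the fibres of $\ker\phi$, the symbol $v:=\tilde u^*\star\tilde u + k^*\star k\in S^{2k}(\tilde G)$ satisfies $\phi_*(v)=u^*\star u$ and is Rockland: for any nontrivial irreducible $\pi$ of $\tilde G_x$, either $\pi$ factors through $G_x$ (then $\pi(\tilde u^*\star\tilde u)$ is injective) or $\pi$ is nontrivial on $\ker(\phi_x)$ (then $\pi(k^*\star k)$ is injective), and in either case the sum of the two nonnegative operators is injective. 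Applying \cite{MR1185817} to $v$ on the constant bundle $\tilde G$ yields a parametrix $l'\in S^{-2k}(\tilde G)$ with $l'\star v-1\in S^{-1}$; pushing forward gives $\phi_*(l')\star(u^*\star u)-1\in S^{-1}(G)$, so $l:=\phi_*(l')\star u^*\in S^{-k}(G)$ is a left parametrix for $u$. The right parametrix is obtained by applying the same argument to $u^*$ (equivalently to $u\star u^*$). Note also that the paper works globally (by compactness of $X$ one chooses finitely many global generators), so no partition-of-unity gluing is needed.
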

\begin{proof}
If the bundle is constant, this is \cite[Theorem 2.5]{MR1185817}. A proof for the general case is given in \cite{Dave:aa,Dave:ab}. A different proof is as follows : given $n\in \N^+$ and $a_1,\cdots,a_n\in \N$, one can construct the free $n$-step Carnot group generated by variables $X^1_1,\cdots,X^1_{a_1},\cdots,X^n_1,\dots,X^n_{a_n}$ where $X^i_j$ has degree $i$. By compactness of $X$, one can choose a finite number of generators $X^i_j\in \Gamma(\mathfrak{g}_i)$. Let $\mathbb{G}$ be the constant bundle over $X$ of free Carnot groups with variables $X_j^i$. By construction one has a submersion $\phi:\mathbb{G}\to G.$ The theorem follows from \cref{lift Rock} applied to $u$ and $u^*$ together with the results of \cite{MR1185817}.\qedhere
\begin{lem}\label{lift Rock}Let $\phi:G_1\to G_2$ be a submersion. If $u\in S^m(G_2)$ satisfies the Rockland condition, then there exists $v\in S^{2m}(G_1)$ such that $\phi_*(v)=u^*\star u.$
\end{lem}
\begin{proof}
Let $\tilde{u}$ be any lift of $u$ to $S^m(G_1)$ and $k\in S^{m}(\ker (\phi))$ an operator that satisfy the Rockland condition (for the bundle $\ker(\phi)$), then $\tilde{u}^*\star\tilde{u}+k^*\star k$ is a lift of $u$ which satisfies the Rockland condition.
\end{proof}
\end{proof}
\begin{theorem}\label{symbols bounded}\begin{enumerate}
\item If $m\leq 0$, then $\overline{Op(u)}\in \mathcal{L}(C^*G).$
\item If $m<0$, then $\overline{Op(u)}\in \mathcal{K}(C^*G)=C^*G$.
\item If $m>0$, and $u$ satisfy the Rockland condition, then $\overline{Op(u)}$ is a regular operator and $\overline{Op(u)}^*=\overline{Op(u^*)}.$
\end{enumerate}
\end{theorem}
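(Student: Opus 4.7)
The strategy is to reduce all three parts to the known case of a single (constant) Carnot group, where they are classical results found in \cite{MR1185817}, and then pass to bundles using the $C_0(X)$-algebra structure of $C^*G$ together with the lifting device of \cref{lift Rock}. Smoothness of the symbol in the base variable makes the fiberwise bounds assemble continuously into bounds on the total $C^*$-algebra.

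For part 2 I would first handle the case where $-m$ exceeds the homogeneous dimension of $\mathfrak{g}$. Then \cref{Fourier transform} combined with the homogeneity relation $\alpha_\lambda^* u - \lambda^m u \in C_c^\infty(G)$ forces $u$ to be represented by a continuous family $x \mapsto u_x$ of compactly supported $L^1$-functions on $G_x$. Such an element belongs to the continuous convolution algebra that embeds into $\mathcal{K}(C^*G) = C^*G$. For general $m < 0$, I reduce to this case by iterating convolutions: since $u^{\star k} \in S^{km}(G)$, taking $k$ large yields $\overline{Op(u)}^k = \overline{Op(u^{\star k})} \in C^*G$. Granting part 1, functional calculus applied to the positive bounded operator $\overline{Op(u)}\,\overline{Op(u^*)}$, a sufficiently high power of which is compact, then gives compactness of $\overline{Op(u)}$ itself.

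For part 1 the only nontrivial case is $m = 0$; here I would use the $T^*T$-trick in the Carnot setting. By compactness of $X$ and \cref{Fourier transform}, there exists a uniform constant $C > 0$ such that the symbol $C\delta_e - u^* \star u \in S^0(G)$ satisfies the Rockland condition fiberwise. \Cref{Parametrix symbols} then furnishes a parametrix, from which a square root in the symbol algebra modulo $S^{-1}(G)$ can be extracted; part 2 absorbs the smoothing error and delivers the operator norm estimate $\|\overline{Op(u)}\|^2 \leq C$. For part 3, I apply \cref{Parametrix symbols} to obtain $l, r \in S^{-k}(G)$ with $1 - l\star u$ and $1 - u\star r$ in $S^{-1}(G)$; by part 2, both error terms are compact on $C^*G$. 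The identity $Op(u)(f\star g) = Op(u)(f)\star g$ immediately gives $Op(u^*) \subseteq Op(u)^*$ on $C_c^\infty(G)$, and the parametrix upgrades this inclusion to equality of closures. Regularity in the Baaj--Julg sense follows because $u^*\star u \in S^{2k}(G)$ again satisfies the Rockland condition (composition of Rockland symbols), so $1 + u^*\star u$ admits a parametrix whose action, modulo compacts, is a right inverse of $1 + \overline{Op(u)}^*\overline{Op(u)}$, yielding density of its range.

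The main obstacle I expect is the uniformity of the constant $C$ in the $T^*T$-trick of part 1 and the verification that $u^* \star u$ inherits the Rockland condition from $u$. Both rely on compactness of $X$ together with the fiberwise statements of \cite{MR1185817} transferred to our setting via \cref{lift Rock}: lifting $u^* \star u$ to a constant bundle of free Carnot groups allows us to apply the classical results directly, and then push back down by $\phi_*$ to recover the assertions on $G$.
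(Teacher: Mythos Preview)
Your treatment of parts 2 and 3 is essentially the paper's argument: for part 2 the paper also passes to $u^*\star u$, takes a high convolution power so that the Euclidean Fourier transform becomes integrable (hence lies in $C^*G$), and then descends to $u$; for part 3 the paper simply cites the standard parametrix argument for regularity, which is exactly what you sketch.

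The gap is in part 1. Your $T^*T$-argument hinges on the claim that for some $C>0$ the symbol $C\delta_e - u^*\star u$ satisfies the Rockland condition, and you justify this by ``compactness of $X$ and \cref{Fourier transform}''. But \cref{Fourier transform} concerns the \emph{Euclidean} Fourier transform on $\mathfrak g$, not the group Fourier transform; boundedness of $F(u^*\star u)$ says nothing about $\pi(Op(u^*\star u))$ when $G$ is nonabelian. Unwinding what Rockland means here, you need $C\cdot\mathrm{Id} - \pi(Op(u))^*\pi(Op(u))$ to be injective for every nontrivial irreducible $\pi$, i.e.\ $\sup_\pi \|\pi(Op(u))\| < \infty$. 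That supremum is exactly the $\mathcal L(C^*G)$-norm of $\overline{Op(u)}$, so you are assuming what you want to prove. The paper avoids this by citing \cite[Theorem 5.3]{MR739894} directly for the $L^2$-boundedness of order-$0$ convolution operators on homogeneous groups; that result is proved by Cotlar--Stein techniques, not by a symbolic square root, and there is no shortcut around it.

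There is also a smaller circularity you should be aware of: your part 1 invokes part 2 to absorb the $S^{-1}(G)$ error, while your general part 2 explicitly ``grants part 1''. This can be repaired by iterating the approximate square root so that the error lands in $S^{-N}(G)$ for $N$ large enough to fall into the directly-established $L^1$ regime, but as written the two parts lean on each other.
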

\begin{proof}
 By \cite[Theorem 5.3]{MR739894}, the operators $Op(u)$ and $Op(u^*)$ are bounded. For $m<0$, then it is enough to show that $Op(u^*\star u)\in S^{2m}(G)$ is compact. For $k$ big enough then the Euclidean Fourier transform of $Op(u^*\star u)^k$ is integrable (see \cite[proposition 1.9]{MR764508}), hence in $C^*G$. It follows that $Op(u^*\star u)$ is compact as well. For $m>0$, the proof is the same as the proof for classical pseudo-differential operators given \cite[proposition 3.6.2]{MR2227132}.
\end{proof}
\begin{prop}Let $u\in S^k(G)$ be a symbol. The set of all $x\in M$ such that $u$ satisfy the Rockland condition at $x$ is an open set.
\end{prop}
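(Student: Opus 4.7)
The plan is to invoke the parametrix characterization of Theorem \ref{Parametrix symbols}: the Rockland condition is equivalent to the existence of a parametrix modulo $S^{-1}$. To prove openness, I would suppose $u \in S^k(G)$ satisfies the Rockland condition at a point $x_0 \in M$, and aim to construct $l, r \in S^{-k}(G|_U)$ with $1 - l \star u, 1 - u \star r \in S^{-1}(G|_U)$ for some neighborhood $U$ of $x_0$. Once this is done, Theorem \ref{Parametrix symbols} applied to $u|_U$ (viewed as a symbol on the bundle $G|_U \to U$) yields the Rockland condition throughout $U$.

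First I would apply Theorem \ref{Parametrix symbols} to the trivial bundle $G_{x_0} \to \{x_0\}$: the fiberwise Rockland condition at $x_0$ supplies $l_0, r_0 \in S^{-k}(G_{x_0})$ such that $1 - l_0 \star u_{x_0}, 1 - u_{x_0} \star r_0 \in S^{-1}(G_{x_0})$. Next I would extend $l_0, r_0$ to symbols $l, r \in S^{-k}(G|_U)$ for some neighborhood $U$ of $x_0$. Such an extension exists because, although the Lie bracket on $\mathfrak{g}$ is not locally constant, the underlying graded vector bundle $\mathfrak{g} = \bigoplus \mathfrak{g}_i$ is, so after trivializing $\mathfrak{g}|_U$ as a graded vector bundle one can take the distributional kernels of $l_0, r_0$, fiber them smoothly over $U$ with the prescribed homogeneity, and obtain global symbols of order $-k$ on $G|_U$ restricting to $l_0, r_0$ at $x_0$.

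The main difficulty is that the defects $1 - l \star u$ and $1 - u \star r$, which at $x_0$ lie in $S^{-1}(G_{x_0})$, a priori only lie in $S^0(G|_U)$, so their order may fail to drop over all of $U$. I would handle this by an iterative Borel-type procedure in the symbolic calculus: at each step, use the fiberwise Rockland condition at $x_0$ to solve for a lower-order correction to $l$ (respectively $r$) that cancels the top-order part of the current defect, then take an asymptotic sum; continuity of the symbolic calculus ensures that after shrinking $U$ the corrected symbol is a genuine parametrix on $U$.

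A more concise alternative route, which sidesteps the Borel construction, is to use the submersion $\phi : \mathbb{G} \to G$ from a constant bundle of free Carnot groups already used in the proof of Theorem \ref{Parametrix symbols}. Via Lemma \ref{lift Rock} one lifts $u^* \star u$ to a symbol on $\mathbb{G}$ whose Rockland locus projects onto that of $u$ (using the equivalence of the Rockland locus of $u$ with that of $u^* \star u$ and $u \star u^*$). In the constant bundle case, the Lie group structure is fixed over the base and openness reduces, by a direct continuity-and-perturbation argument for symbols on a fixed Carnot group, to results already in \cite{MR1185817,MR739894}; pushing forward then gives openness for $u$.
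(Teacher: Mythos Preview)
Your alternative route---lifting via Lemma \ref{lift Rock} to a constant free Carnot bundle $\mathbb{G}$ and then invoking \cite[Theorem 2.5]{MR1185817}---is exactly the paper's proof, and it is correct. The pointwise nature of the lift $v=\tilde{u}^*\star\tilde{u}+k^*\star k$ guarantees that the Rockland locus of $v$ on $\mathbb{G}$ coincides with that of $u$ on $G$, so openness on the constant bundle pulls back.

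Your first approach, however, has a genuine gap. After extending $l_0$ to $l$ over $U$, the defect $R=1-l\star u$ lies in $S^0(G|_U)$ and has $R_{x_0}\in S^{-1}(G_{x_0})$, as you say. But the Borel-type iteration you describe cannot run: to cancel the top-order (degree-$0$) homogeneous part of $R$ at a point $x\in U$ you must solve $\sigma_{-k}(\delta l)_x\cdot\sigma_k(u)_x=\sigma_0(R)_x$, and this requires the principal symbol $\sigma_k(u)_x$ to be invertible---which is precisely the Rockland condition at $x$ that you are trying to prove. Using only Rockland at the single point $x_0$ you can improve the defect to lie in $S^{-\infty}$ \emph{at} $x_0$, but you cannot force it into $S^{-1}$ on a whole neighbourhood; the order of a symbol is a discrete invariant and does not drop by continuity. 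This circularity is exactly why the paper (and \cite{MR1185817,Dave:aa}) reduce to a constant bundle first: once the group structure is fixed over the base, the parametrix construction and the perturbation argument can be made uniform in $x$, something that has no obvious analogue when the Lie bracket varies.
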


\begin{proof}This is a direct corollary of \cref{lift Rock} applied to $D$ and $D^t$ together with \cite[Theorem 2.5]{MR1185817}
\end{proof}

\begin{prop}
Let $u\in S^k(G)$ be a symbol that satisfies Rockland condition with $k>0$, then
$$Op(u)(1+Op(u^*)Op(u))^{-\frac{1}{2}}$$ is a Fredholm operator in the sense of Kasparov $C^*$-modules.
\end{prop}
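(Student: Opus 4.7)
The plan is to run a parametrix argument on the Hilbert $C^*G$-module $C^*G$, in the spirit of \cref{Rockland equiv}. Set $T := \overline{Op(u)}$. By item~3 of \cref{symbols bounded}, $T$ is regular with adjoint $T^* = \overline{Op(u^*)}$, so the bounded transform $F := T(1+T^*T)^{-1/2}$ defines an element of $\mathcal{L}(C^*G)$, and a direct functional-calculus computation yields
\[
1 - F^*F = (1+T^*T)^{-1}, \qquad 1 - FF^* = (1+TT^*)^{-1}.
\]
It therefore suffices to show that these two resolvents lie in $\mathcal{K}(C^*G) = C^*G$. By the $u \leftrightarrow u^*$ symmetry of the Rockland condition, it is enough to treat the first resolvent.

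To produce a parametrix, I would consider $w := 1 + u^* \star u \in S^{2k}(G)$. For every nontrivial irreducible representation $\pi$ of $G_x$, one has $\pi(Op(w)) = 1 + \pi(Op(u^*))\pi(Op(u)) \geq 1$ on its domain, hence $\pi(Op(w))$ is injective; because $w = w^*$, the same holds for the formal adjoint, so $w$ satisfies the Rockland condition. \Cref{Parametrix symbols} then furnishes $q \in S^{-2k}(G)$ with $s := q \star w - 1 \in S^{-1}(G)$. Since the orders $-2k$ and $-1$ are strictly negative, item~2 of \cref{symbols bounded} places $\overline{Op(q)}$ and $\overline{Op(s)}$ in $\mathcal{K}(C^*G)$.

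Writing $a := (1+T^*T)^{-1}$, the key step is to establish the operator identity
\[
a = \overline{Op(q)} - \overline{Op(s)} \, a
\]
in $\mathcal{L}(C^*G)$. For $\xi \in C^\infty_c(G)$, $a\xi \in \mathrm{Dom}(T^*T)$ and $\overline{Op(w)}(a\xi) = \xi$; hypoellipticity of $Op(w)$ (a consequence of its Rockland property) guarantees that $a\xi$ is smooth, so the symbolic identity $Op(q) \circ Op(w) = 1 + Op(s)$ may be applied pointwise to $a\xi$ and gives $\overline{Op(q)}\xi = a\xi + \overline{Op(s)}\,a\xi$; density of $C^\infty_c(G)$ in $C^*G$ extends this to the displayed identity. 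Both $\overline{Op(q)}$ and $\overline{Op(s)} \, a$ are compact (the latter being a compact operator composed with a bounded one), so $a$ is compact. Running the same argument with $1 + u \star u^*$ in place of $w$ handles $(1+TT^*)^{-1}$ and completes the proof.

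The principal obstacle is the analytic justification of the identity $a = \overline{Op(q)} - \overline{Op(s)}\,a$: one must know that for smooth compactly supported $\xi$, the vector $a\xi$ has enough regularity for the composition $Op(q) \circ Op(w)$ to be legitimately applied to it. This ultimately rests on the hypoellipticity of Rockland operators, a standard ingredient of the pseudodifferential calculus reviewed in the appendix, and is the only nontrivial input beyond formal parametrix manipulation.
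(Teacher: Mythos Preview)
Your proof is correct and matches the paper's approach: the paper's one-line proof (``direct consequence of \cref{symbols bounded} and \cref{Parametrix symbols}'') unpacks to exactly this resolvent-compactness argument, and the same computation is written out in detail in the proof of \cref{compact resolvent}.

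One remark: the appeal to hypoellipticity is unnecessary. If you take a \emph{right} parametrix $L\in S^{-2k}(G)$ with $w\star L-1\in S^{-1}(G)$, as the paper does in \cref{compact resolvent}, then for smooth $\xi$ the vector $Op(L)\xi$ is automatically smooth, so $(1+T^*T)\overline{Op(L)}=1+(\text{compact})$ holds on $C^\infty_c(G)$ and extends to all of $C^*G$ by closedness of $1+T^*T$; multiplying by $a$ finishes. Even with your left parametrix, the identity $\overline{Op(q)}\circ\overline{Op(w)}=1+\overline{Op(s)}$ on $\mathrm{Dom}(\overline{Op(w)})$ follows from density of $C^\infty_c(G)$ together with boundedness of $\overline{Op(q)}$ and $\overline{Op(s)}$, so no smoothness of $a\xi$ is required.
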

\begin{proof}This is a direct consequence of \cref{symbols bounded} and \cref{Parametrix symbols}.
\end{proof}
Let $V\to X$ be a Hermitian vector space. Clearly all the above generalises to $\End(V)$ valued symbols.

\begin{prop}\label{compact resolvent}
 Let $D\in \Gamma_k(\mathcal{U}(\mathfrak{g},V))$ be a differential operator satisfying the Rockland condition with $k>0$. It follows that $\left(1+\overline{D}^*\overline{D}\right)^{-1}\in C^*G.$ Hence for every $x\in X$, $\pi$ a non trivial unitary irreducible representation of $G_x$, $\pi(\overline{D})$ is invertible and its inverse is compact
\end{prop}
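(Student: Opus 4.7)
The plan is to build a left parametrix for the positive symbol $1 + D^t D$ using Theorem \ref{Parametrix symbols} and show that it agrees with $(1 + \overline{D}^*\overline{D})^{-1}$ up to an element of $C^*G$. First, I would verify that $1 + D^t D \in \Gamma(\mathcal{U}_{2k}(\mathfrak{g}, V))$ satisfies the Rockland condition: for any non-trivial irreducible unitary representation $\pi$ of $G_x$ the operator $\pi(\overline{D})$ is injective by hypothesis, hence $\pi(1 + D^tD) = 1 + \pi(\overline{D})^*\pi(\overline{D})$ is bounded below by $1$, and the same holds for its formal adjoint, which is itself. Applying Theorem \ref{Parametrix symbols} produces symbols $S \in S^{-2k}(G)$ and $R \in S^{-1}(G)$ with $S \star (1 + D^t D) = 1 - R$.

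By Proposition \ref{Rockland equiv}, $\overline{D}^* = \overline{D^t}$ and $\overline{D}$ is regular, so $(1 + \overline{D}^*\overline{D})^{-1}$ exists as a bounded multiplier of $C^*G$. Promoting the parametrix identity to the level of $C^*$-module operators and multiplying on the right by this multiplier yields
$$\overline{Op(S)} = (1 + \overline{D}^*\overline{D})^{-1} - \overline{Op(R)}\,(1 + \overline{D}^*\overline{D})^{-1}.$$
By Theorem \ref{symbols bounded}.(2) both $\overline{Op(S)}$ and $\overline{Op(R)}$ lie in $C^*G$, and since $C^*G$ is an ideal in its multiplier algebra the term $\overline{Op(R)}(1 + \overline{D}^*\overline{D})^{-1}$ is as well. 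Hence $(1 + \overline{D}^*\overline{D})^{-1} \in C^*G$, which proves the first claim.

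For the second claim I would fix $x\in X$ and a non-trivial irreducible unitary representation $\pi$ of $G_x$. By Dixmier's theorem (cited in the proof of Proposition \ref{diff is compact}), $\pi$ carries $C^*G_x$ into $\mathcal{K}(H_\pi)$, so $(1 + \pi(\overline{D})^*\pi(\overline{D}))^{-1}$ is compact. Injectivity of $\pi(\overline{D})$ coming from Rockland then forces $\pi(\overline{D})^*\pi(\overline{D})$ to be invertible with compact inverse (discrete spectrum bounded away from $0$). Running the same argument with $D^t$ in place of $D$ yields the analogous statement for $\pi(\overline{D})\pi(\overline{D})^*$, and the two facts together give that $\pi(\overline{D})$ itself is invertible with compact inverse, via the identity $|\pi(\overline{D})^{-1}|^2 = (\pi(\overline{D})\pi(\overline{D})^*)^{-1}$ and polar decomposition.

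The main obstacle I anticipate is not the algebra itself but the bookkeeping required to pass from the symbol-level parametrix identity $S \star (1 + D^tD) = 1 - R$ to a genuine operator identity at the level of regular $C^*$-module operators, and to justify that multiplying an element of $C^*G$ by the unbounded multiplier $(1 + \overline{D}^*\overline{D})^{-1}$ lands back in $C^*G$. Both are consequences of the regularity of $\overline{D}$ granted by Proposition \ref{Rockland equiv} together with the ideal property of a $C^*$-algebra inside its multiplier algebra, but they are where the real content lives.
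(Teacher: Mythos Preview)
Your approach is essentially the paper's, and the overall logic is sound, but there is one technical slip: $1+D^tD$ is \emph{not} in $\Gamma(\mathcal{U}_{2k}(\mathfrak{g},V))$, nor in $S^{2k}(G)$, because the constant term $1$ spoils the homogeneity condition $\alpha_\lambda^*u-\lambda^{2k}u\in C^\infty_c(G)$ (you get a nonzero multiple of the delta at the origin, which is not smooth). So Theorem~\ref{Parametrix symbols} does not apply to $1+D^tD$ as written.

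The paper sidesteps this by applying the parametrix theorem to the homogeneous symbol $D^t\star D\in S^{2k}(G)$ instead: it takes $L\in S^{-2k}(G)$ with $D^t\star D\star L-1=R\in S^{-1}(G)$, and then expands $(1+\overline{D}^*\overline{D})^{-1}$ as
\[
(1+\overline{D}^*\overline{D})^{-1}=\overline{Op(L)}-(1+\overline{D}^*\overline{D})^{-1}\overline{Op(R)}-(1+\overline{D}^*\overline{D})^{-1}\overline{Op(L)},
\]
picking up one extra compact term compared to your formula. After that your ideal argument goes through verbatim. Your treatment of the second claim via compact resolvent, discreteness of spectrum, and polar decomposition is correct and equivalent to the paper's Fredholm argument.
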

\begin{proof}
Let $L\in S^{-2k}(G)$ be such that $D^t\star D\star L -1=R\in S^{-1}(G).$ It follows that\begin{align*}
 (1+\overline{D}^*\overline{D})^{-1}=\overline{Op(L)}- (1+\overline{D}^*\overline{D})^{-1} \overline{Op(R)}\\-(1+\overline{D}^*\overline{D})^{-1}\overline{Op(L)}. 
\end{align*}
The first part follows from \cref{symbols bounded}. The Rockland condition says that $\pi(\overline{D})$ is injective and has dense image. It suffices then to show that $\pi(\overline{D}(1+\overline{D^tD})^{-\frac{1}{2}})$ is Fredholm. This is a  direct consequence of the first part of \ref{compact resolvent} and a well known result of Dixmier that the $C^*$-algebra of nilpotent Lie groups is mapped into compact operators by irreducible representations, see \cite[13.11.12]{MR0458185}. The compactness of $\pi(\overline{D})^{-1}$ is obvious.
\end{proof}
 \bibliography{../Biblatex}
\bibliographystyle{alpha}

\end{document}